\documentclass[11pt, reqno]{amsart}
\title{Gaussian limits of lattice Higgs models with complete symmetry breaking}
\date{}

\author[Frederick Rajasekaran]{Frederick Rajasekaran}
\author[Oren Yakir]{Oren Yakir}
\author[Yanxin Zhou]{Yanxin Zhou}
\address{Frederick Rajasekaran \newline Department of Mathematics, Stanford University, USA.}
\email{fredr@stanford.edu \vspace{0.2cm}}
\address{Oren Yakir \newline Department of Mathematics, Massachusetts Institute of Technology, USA.}
\email{oren.yakir@gmail.com \vspace{0.2cm}}
\address{Yanxin Zhou \newline Department of Statistics, Stanford University, USA.}
\email{yanxin01@stanford.edu}
    
\usepackage{amsmath,amsfonts,amssymb,amsthm,graphics,enumerate,mathtools,float,tikz}
\usepackage{hyperref,dsfont}
\hypersetup{
    colorlinks=true,
    linkcolor=blue,
    citecolor=red,
    urlcolor=blue,
    pdfborder={0 0 0}
}

\usepackage{graphicx,xcolor,bbm,fullpage}
\usepackage{geometry}
\newgeometry{vmargin={31 mm}, hmargin={26mm,26mm}}
\usepackage{booktabs}

\newtheorem{thm}{Theorem}
\newtheorem{claim}[thm]{Claim}
\newtheorem{lemma}[thm]{Lemma}

\newtheorem{proposition}[thm]{Proposition}

\newtheorem{definition}{Definition}

\numberwithin{equation}{section}

\theoremstyle{definition}
\newtheorem{remark}[thm]{Remark}

\def\cX{\mathcal{X}}

\def\cP{\mathcal{P}}

\def\cE{\mathcal{E}}

\def\cA{\mathcal{A}}

\def\eps{{\varepsilon}}

\newcommand{\bR}{\mathbb R}

\newcommand{\bZ}{\mathbb Z}

\newcommand{\bT}{\mathbb T}
\newcommand{\bN}{\mathbb N}

\newcommand{\bP}{\mathbb P}
\newcommand{\bS}{\mathbb S}
\newcommand{\bE}{\mathbb E}

\DeclareMathOperator{\cov}{Cov}

\DeclareMathOperator{\Tr}{Tr}

\newcommand{\Var}{{\sf Var}}
\newcommand{\Log}{{\sf Log}}

\begin{document}

\maketitle

\begin{abstract}
Given any compact connected matrix Lie group $G$ and any lattice dimension $d\ge 2$, we construct a massive Gaussian scaling limit for the $G$-valued lattice Yang-Mills-Higgs theory in the ``complete breakdown of symmetry" regime. This limit arises as the lattice spacing tends to zero and the (inverse) gauge coupling constant tends to infinity sufficiently fast, causing the theory to ``abelianize" and yield a Gaussian limit. This complements a recent work by Chatterjee~\cite{Chatterjee-HiggsMechanism}, 
% when uploading to the arXiv, make sure to remove the cite command
which obtained a similar scaling limit in the special case $G= SU(2)$.
\vspace{1em}

\noindent \textit{Key words and phrases.} Higgs mechanism, Yang--Mills theory, mass gap.

\noindent \textit{2020 Mathematics Subject Classification.} 70S15, 81T13, 81T25, 82B20.
\end{abstract}

\section{Introduction}
\noindent
One of the big open problems in mathematical physics is to rigorously construct non-Gaussian scaling limits of lattice gauge theories. This is the essence of the Yang-Mills existence and mass gap problem, one of the Clay Millennium Prize problems~\cite{Jaffe-Witten}. In this paper, we identify a regime in which the scaling limit is Gaussian. Specifically, we study Yang-Mills gauge fields coupled to a Higgs field with ``complete symmetry breaking". That is, when the Higgs field takes values in the gauge group and the field acts on it via the trivial representation; see Section~\ref{subsec:lattice_YMH_model} for the formal definition, and Section~\ref{subsec:complete_breakdown_of_symmetry} for additional background. In Theorem~\ref{thm:main_result} below, we show that as the lattice spacing tends to zero and the inverse gauge coupling diverges, a lifting of the lattice gauge field converges to the Proca field---a random, Lie-algebra valued, generalized 1-form which is both Gaussian and massive. A version of this result was recently obtained by Chatterjee~\cite{Chatterjee-HiggsMechanism} for the gauge group SU(2), and we show that the construction can be preformed, after some non-trivial modifications, with an arbitrary compact matrix Lie group acting as the gauge group of the theory. 

\subsection{The lattice Yang-Mills-Higgs model}
\label{subsec:lattice_YMH_model}
Let $G$ be a compact connected matrix Lie group, such that $G\subset U(N)$\footnote{Here, $U(N)$ is the group of $N\times N$ unitary matrices. } for some $N\ge 1$. Throughout, $G$ is referred to as the \textbf{gauge group} of the theory. We denote by $\mathfrak{g}$ the corresponding Lie algebra, that is, the tangent space at the identity element $I\in G$, and let $n = \text{dim}(G)$. For two $N\times N$ matrices $A,B$ with complex entries, we recall their Hilbert-Schmidt inner product is given by
\begin{equation}
    \label{eq:HS_inner_product}
    \langle A,B\rangle = \Tr(A B^\ast) \, , \qquad \| A \|^2 = \langle A,A \rangle \, .
\end{equation}
For $d\ge 2$ we set $\bT_{L}^d = (\bZ/L\bZ)^d$ to be the $d$-dimensional discrete torus with side length $L\ge 1$. We denote by $V,E,P$ the set of vertices, edges, and plaquettes in the underlying graph that we consider. In this paper, we study the \textbf{lattice Yang-Mills-Higgs model} in the \textbf{complete breakdown of symmetry regime}, as described in~\cite[Section~IV]{Fradkin-Shenker} or~\cite[Chapter~3]{Seiler-book}. This is a Gibbs measure with periodic boundary conditions for sampling a field $U = (U_e)_{e\in E(\bT_L^d)}$ taking values in the gauge group $G$, whose action is given by
\begin{equation} \label{eq:YMH_action_with_mass_m}
    \mathcal{H}(U) = \frac{1}{2} \sum_{p\in P(\bT_{L}^d)} \| I - ({\sf d}U)_p \|^2 + \frac{m}{2}\sum_{e\in E(\bT_L^d)} \| I - U_e\|^2 \, .
\end{equation}
Here, $m>0$ is a parameter called the \textbf{mass} and, for a plaquette $p$ bounded by four positively oriented edges $e_1,e_2,e_3,e_4$ (see Figure~\ref{figure:plaquette}), we have 
\[
({\sf d}U)_p  = U_{e_1}U_{e_2} U_{e_3}^{-1} U_{e_4}^{-1} \, .
\]
\begin{figure}
        \label{figure:plaquette}
		 	\begin{center}	\scalebox{0.8}{
            \begin{tikzpicture}[>=latex, thick]

            % Define coordinates for the corners of the square
            \coordinate (A) at (0,0);
            \coordinate (B) at (4,0);
            \coordinate (C) at (4,4);
            \coordinate (D) at (0,4);
            
            % Draw the directed edges
            \draw[->] (A) -- (B) node[midway, below=3pt] {$e_1$};
            \draw[->] (B) -- (C) node[midway, right=3pt] {$e_2$};
            \draw[->] (D) -- (C) node[midway, above=3pt] {$e_3$};
            \draw[->] (A) -- (D) node[midway, left=3pt] {$e_4$};

\end{tikzpicture}
            }
		 	\end{center}
            \caption{A plaquette bounded by the positively oriented edges $e_1,e_2,e_3,e_4$.}
\end{figure}
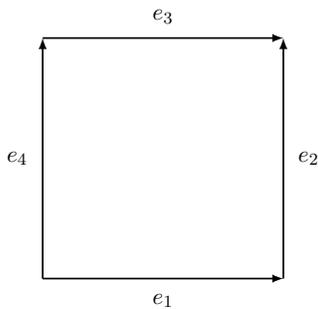
The Yang-Mills-Higgs lattice model is the probability measure $\nu_{\beta,m}^L$ given by
\begin{equation} \label{eq:def_of_YMH_measure_on_torus}
    {\rm d}\nu_{\beta,m}^L = \frac{1}{Z_{\beta,m,L}^{\sf YMH}}  e^{-\beta\mathcal{H}(U)} \prod_{e\in E(\bT_L^d)} {\rm d} \mu(U_e) \, ,
\end{equation}
where $\mu$ is the Haar measure on $G$, $Z_{\beta,m,L}^{\sf YMH}$ is the partition function, and $\beta>0$ is the inverse \textbf{gauge coupling}.\footnote{This model is sometimes stated in the literature with two parameters: $\beta$, the inverse gauge coupling, and $\kappa$, the Higgs length (see, e.g.\ \cite{Chatterjee-HiggsMechanism, forsstrom2026phasetransitions, Shen-Zhu-Zhu}). For our purposes, we prefer the parametrization where $m = \kappa/\beta$.} Since $G$ is compact, existence of weak limits of $\nu_{\beta,m}^L$ as $L\to \infty$ are guaranteed, and the resulting limits are invariant under translations of $\bZ^d$. With a slight abuse of notation, we will denote any such infinite-volume limit by $\nu_{\beta,m}$.  

\subsection{Logarithmic coordinates}
\label{subsec:logarithmic_coordinates}
    Recall that our gauge group $G$ is a compact connected matrix Lie group with Haar measure $\mu$, which we always normalize so that $\mu(G) = 1$. The Lie algebra $\mathfrak{g}$ is defined as the tangent space of $G$ at the identity element $I\in G$. This makes $\mathfrak{g}$ into a matrix vector space which is isomorphic to $\bR^n$, where $n=\dim(G)$. For our result, we will need to lift configurations from $G^{E(\bZ^d)}$ onto a configuration from $\mathfrak{g}^{E(\bZ^d)}$. We will do so via a local chart of the identity element of $G$, known as the \textbf{logarithmic coordinates}. Our treatment here follows the excellent book~\cite{Hall-LieBook}.
    
    Given any matrix $X$, the exponential of $X$ is given by
    \begin{equation}
		\label{eq:exp_of_matrix}
		\exp(X) = \sum_{k=0}^{\infty} \frac{X^k}{k!} = I + X + \frac{X^2}{2} + \cdots 
	\end{equation}
    It is not hard to check (see~\cite[Chapter~3]{Hall-LieBook}) that $\exp$ maps $\mathfrak{g}$ onto $G$ smoothly, with $\exp(0) = I$. Furthermore, the differential of $\exp$ at $0\in \mathfrak{g}$ is the identity map. Therefore, by the Inverse Function Theorem, we know that $\exp$ restricted to some neighborhood $\sf{U}$ containing 0 is a local diffeomorphism onto $\exp({\sf U}) \subset G$, which is a neighborhood of the identity matrix. We denote the inverse of the exponential map on ${\sf V} = \exp({\sf U})$ by $\log = \exp^{-1}$. Clearly, each choice of basis on $\mathfrak{g}$ determines a coordinate system on ${\sf U}$, and these are exactly the logarithmic coordinates (sometimes also called \emph{normal coordinates}). Later on, we will fix a basis on $\mathfrak{g}$ which is orthonormal with respect to the Hilbert–Schmidt norm~\eqref{eq:HS_inner_product}, so the reader can keep that type of an example in mind. For matrices $U\in G$ such that $\|I-U\| \le 1/2$, the logarithm is explicitly given by
	\begin{equation}
		\label{eq:log_of_matrix}
		\log (U) = -\sum_{k\ge 1} \frac{(I-U)^k}{k} \, .
	\end{equation} 
    We also remark that the neighborhood ${\sf U}\subset G$ always contains a geodesic ball of sufficiently small radius around the identity matrix $I\in G$.

\subsection{The Proca field}
The scaling limit obatained in our main result is that of a particular Gaussian generalized 1-form, known as the \textbf{Proca field}.

    \subsubsection*{The Euclidean Proca field}
    We follow the presentation as given in~\cite[Section~2]{Chatterjee-HiggsMechanism}. Let $\mathcal{A}(\bR^d)$ denote the space of Schwartzian 1-forms on $\bR^d$. That is, an element $F\in \mathcal{A}(\bR^d)$ is formally given by 
    $$
    F=F_1 \, {\rm d}x_1 + \ldots + F_d \, {\rm d}x_d
    $$
    where $F_1,\ldots,F_d\in \mathcal{S}(\bR^d)$ are in the space of Schwartz functions\footnote{A function $g:\bR^d\to \bR$ is called \textbf{Schwartz} if, for all $\alpha_1,\ldots,\alpha_d,\beta_1,\ldots,\beta_d\in \bN$, we have 
    $$
    \sup_{x\in \bR^d} \Big|x_1^{\alpha_1} \cdots x_d^{\alpha_d} \cdot \partial_{x_1}^{\beta_1} \cdots \partial_{x_d}^{\beta_d} \,  g(x) \Big| < \infty
    $$ }. As specifying a 1-form in $\bR^d$ is equivalent to specifying a vector field $F:\bR^d \to \bR^d$, we will always identify an element of $\mathcal{A}(\bR^d)$ as a tuple $F=(F_1,\ldots,F_d)$ of Schwartz functions. We endow $\mathcal{A}(\bR^d)$ with the inner product
    \begin{equation}
        \label{eq:inner_product_on_1_forms}
        (F,G)  =  \sum_{j=1}^{d}  \int_{\bR^d} F_j(x) G_j(x)  \, {\rm d}x \, .
    \end{equation}
    For $m > 0$, we set 
    \begin{equation}
        \label{eq:def_of_K_m}
        K_m = (-\Delta + mI)^{-1} \, ,
    \end{equation}
    where $\Delta$ is the Laplace operator and $I$ is the identity. For $d\ge 2$ and $m>0$, $K_m$ is in fact a bijection from $\mathcal{S}(\bR^d)$ onto itself, and commutes with all derivative operators, see~\cite[Lemma~2.1]{Chatterjee-HiggsMechanism}. With~\eqref{eq:def_of_K_m}, we can now define an operator $R_m:\mathcal{A}(\bR^d) \to \mathcal{A}(\bR^d)$ via
    \begin{equation}
        \label{eq:def_of_R_m}
        R_m F = K_m \big( F - m^{-1} \nabla \cdot \textbf{div}(F)\big) \, ,
    \end{equation}
    or, in coordinates, as
    \[
       R_m F = \sum_{j=1}^{d} \Big(K_m F_j  - m^{-1} \sum_{i=1}^d \partial_j\partial_i K_m F_i \Big) \, {\rm d}x_j \, .
    \]
    It is not hard to see (see, for instance, \cite[Section~5]{Cao-Sheffield-FGF-overview}) that $R_m^{-1} = (mI + {\rm d}^\ast {\rm d})$, where ${\rm d}$ is the usual exterior derivative and ${\rm d}^\ast$ is the dual to the exterior derivative with respect to the inner product~\eqref{eq:inner_product_on_1_forms}.
    \begin{definition}
        \label{def:proca_field}
        For $m>0$ we define the \textbf{Euclidean Proca field} on $\bR^d$ to be the random generalized 1-form $\cX_m$, such that for any $F\in \mathcal{A}(\bR^d)$, $\cX_m(F)$ is a Gaussian random variable with mean zero and variance $(F,R_mF)$, where $R_m$ is given by~\eqref{eq:def_of_R_m}.
    \end{definition}
    \noindent
    Clearly, the Proca field is translation invariant, and it is also scale invariant with proper normalization (see~\cite[Section~2.3]{Chatterjee-HiggsMechanism}). What is more important for our application, is that the Euclidean Proca field with parameter $m>0$ is in fact \emph{massive}, i.e.\ has exponential decay of correlations, see \cite[Lemma~2.6]{Chatterjee-HiggsMechanism} and Lemma~\ref{lemma:lattice_Proca_is_massive} below. Further properties of the Euclidean Proca field can be found in \cite{Cao-Sheffield-FGF-overview, Chatterjee-HiggsMechanism, Yao1975ProcaField} and references therein.
    
    \subsubsection*{Lie algebra valued Proca field}

    We will denote by $\mathfrak{D}$ the set of all compactly supported, $C^\infty$-smooth functions from $\bR^d$ to $\mathfrak{g}^d$. Recall that $\mathfrak{g}$ is a vector space of dimension $n=\text{dim}(G)$. Let $V_1,\ldots,V_n\in \mathfrak{g}$ be an orthonormal basis with respect to the inner product~\eqref{eq:HS_inner_product}. For $F\in \mathfrak{D}$, that is $F=(F_1,\ldots,F_d)$ such that
    \[
    \qquad F_j : \bR^d \to \mathfrak{g} \, , \qquad 1\le j\le d \, ,
    \]
    we define $C^\infty$-smooth, compactly supported functions $F^\ell : \bR^d \to \bR^d$ via the formula
    \[
    F_j^\ell (x) = \langle F_j(x) , V_\ell \rangle \, .
    \]
    \begin{definition}
    \label{def:g_valued_proca_field}
        For $m>0$ we define the \textbf{$\mathfrak{g}$-valued Proca field} on $\bR^d$ to be the random generalized $\mathfrak{g}$-valued 1-form $\cX_{\mathfrak{g},m}$, such that for any $F\in \mathfrak{D}$, the random variable $\cX_{\mathfrak{g},m}(F)$ is a Gaussian random variable with mean zero and variance
        \[
        \sum_{\ell = 1}^n (F^\ell ,R_m F^\ell) \, ,
        \]
        where $R_m$ is given by~\eqref{eq:def_of_R_m}. 
    \end{definition}
    \noindent
    It is evident that Definition~\ref{def:g_valued_proca_field} does not depend on the choice of the basis $\{V_1,\ldots,V_n\}$. In fact, this is exactly the Gaussian generalized function one obtains by considering $n$ independent Euclidean Proca fields $P_1,\ldots,P_n$ (each having the law as described in Definition~\ref{def:proca_field}) and considering the formal, $\mathfrak{g}$-valued 1-form given by
    \[
    \sum_{j=1}^{d} \bigg(\sum_{\ell = 1}^{n} P_\ell V_\ell \bigg) \, {\rm d} x_j \, ,
    \]
    see Remark~\ref{remark:cordinates_are_independent_for_free_bdy_conditions} below for more details. The random generalized 1-form $\cX_{\mathfrak{g},m}$ will occur as the scaling limit for our lattice Yang-Mills-Higgs theory, as described in Section~\ref{subsec:lattice_YMH_model}. 
    
    \subsection{Main result}
    \label{subsec:main_result}
    
    We are ready to state the main result of the paper, which establishes the $\mathfrak{g}$-valued Proca field as a scaling limit of the lattice Yang-Mills-Higgs model described in Section~\ref{subsec:lattice_YMH_model} under a certain scaling between the gauge coupling and the lattice spacing. 

    Let $\nu = \nu_{\beta,m}$ be an infinite volume limit of the lattice Yang-Mills-Higgs measure given by~\eqref{eq:def_of_YMH_measure_on_torus}. We want to ``lift" a configuration $(U_e) \in G^{E(\bZ^d)}$ sampled from $\nu_{\beta,m}$ into a $\mathfrak{g}$-valued 1-form on $\bR^d$, which we can do via the logarithmic coordinates\footnote{The reason for doing this traces back to the derivation of the lattice gauge theory model. In short, the lattice model is defined with $G$-valued edge variables because it is a discretized version of the continuum $\mathfrak{g}$-valued gauge field, obtained in the following way. If $e = (a, a+e_i)$ for $a \in \bZ^d$ and $e_i$ a standard basis vector, and $A$ is a $\mathfrak{g}$-valued 1-form, then $U_e \approx \exp(cA_i)$, where $c$ is a constant that depends on the lattice spacing and the gauge coupling; for more details, see \cite[Section~3]{chatterjee2016YMSurvey}. To recover a continuum limit, we apply the (matrix) logarithm to move back to $\mathfrak{g}$.} defined in Section~\ref{subsec:logarithmic_coordinates}. Indeed, let $\Log : G\to \mathfrak{g}$ be defined via 
    \begin{equation}
    \label{eq:def_of_Log_with_truncation}
        \Log(U) = \begin{cases}
            \log(U) & U\in {\sf V} , \\ 0 & \text{else}\, ,
        \end{cases}
    \end{equation}
    where we recall that ${\sf V}\subset G$ is a neighborhood of the identity matrix where the exponential map is invertible, and $\log = \exp^{-1}$. The point of~\eqref{eq:def_of_Log_with_truncation} is that we want to define the logarithm globally on $G$ and not worry about branch points or places where the power series in equation~\eqref{eq:log_of_matrix} does not converge. For our particular scaling limit, the value that $\Log$ takes outside of ${\sf V}$ will not play any role, so we might as well set it to be $0$. Indeed, when sampling $(U_e)$ from $\nu$, we will see (Lemma~\ref{lemma:large_values_for_YMH_field_are_rare} below) that when $\beta$ is large, local configurations will consist of matrices in ${\sf V}$, with high probability. 

    Given $(U_e)\in G^E$ sampled from $\nu_{\beta,m}$, we will consider a field $(A_e) \in \mathfrak{g}^E$, given by
    \begin{equation*}
\label{eq:def_of_field_A_lie_algebra}
        A_e = \sqrt{\beta} \, \Log(U_e) \, .
    \end{equation*}
    The random field $A=(A_e)$ will be identified as a random $\mathfrak{g}$-valued 1-form in the following way. For $x\in \bR^d$, let $v(x)\in \bZ^d$ be the closest lattice point to the point $x$ (if there are more than one ``closest" lattice points, we choose the smallest one in lexicographic order). Define
    \begin{equation}
        \label{eq:def_of_Z}
        \qquad Z_j(x) = A_{(v(x),v(x)+e_j)} \, , \qquad 1\le j\le d\, .
    \end{equation}
    The random field $Z:\bR^d \to \mathfrak{g}^d$ acts on test functions $F\in \mathfrak{D}$ in a natural way, namely
    \begin{equation}
        \label{eq:action_of_Z_on_F}
        Z(F) = \sum_{j=1}^{d} \int_{\bR^d} \big\langle Z_j(x),F_j(x)\big\rangle \, {\rm d}x \, .
    \end{equation}
    From this point of view, $Z$ can be considered as a random generalized $\mathfrak{g}$-valued 1-form defined on $\mathfrak{D}$, see the book~\cite[Chapter~III, Section~1.2]{Gelfand-Vilenkin-Vol4}. Similarly, for all $\eps>0$ we set
   \begin{equation*}
        \label{eq:Z_after_scaling_by_eps}
        Z^\eps(x) = \eps^{-(d-2)/2} \,  Z(\eps^{-1}x) \, ,
    \end{equation*}
    and define $Z^\eps(F)$ exactly as in~\eqref{eq:action_of_Z_on_F}. We see that $Z^\eps$ is a random linear functional defined on $\mathfrak{D}$, e.g.\ in the sense of~\cite[Chapter~III]{Gelfand-Vilenkin-Vol4}. The main result of this paper is to identify the $\mathfrak{g}$-valued Proca field as the limit law of $Z^\eps$ in a certain range of asymptotic parameters.
\begin{thm} \label{thm:main_result}
    For $m,\beta,\eps>0$  let $\nu_{\beta,\eps m}$ be an infinite volume limit of~\eqref{eq:def_of_YMH_measure_on_torus} with inverse gauge coupling $\beta>0$ and mass $\eps m$. Assume that $\beta\to \infty$ and $\eps \to 0$ simultaneously such that 
    \begin{equation} \label{eq:assumption_on_rate_of_beta_in_terms_of_eps}
        \beta^{-1} \le \eps^{C_{d,n}} \, ,
    \end{equation}
    for some $C_{d,n}>0$.  
    Then the random generalized $\mathfrak{g}$-valued 1-form $Z^{\eps}$ converges in distribution to $\cX_{\mathfrak{g},m}$, the $\mathfrak{g}$-valued Proca field with mass $m>0$, as described in Definition~\ref{def:g_valued_proca_field}.
\end{thm}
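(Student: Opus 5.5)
We will follow Chatterjee's strategy for $SU(2)$: compare the Yang--Mills--Higgs measure to a Gaussian lattice Proca field and control the error in the large-$\beta$ regime. Convergence in distribution of the generalized form $Z^\eps$ amounts to showing that, for each fixed $F\in\mathfrak{D}$, $Z^\eps(F)$ converges in law to a centered Gaussian with variance $\sum_\ell (F^\ell,R_mF^\ell)$. Linearity and the Cramér--Wold device then handle finite collections of test functions. By the quadratic expansion near the identity, the natural comparison object is the \emph{lattice $\mathfrak{g}$-valued Proca field}. This is the Gaussian measure on $\mathfrak{g}^{E}$ with density proportional to
\[
\exp\Big(-\tfrac12\sum_p\|({\rm d}A)_p\|^2-\tfrac{\eps m}{2}\sum_e\|A_e\|^2\Big),
\]
where $({\rm d}A)_p = A_{e_1}+A_{e_2}-A_{e_3}-A_{e_4}$. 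Expanding in the orthonormal basis $V_1,\dots,V_n$ splits it into $n$ independent scalar lattice Proca fields. The covariance of each is $(\eps m + {\rm d}^\ast{\rm d})^{-1}$, whose exponential decay is given by Lemma~\ref{lemma:lattice_Proca_is_massive}.

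\textbf{Step 1 (lattice Proca scaling limit).} For the Gaussian lattice field, show that the rescaled smeared field has variance converging to $(F^\ell,R_mF^\ell)$ for each $\ell$. This is a Fourier computation on $\bZ^d$. The lattice symbol of $(\eps m+{\rm d}^\ast{\rm d})^{-1}$, evaluated at momenta of order $\eps$ and rescaled by $\eps^{-2}$, converges to that of $R_m=(m+{\rm d}^\ast{\rm d})^{-1}$. The mass term $m^{-1}\nabla\,\textbf{div}$ comes from the longitudinal modes, which are damped only by the mass. Dominated convergence applies once we use the Schwartz decay of $\hat F$. This step is essentially identical to the $SU(2)$ case.

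\textbf{Step 2 (localization).} Lemma~\ref{lemma:large_values_for_YMH_field_are_rare} says that, with probability tending to one, all edges in the $\eps^{-1}$-box supporting $F$ lie in a small ball around $I$ in which $\Log$ is smooth. We quantify this as $\nu(\|I-U_e\|>\delta)\le e^{-c\beta\delta^2}$, combined with a union bound over $O(\eps^{-d})$ edges. On this event we write $U_e=\exp(X_e)$ with $A_e=\sqrt\beta X_e$. The Haar measure becomes Lebesgue measure times a Jacobian $1+O(\|X\|^2)$. Baker--Campbell--Hausdorff gives $\log(({\sf d}U)_p) = ({\rm d}X)_p + O(\|X\|^2)$, where the quadratic error consists of commutators. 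Hence
\[
\beta\mathcal H(U)=\tfrac12\sum_p\|({\rm d}A)_p\|^2+\tfrac{\eps m}{2}\sum_e\|A_e\|^2 + O\big(\beta^{-1/2}\textstyle\sum \|A\|^3\big).
\]

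\textbf{Step 3 (comparison, the main obstacle).} We need to show that the non-Gaussian corrections change the law of $Z^\eps(F)$ by $o(1)$. These corrections are the commutator terms from non-abelian $G$, the Jacobian, and the truncation. The difficulty is that $Z^\eps(F)$ depends on $\eps^{-d}$ edges, so naive perturbative errors accumulate. Moreover, the Gaussian reference has a mass only of order $\eps$ in lattice units, so its correlation length is $\eps^{-1/2}$. My plan is to control the characteristic function $\bE e^{itZ^\eps(F)}$ by one of two routes. The first is a Stein/integration-by-parts identity on $G$: use the Lie-algebra derivatives $\partial_{V_\ell}$ at each edge to derive the approximate Schwinger--Dyson equation
\[
\bE[A_e^\ell\, \phi]\approx \bE[((\eps m+{\rm d}^\ast{\rm d})^{-1}\partial\phi)_e^\ell].
\]
The second is a direct comparison via an interpolation between the two actions in a finite box, with boundary influence suppressed by the massive decay of Lemma~\ref{lemma:lattice_Proca_is_massive}.

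The errors are polynomial in $\eps^{-1}$: volume $\eps^{-d}$, correlation length $\eps^{-1/2}$, and a factor $\log\beta$ from localization. Each is multiplied by $\beta^{-1/2}$. So the hypothesis $\beta^{-1}\le \eps^{C_{d,n}}$ with $C_{d,n}$ large makes the total $o(1)$; $n$ enters through the number of components and the Jacobian constants.

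The genuinely new ingredients for general $G$ are twofold. We need uniform bounds on the BCH remainder and the Haar density in logarithmic coordinates on a fixed neighborhood ${\sf U}$, which follow from compactness and smoothness of $\exp$. We also need to handle the fact that $\Log$ is no longer an explicit $SU(2)$ parametrization. With those in hand, combining Steps 1--3 gives convergence of characteristic functions, and hence the theorem.
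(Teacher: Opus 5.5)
Your outline has the same architecture as the paper's proof. You localize near $I$, pass to logarithmic coordinates, and expand the action and Haar Jacobian to reach the Gaussian action, with errors polynomial in $\eps^{-1}$ times negative powers of $\beta$. You then decouple the boundary of a box by massive decay, and finish with a lattice-to-continuum statement for the Gaussian field. Your Fourier computation is a legitimate substitute for the operator-approximation argument the paper borrows from Chatterjee, though it concerns the translation-invariant Gaussian and still has to be combined with boundary decoupling.

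\textbf{The gap is Step~3.} You call it the main obstacle but leave it as a choice between two unexecuted strategies. No Stein or Schwinger--Dyson argument is needed: your own error count already shows the crude comparison works. The paper proceeds as follows.
\begin{itemize}
\item It fixes a box $Q_L$ with $L=\lfloor\eps^{-1-\kappa}\rfloor$, much larger than the $\eps^{-1}$-box carrying $F$.
\item It conditions $\nu$ on the boundary configuration $\boldsymbol\gamma$ (domain Markov property) and restricts to the event $\cE$ that $\|I-U_e\|\le\beta^{\kappa-1/2}$ on every edge.
\item On $\cE$, the pushforward density in logarithmic coordinates is $e^{-\beta S(X)}(1+\psi(X))$. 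Here $S$ carries boundary data $\log\boldsymbol\gamma$, and $\sup|\psi|\lesssim L^d(M+\beta M^3)\lesssim L^d\beta^{3\kappa-1/2}$.
\item Claim~\ref{claim:total_variation_bound_between_gibbs_measures} then gives total-variation closeness of the \emph{whole} box configuration, uniformly over good $\boldsymbol\gamma$.
\item Removing the conditioning costs exactly $\cP(E^c)$, since $d_{\sf TV}(\cP,\cP_E)=\cP(E^c)$.
\end{itemize}
What genuinely remains is to show that the law of $Z^\eps(F)$ under the Gaussian field with small boundary data $\boldsymbol\eta$ is asymptotically independent of $\boldsymbol\eta$. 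Your ``interpolation in a finite box'' would have to supply this too. The paper does it by explicit Gaussian conditioning: the interior is $\mathcal N(QS^{-1}\boldsymbol\eta_1,\,T-QS^{-1}Q^T)$. Both correction terms are controlled by the exponential decay of the lattice Proca covariance, via Lemma~\ref{lemma:lattice_Proca_is_massive}, Lemma~\ref{lemma:conditioning_expectation_bound}, and the covariance-comparison lemma in the appendix.

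\textbf{Two intermediate claims also need fixing.}

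First, the tail bound $\nu(\|I-U_e\|>\delta)\le e^{-c\beta\delta^2}$ is not available, and it cannot hold uniformly as the mass $\mu\to0$. A gauge transformation at a single vertex only conjugates plaquette variables, so it costs no plaquette energy and is pinned only by the mass term. Hence $\|I-U_e\|$ fluctuates on the scale $(\beta\mu)^{-1/2}$; the free lattice Proca field likewise has single-edge variance of order $(\mu\beta)^{-1}$. What one can prove, and all that is needed, is the second-moment bound from the partition-function lower bound, as in Lemma~\ref{lemma:large_values_for_YMH_field_are_rare}. Keeping the factor $\mu$ in $\mathcal H\ge\frac{\mu}{2}\sum_e\|I-U_e\|^2$, it reads $\bE\|I-U_e\|^2\lesssim\log\beta/(\mu\beta)$. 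Markov plus a union bound over the box is then still polynomially small, because $\beta\ge\eps^{-C_{d,n}}$.

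Second, Step~1 fails with lattice mass $\eps m$. Take $\eps^2$ times the symbol of $(\eps m+{\sf d}^\ast{\sf d})^{-1}$ at momentum $\eps k$. It equals $(m/\eps+|k|^2)^{-1}$ on transverse modes and $\eps/m$ on longitudinal ones. Neither converges to the symbol of $R_m$, and the limiting variance would be $0$. With the $\eps^{-(d-2)/2}$ normalization of $Z^\eps$, recovering $R_m=(m+{\sf d}^\ast{\sf d})^{-1}$ requires lattice mass $\eps^2 m$. The correlation length is then $\eps^{-1}$, not $\eps^{-1/2}$, which is comparable to the support of $F$. That is why both the comparison box and the localization event must extend well beyond the $\eps^{-1}$-box carrying $F$.

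All of these adjustments keep the errors polynomial in $\eps^{-1}$, so your strategy goes through once Step~3 is carried out as above.
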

\begin{remark}
    While we do not attempt to pin down an optimal constant in~\eqref{eq:assumption_on_rate_of_beta_in_terms_of_eps}, we remark that it scales linearly in both the lattice dimension $d$ and the $n = \text{dim}(\mathfrak{g})$. For concreteness, taking $C_{d,n} = 100dn$ works.
\end{remark}

\subsection{Complete breakdown of symmetry}
\label{subsec:complete_breakdown_of_symmetry}
    To give some context to Theorem~\ref{thm:main_result}, we explain the relation between the probability measure~\eqref{eq:def_of_YMH_measure_on_torus} and a more general version of the Yang-Mills-Higgs model from particle physics\footnote{also known by the name \emph{The Standard Model}.}. Following~\cite[Chapter~84]{Srednicki-QFT}, this is a Gibbs measure on pairs $(U,\phi)$ which correspond to the action
    \begin{equation*}
        -\beta\sum_{p\in P(\bT_{L}^d)}\mathrm{Re}(\Tr({\sf d}U)_p) -\alpha\sum_{e= (x,y) \in E(\bT_L^d)}\mathrm{Re}(\Tr(\phi_x^*U_e\phi_y)) + \sum_{x \in V(\bT_L^d)}W(|\phi_x|^2) \, .
    \end{equation*}
    Here, $\alpha,\beta\ge 0$ and parameters and $W:[0,\infty] \to \bR$ is some potential. The \emph{Yang-Mills field} is the collection of matrices $U=(U_e)_{e\in E(\bT_{L}^d)}$ taking values in the gauge group $G$ and the \emph{Higgs field} $\phi = (\phi_x)_{x\in V(\bT_{L}^d)}$ takes values in some target manifold $M$. Some common choices in the literature include $M\in\{ \bS^{N-1} ,  G, \mathfrak g\}$, see for instance~\cite{Chatterjee-HiggsMechanism, Fradkin-Shenker, Seiler-book}, and \cite[Section 2.1]{Shen-Zhu-Zhu} for a more exhaustive description of these models. In the present paper, we take $M = G$, and since $G\subset U(N)$, the potential term $W(|\phi_x|^2)$ is constant and can be dropped from the action. Furthermore, working in the regime where $\alpha = \beta m$ is held fixed, we are left with
    \begin{equation} \label{eq:action_before_unitary_gauge}
          -\beta\sum_{p\in P(\bT_{L}^d)}\mathrm{Re}(\Tr({\sf d}U)_p) -\beta m\sum_{e= (x,y) \in E(\bT_L^d)}\mathrm{Re}(\Tr(\phi_x^*U_e\phi_y).
    \end{equation}
    A further reduction is possible once we take into account the gauge symmetry. 
    For any $g: V \to G$, we can associate a \emph{gauge transformation}, which is simply the mapping
        \begin{equation*}
            U_e \mapsto g_x U_eg_y^{-1}, \qquad \phi_x \mapsto g_x \phi_x \,, \qquad e = (x,y) \, ,
        \end{equation*}
    It is not hard to check that both ${\sf d}U_p$ and the probability measure are invariant under theses gauge transformations (see, e.g.,~\cite[Lemma~2.1]{Shen-Zhu-Zhu}). Since only gauge-invariant observables are of physical interest, we may as well consider the model with the gauge fixed. For us, this will be the \emph{unitary gauge}, setting $g_x = \phi_x^{-1}$ for all $x \in V$, which de facto fixes the Higgs field at each vertex to be the identity matrix. After this reductions, it remains to note that for any unitary matrix $U$
    \begin{equation*}
        \|I - U||^2 = \Tr(2I) - 2\mathrm{Re}(\Tr(U)) \, .
    \end{equation*}
   Hence, the Gibbs measure for the action~\eqref{eq:action_before_unitary_gauge}, when fixed to the unitary gauge, has density proportional to $\exp(-\beta \mathcal{H}(U))$, where
    \begin{equation*}
        \mathcal{H}(U) = \frac{1}{2} \sum_{p\in P(\bT_{L}^d)} \| I - ({\sf d}U)_p \|^2 + \frac{m}{2}\sum_{e\in E(\bT_L^d)} \| I - U_e\|^2 \, , 
    \end{equation*}
    which is precisely the model~\eqref{eq:def_of_YMH_measure_on_torus} studied in this paper.

    \subsection{Related works}
    \label{subsec:related_works_intro}
    Our work is most closely related to Chatterjee's recent paper~\cite{Chatterjee-HiggsMechanism}, and we refer the reader to~\cite[Section 3.3]{Chatterjee-HiggsMechanism} for further references to the literature on scaling limits of lattice Yang-Mills-Higgs theories. In~\cite{Chatterjee-HiggsMechanism}, Chatterjee constructs a similar scaling limit as in Theorem~\ref{thm:main_result}, when the gauge group $G$ is either $U(1)$ or $SU(2)$. In this regard, one can view the present work as an extension of~\cite{Chatterjee-HiggsMechanism}. 
    Recall that both $U(1)$ and $SU(2)$ are diffeomorphic to spheres ($\bS^1$ and $\bS^{3}$, respectively), and that these are the only spheres with group structure compatible with their smooth structure (i.e., Lie groups). This subtle property is used in~\cite{Chatterjee-HiggsMechanism} in order to get the Proca field as a scaling limit, as the lifting of the edge configuration is done via the stereographic projection (which differs from our use of logarithmic coordinates). While the stereographic projection is only defined for spheres, the logarithmic coordinates work for a general Lie group $G$. Further, one may argue that using logarithmic coordinates is the ``correct" way to get the scaling limit, as can be seen from the derivation of the lattice Yang-Mills model, in which the edge variables are derived as exponentials of a $\mathfrak{g}$-valued gauge configuration. Even though the stereographic projection and logarithmic map are generally different, they result in the same scaling limit, since their first order linear approximation near the identity element of $G$ is the same.
    
    As we already mentioned, the limiting continuum object for our main result is the \textit{Proca field} (or rather, its $\mathfrak{g}$-valued version), which has been recurrent in the gauge theory literature in both physics and mathematics. The Proca equations were first written down in the physics literature by Romanian physicist Alexandru Proca~\cite{Proca1936theorie}, and they describe a massive spin-1 vector boson. From the mathematical physics point of view, it has been studied in~\cite{Gross1974FreeProca, GinibreVelo1975EuclideanMassiveField, Yao1975ProcaField}, and more recently in~\cite{Chatterjee-HiggsMechanism, ChatterjeeYakir2025Decay}.
    The Proca field is in fact an example of a generalized Gaussian differential form, see the recent survey~\cite{Cao-Sheffield-FGF-overview} for an overview of the theory of Gaussian generalized forms. and their context in Yang-Mills theory. In fact, in~\cite[Lemma 5.6]{Cao-Sheffield-FGF-overview} the (massive) Proca field (as in Definition~\ref{def:proca_field}) is defined directly in the continuum, in contrast to~\cite{Chatterjee-HiggsMechanism}, where the field is constructed via a limit of lattice Proca fields. Both approaches work nicely for the $\mathfrak{g}$-valued Proca field (Definition~\ref{def:g_valued_proca_field}) and have their benefits, and it is the latter approach that we take in this paper.

\subsection*{Notation}
   To ease on the readability, we conclude the introduction with a list of notation that are used throughout the paper. 
    \begin{itemize}
        \item $G\subset U(N)$ a connected, matrix Lie group; $\mathfrak{g}$ its Lie algebra;
        \vspace{0.5mm}
        \item $\exp:\mathfrak{g} \to G$ the exponential map; 
        \vspace{0.5mm}
        \item $\Log:\mathfrak{g}\to G$ the logarithmic map with a truncation, see~\eqref{eq:def_of_Log_with_truncation};
        \vspace{0.5mm}
        \item $\langle A,B \rangle$ Hilbert-Schmidt inner product given by~\eqref{eq:HS_inner_product}; $||A|| = \langle A, A \rangle^{1/2}$ is the induced norm;
%        \vspace{0.5mm}
        \item $\mathcal{A}(\bR^d)$ space of Schwartz 1-forms; $(F,G)$ the inner product on $\mathcal{A}(\bR^d)$ given by~\eqref{eq:inner_product_on_1_forms};
        \vspace{0.5mm}
        \item $R_m : \mathcal{A}(\bR^d) \to \mathcal{A}(\bR^d)$ differential operator given by~\eqref{eq:def_of_R_m};
        \vspace{0.5mm}
        \item $\mathfrak{D}$ space of compactly supported, $C^\infty$-smooth functions from $\bR^d$ to $\mathfrak{g}^d$;
        \vspace{0.5mm}
        \item ${\tt Leb}$ is the Lebesgue measure on $\mathfrak{g}$;
        \vspace{0.5mm}
        \item $\cX_{\mathfrak{g},m}$ is the $\mathfrak{g}$-valued Proca field on $\bR^d$ with mass $m>0$ (Definition~\ref{def:g_valued_proca_field}). 
    \end{itemize}
    We will use the Landau notation $O,o,\Theta$ freely to denote inequalities and limits up to non-asymptotic constants. We will also write $X\lesssim Y$ if $X = O(Y)$. 

\subsection*{Acknowledgments} 
We thank Sourav Chatterjee for very helpful discussions. F.R.\ is supported in part by NSF Graduate Research Fellowship Program DGE-2146755. O.Y.\ is supported in part by NSF grant DMS-2401136. Y.Z.\ is supported in part by NSF grant DMS-2348142, DMS-2450608 and by BSF grant 2024020.

\section{Breakdown of the proof}
\label{sec:breakdown_of_the_proof}
\noindent
In this section, we start with breaking down the proof of Theorem~\ref{thm:main_result} into smaller, more manageable steps. Following~\cite[Chapter~III]{Gelfand-Vilenkin-Vol4}, to prove Theorem~\ref{thm:main_result} we actually need to show that for any fixed $F \in \mathfrak{D}$ we have that 
\begin{equation*}
Z^\eps(F) \xrightarrow{\eps \to 0} \cX_{\mathfrak{g},m}(F)     
\end{equation*}
in law. Broadly speaking, this limit will be obtained in two steps. First, we will show that the lattice $1$-form $Z^\eps$ induced from the Yang-Mills-Higgs measure $\nu_{\beta,\eps m}$ is in fact close (in total variation distance) to a particular random Gaussian 1-form defined on the same lattice (the lattice Proca field, defined below). After that, the desired limit will follow once we show that the latter converge (in law) to the $\mathfrak{g}$-valued Proca field, as the lattice spacing shrinks and the inverse coupling constant grows large. 

\subsection{Gibbs measures on the lattice}
\label{subsec:gibbs_meaures_on_lattice}
Let $L\ge 1$ and denote by $Q_L = [-L,L]^{d}\cap \bZ^d$. We will denote by $E(Q_L)$ and $P(Q_{L})$ the edges and the plaquettes contains in $Q_L$, respectively. Let $\partial Q_L$ denote the boundary edges in $Q_L$ (that is, those edges which connect a vertex from $Q_L$ and $Q_{L}^c$). We now define the lattice Yang-Mills-Higgs measure with prescribed boundary conditions.  
    \begin{definition}
    \label{def:YMH_measure_on_lattice_with_given_boundary_conditions}
        Given $\beta>0$ and a configuration $\boldsymbol{\gamma} = (\gamma_e)_{e\in \partial Q_L}$ with $\gamma_e\in G$, the lattice Yang-Mills-Higgs gauge field (with boundary conditions $\boldsymbol{\gamma}$, mass $m$ and inverse coupling $\beta>0$) is a configuration $(U_e)\subset G^{E(Q_L)}$ sampled from the probability measure
        \[
        {\rm d}\bP_{\beta,m,\boldsymbol{\gamma}}^{{\sf YMH}}(U) = \frac{1}{Z_{\beta,m,\boldsymbol{\gamma}}^{{\sf YMH}}} \,  \exp\big(-\beta\mathcal{H}(U)\big) \prod_{e\in E(Q_L)} {\rm d} \mu(U_e) \prod_{e\in \partial Q_L } {\rm d}\delta_{\gamma_e}(U_e) \, ,
        \]
        where $\mathcal{H}$ is the lattice Yang-Mills-Higgs action given by~\eqref{eq:YMH_action_with_mass_m}. 
    \end{definition}
    \noindent
    The motivation for Definition~\ref{def:YMH_measure_on_lattice_with_given_boundary_conditions} is clear: For $\nu_{\beta,m}$ an infinite volume limit of~\eqref{eq:def_of_YMH_measure_on_torus}, the domain Markov property (see, e.g.~\cite[Chapter~2]{georgii2011gibbs}) implies that the law of $\nu_{\beta,m}$ in $Q_L$ conditioned to have boundary values $\boldsymbol{\gamma}$ is exactly $\bP_{\beta,m,\boldsymbol{\gamma}}^{{\sf YMH}}$. The basic idea towards the proof of Theorem~\ref{thm:main_result} is that on a lattice, when $\beta$ is large, the Yang-Mills-Higgs measure is ``close" to a certain Gaussian field which we now define. In fact, this field is exactly the lattice analogue of the $\mathfrak{g}$-valued Proca field as described in Definition~\ref{def:g_valued_proca_field}. Recall that $\|\cdot\|$ is the Hilbert-Schmidt norm~\eqref{eq:HS_inner_product} and ${\tt Leb}$ is the $n$-dimensional Lebesgue measure on $\mathfrak{g}$. 

     \begin{definition}
        \label{def:lattice_g_valued_proca_field}
        For $L\ge 1$, $m,\beta>0$ and a configuration $\boldsymbol{\eta} = (\eta_e)_{e\in \partial Q_L}$ with $\eta_e\in \mathfrak{g}$, the \emph{lattice $\mathfrak{g}$-valued $d$-dimensional Proca field} (with boundary conditions $\boldsymbol{\eta}$, mass $m$ and inverse coupling $\beta>0$) is a configuration $(X_e)\subset \mathfrak{g}^{E(Q_L)}$ sampled from the probability measure
        \[
        {\rm d} \bP_{\beta,m,\boldsymbol{\eta}}^{\sf \mathfrak{g}}(X) = \frac{1}{Z_{\beta,m,\boldsymbol{\eta}}^{\sf \mathfrak{g}}} \, \exp\big(-\beta S(X)\big)  \prod_{e\in E(Q_L)} {\rm d} \, {\tt Leb}( X_e) \prod_{e\in \partial Q_L} {\rm d}\delta_{\eta_e}(X_e) \, ,
        \]
        where,
        \begin{equation} \label{eq:def_of_gaussian_action}
            S(X) =  \frac{1}{2}\sum_{p\in P(Q_L)} \| ({\sf d}X)_p \|^2 + \frac{m}{2} \sum_{e\in E(Q_L)} \|X_e \|^2 \, ,
        \end{equation}
        and, for a plaquette $p\in P(Q_L)$ consisting of the edges $e_1,e_2,e_3,e_4$, we have
        \begin{equation*}
            ({\sf d}X)_p = X_{e_1} + X_{e_2} - X_{e_3} - X_{e_4} \, .
        \end{equation*}
        We will also consider the $\mathfrak{g}$-valued Proca field with \emph{free} boundary conditions, denoted by $\bP_{\beta,m,\text{\normalfont free}}^{\sf \mathfrak{g}}$.
    \end{definition}
    \begin{remark} \label{remark:cordinates_are_independent_for_free_bdy_conditions}
        For free boundary conditions, the coordinates (i.e., the projections of the Proca field onto each element of an orthonormal basis of $\mathfrak{g}$) are independent and identically distributed. Further, each projection has the distribution of a $d$-dimensional Euclidean lattice Proca field, as defined in \cite{Chatterjee-HiggsMechanism}. Indeed, let $V_1, \dots, V_n$ be an orthonormal basis of $\mathfrak{g}$ with respect to the Hilbert-Schmidt norm, and write $X_e = \sum_{i=1}^n g_{i,e} V_i$. Then 
        \begin{align*}
            S(X) &= \frac{1}{2}\sum_{p\in P(Q_L)} \| \sum_{i=1}^n (g_{i,e_1} + g_{i,e_2} - g_{i,e_3} - g_{i,e_4})V_i\|^2 + \frac{m}{2} \sum_{e\in E(Q_L)} \|\sum_{i=1}^n g_{i,e}V_i\|^2 \\
            &= \sum_{i=1}^n\left( \frac{1}{2}\sum_{p\in P(Q_L)} (g_{i,e_1} + g_{i,e_2} - g_{i,e_3} - g_{i,e_4})^2 + \frac{m}{2} \sum_{e\in E(Q_L)} (g_{i,e})^2\right)
        \end{align*}
        by the orthonormality of $V_1, \dots, V_n$.
    \end{remark}    
    \noindent
    Recall that definition of the truncated logarithm $\Log:G\to \mathfrak{g}$, given by~\eqref{eq:def_of_Log_with_truncation}. Given a configuration $U = (U_e)_{e\in E(Q_L)}$ of elements from the Lie group $G$, we will denote by 
    \begin{equation}
        \label{eq:def_of_map_L}
        {\sf L}(U) = \big(\Log(U_e) \big)_{e\in E(Q_L)} \, , \qquad {\sf L} : G^{E(Q_L)} \to \mathfrak{g}^{E(Q_L)} \, .
    \end{equation}
    Clearly, ${\sf L}$ is measurable, and so we denote by ${\sf L}_\ast \bP_{\beta,m,\boldsymbol{\gamma}}^{{\sf YMH}}$ as the push-forward of the Yang-Mills-Higgs lattice measure (Definition~\ref{def:YMH_measure_on_lattice_with_given_boundary_conditions}) onto a probability measure on $\mathfrak{g}^{E(Q_{L})}$. We further denote by 
    \[
    \cE_2 = \big\{ \forall e\in \partial Q_L  \, : \, \| I - U_e \| \le \beta^{\kappa/2 - 1/2} \big\} \, ,
    \]
    as the event of having ``good" boundary conditions, and further denote by $\cA_2 = {\sf L}(\cE_2)$. We will see later that $\cE_2$ is typical under the Yang-Mills-Higgs measure, but for now let us state the next proposition, which is key in our analysis.
       \begin{proposition}
        \label{prop:YMH_is_close_to_Proca}
        Let $L\ge 1$, $m\in (0,1)$ and $\beta\ge \beta_0(n,d)$ be large enough. Denote by $$\bP_{\beta,m,\cE_2}^{\sf YMH} = \nu_{\beta,m} (\cdot \mid \cE_2)$$ an infinite volume limit of~\eqref{eq:def_of_YMH_measure_on_torus} conditioned on $\cE_2$, and further denote by $$\bP_{\beta,m,\mathcal{A}_2}^{\sf g} = \bP_{\beta,m,\text{\normalfont free}}^{\sf g} (\cdot \mid \mathcal{A}_2)$$ the $\mathfrak{g}$-valued Proca field conditioned on $\cA_2$. Then 
        \[
        d_{{\sf TV}}\Big(\bP_{\beta,m,\mathcal{A}_2}^{\sf g} , \, {\sf L}_\ast \bP_{\beta,m,\cE_2}^{\sf YMH} \Big) \le C L^d \, \frac{\log \beta}{\beta^{2\kappa}} + C L^d \exp\Big(-c m \beta^{2\kappa}\Big) \, ,
        \]
        where $c,C>0$ depend only on the lattice dimension $d\ge 2$ and $n=\text{dim}(G)$.
    \end{proposition}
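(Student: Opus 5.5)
The plan is to compare the two measures through their densities on $\mathfrak{g}^{E(Q_L)}$, after cutting away a small ``bad'' set. First I would use the domain Markov property to reduce to finite volume. Conditionally on the boundary values $\boldsymbol{\gamma}\in\cE_2$, the law of $\nu_{\beta,m}$ on $E(Q_L)$ is $\bP^{\sf YMH}_{\beta,m,\boldsymbol{\gamma}}$. Likewise, the Proca field conditioned on boundary values $\boldsymbol{\eta}={\sf L}(\boldsymbol{\gamma})\in\cA_2$ is $\bP^{\mathfrak g}_{\beta,m,\boldsymbol{\eta}}$. I would treat the boundary edges as part of the configuration, so that the comparison is between two measures on $\mathfrak{g}^{E(Q_L)\cup\partial Q_L}$ supported on $\cA_2$. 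If $d_{\sf TV}$ is bounded for each fixed boundary condition, then it remains to check that the boundary marginals are close in the same sense. This follows by the same density argument applied to the joint law.

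Next I would introduce the good event $\cE_1=\{\|I-U_e\|\le \delta\ \forall e\in E(Q_L)\}$, with $\delta=\beta^{\kappa-1/2}$, together with its image $\cA_1={\sf L}(\cE_1)$. The tail step is to show that $\cE_1^c$ and $\cA_1^c$ are negligible under both measures.
\begin{itemize}
\item For the Proca field, each $X_e$ is Gaussian with variance $O((\beta m)^{-1})$, since the mass term gives a uniform lower bound $m$ on the quadratic form. Hence $\bP(\|X_e\|>\delta)\le e^{-cm\beta^{2\kappa}}$.
\item For the Yang--Mills--Higgs measure, I would instead use the single-edge conditional density. Given all other edges, $U_e$ has density at least proportional to $e^{-\beta m\|I-U_e\|^2/2}$ times a bounded factor coming from the plaquettes, which is how Lemma~\ref{lemma:large_values_for_YMH_field_are_rare} is phrased. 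This gives the same bound.
\end{itemize}
A union bound over the $O(L^d)$ edges then produces the term $CL^d\exp(-cm\beta^{2\kappa})$.

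On $\cE_1$ I would change variables $U_e=\exp(X_e)$ and compare integrands. There are three contributions.
\begin{itemize}
\item Haar measure in logarithmic coordinates has density $\det\big((1-e^{-{\rm ad}X})/{\rm ad}X\big)=1+O(\|X\|^2)$ with respect to ${\tt Leb}$.
\item Since $X$ is skew-Hermitian, $\|I-e^{X}\|^2=\|X\|^2+O(\|X\|^4)$.
\item For a plaquette, the Baker--Campbell--Hausdorff formula gives $\log({\sf d}U)_p=({\sf d}X)_p+O(\delta^2)$, where the error consists of commutator terms. Hence $\|I-({\sf d}U)_p\|^2=\|({\sf d}X)_p\|^2+O(\delta\|({\sf d}X)_p\|^2+\delta^3\|{\sf d}X\|+\delta^4)$.
\end{itemize}
So on $\cA_1$ the ratio of unnormalized densities is $\exp\big(\sum_{e,p}\epsilon_{e,p}\big)$, where each local error $\epsilon$ is controlled by $\beta\delta^3$ plus lower-order terms. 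Rather than using the crude pointwise bound, I would take expectations under the Gaussian measure and exploit that $\|({\sf d}X)_p\|^2$ is typically of order $\beta^{-1}$, not $\delta^2$. This improves the typical error per site to something like $\beta^{-2\kappa}\log\beta$, where the logarithm arises from integrating the Gaussian tail up to scale $\delta$.

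The last step is the standard argument for densities $f=e^{-H_1}/Z_1$ and $g=e^{-H_2}/Z_2$ on a common good set. It gives
\[
d_{\sf TV}\le \bE_g|e^{H_2-H_1}-1| + \Big|\tfrac{Z_1}{Z_2}-1\Big| + \text{bad-set masses}.
\]
Both the first term and the partition-function ratio are bounded by $\bE_g\big|\sum(\text{local errors})\big|$, which is at most $CL^d\beta^{-2\kappa}\log\beta$. I expect the main obstacle to be this step: obtaining an error summed over the $O(L^d)$ edges and plaquettes that is linear in $L^d$ and small in $\beta$. The naive pointwise BCH bound $\beta\delta^3$ is too weak, so one has to expand the BCH correction to second order. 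The cubic term $\langle ({\sf d}X)_p,[X_{e_i},X_{e_j}]\rangle$ is odd, so it has mean zero under the Gaussian measure and contributes only through its absolute moment. I would bound these moments using the Gaussian covariance estimates for the lattice Proca field.
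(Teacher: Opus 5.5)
Your architecture (restrict to a good event, change variables by $\exp$, compare densities, undo the conditioning) is the paper's. However, your tail bound for the Yang--Mills--Higgs field does not follow from the single-edge conditional law. Given the other edges, the plaquette weight seen by $U_e$ is $\exp\bigl(-\tfrac{\beta}{2}\sum_{p\ni e}\|U_e-W_p\|^2\bigr)$, with $W_p\in G$ built from the other three edges of $p$. As $U_e$ varies, this weight ranges between $e^{-c\beta}$ and $1$, and its coupling $\beta$ beats the mass coupling $\beta m$ with $m<1$. If the neighbours place the $W_p$ away from $I$, the conditional law of $U_e$ concentrates away from $I$. So after normalization no bound like $e^{-cm\beta^{2\kappa}}$ survives, and controlling the neighbours first would be circular.

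Lemma~\ref{lemma:large_values_for_YMH_field_are_rare} is not a single-site estimate but a global stability bound. Restricting every torus edge to $\|I-U_e\|^2\le\beta^{-1}$ gives $Z_L^{\sf YMH}\ge(c\beta^{-n/2})^{|E(\bT_L^d)|}$, hence $\beta\,\bE\,\mathcal{H}\lesssim L^d\log\beta$. Translation invariance then gives a second-moment bound of order $\log\beta/\beta$ on $\|I-U_e\|$. Markov plus a union bound yield only the polynomial estimate $CL^d\log\beta/\beta^{2\kappa}$. That is precisely the first term of the proposition, entering via $d_{\sf TV}(\cP,\cP_E)=\cP(E^c)$ when one conditions on $\cE_1$ (Claim~\ref{claim:from_3_to_4}). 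Its $\log\beta$ is the entropy factor $\beta^{n/2}$ per edge, not a Gaussian-tail effect in the density comparison.

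Consequently the obstacle you single out is not there. On $\cA$ all $\|X_e\|$ and $\|\eta_e\|$ are at most $2\beta^{\kappa-1/2}$. So the crude bounds (Jacobian $1+O(M)$ per edge, action error $O(\beta M^3)$ per edge and plaquette) already give, for fixed boundary data, a density ratio $1+O(L^d\beta^{3\kappa-1/2})$, and $\beta^{3\kappa-1/2}\le\beta^{-2\kappa}$ once $\kappa\le 1/10$. This is all the paper does (Lemma~\ref{lemma:from_4_to_5} with Claim~\ref{claim:total_variation_bound_between_gibbs_measures}). Your second-order BCH and Gaussian-moment scheme is unnecessary, and it would anyway need this pointwise bound to linearize $e^{H_2-H_1}$.

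The other step that fails is matching the boundary marginals ``by the same density argument''. The law of $\nu$ on $\partial Q_L$ is not a Gibbs density on the box: it feels the plaquettes outside $Q_L$, which the free box field does not. It differs from the free-box boundary law by an amount that does not vanish as $\beta\to\infty$. The paper never matches these marginals; it compares conditional laws given good boundary data, with a supremum over $\boldsymbol{\gamma},\boldsymbol{\gamma}'$ in Lemma~\ref{lemma:from_4_to_5}. Be aware, though, that this uniformity also fails for $\boldsymbol{\gamma}\neq\boldsymbol{\gamma}'$. Boundary data of size $\beta^{\kappa/2-1/2}$ shift the means of adjacent interior edges by many standard deviations $\beta^{-1/2}$. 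So the boundary layer genuinely has to be handled differently, e.g.\ by comparing the two fields only away from $\partial Q_L$, which is all that Theorem~\ref{thm:main_result} uses.
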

    
    \noindent
    Here and everywhere, $d_{{\sf TV}}$ denotes the total variation distance between two probability measures on the Euclidean space $\mathfrak{g}^{E(Q_L)}$, equipped with the corresponding Borel sigma-algebra $\mathcal{B}$. Recall that for two probability measures $\nu_1,\nu_2$ on $\mathfrak{g}^{E(Q_L)}$, the total variation distance is defined via
    \begin{equation*}
        d_{{\sf TV}}\big(\nu_1,\nu_2\big) = \sup_{E\in \mathcal{B}} \big|\nu_1(E) - \nu_2(E)\big| \, .
    \end{equation*} 
    The proof of Proposition~\ref{prop:YMH_is_close_to_Proca} is given in Section~\ref{sec:proca_approximation_on_lattice}. 
    
\subsection{Discrete to continuum}
\label{subsec:discrete_to_continuum}
    Another step in the proof of Theorem~\ref{thm:main_result} is to show that the properly normalized lattice $\mathfrak{g}$-valued Proca field (Definition~\ref{def:lattice_g_valued_proca_field}) converges in law to the continuum $\mathfrak{g}$-valued Proca field (Definition~\ref{def:g_valued_proca_field}) as the lattice spacing shrinks. To explain this point more formally, we first need to discuss the random 1-form induced by the lattice $\mathfrak{g}$-valued Proca field. As this is completely analogous to the 1-form induced by the Yang-Mills-Higgs field as described in Section~\ref{subsec:main_result}, we will keep this explanation brief. 

    For $L\ge 1$ let $(X_e)_{e\in E(Q_L)}$ be a realization of the lattice $\mathfrak{g}$-valued Proca field with free boundary conditions $\bP_{\beta,\eps m,\text{free}}^{\sf \mathfrak{g}}$, as described in Definition~\ref{def:lattice_g_valued_proca_field}. Extend it to a random field over $E(\bZ^d)$ by setting $X_e = 0$ for $e\not\in E(Q_L)$. This random field can be identified as a random $\mathfrak{g}$-valued 1-form on $\bR^d$, exactly as in Section~\ref{subsec:main_result}. Indeed, for $x\in \bR^d$, let $v(x)\in \bZ^d$ be the closest lattice point to $x$ and set 
    \begin{equation*}
        \label{eq:def_of_Z_j_gaussian}
        \qquad Z_{j}^{\sf \mathfrak{g}}(x) = \sqrt{\beta} \,  X_{(v(x),v(x)+e_j)} \, , \qquad \text{for} \quad  1\le j\le d \, .
    \end{equation*}
   And so, $Z^{\sf \mathfrak{g}}:\bR^d\to \mathfrak{g}^d$ is a random Gaussian $1$-form, which is analogous to the $1$-form~\eqref{eq:def_of_Z} obtained from lifting the gauge field. For $\eps>0$ the scaling is the same as before
    \begin{equation}
    \label{eq:def_of_Z_G_eps}
        Z^{\sf \mathfrak{g},\eps}(x) = \eps^{-(d-2)/2}  \, Z^{\sf \mathfrak{g}}(\eps^{-1}x)  \, ,
    \end{equation}
    and furthermore, for $F\in \mathfrak{D}$ we set
    \begin{equation*}
        \label{eq:Z_G_eps_acting_on_test_function_F}
        Z^{\sf \mathfrak{g},\eps}(F) = \sum_{j=1}^{d} \int_{\bR^d} \big\langle Z_j^{\sf \mathfrak{g},\eps}(x) , F_{j}(x) \big\rangle \, {\rm d}x \, .
    \end{equation*}
    
    \begin{proposition}
    \label{prop:lattice_Proca_converge_to_cont}
        For all $\delta>0$, let $L = \lfloor \eps^{-1-\delta}\rfloor$, and let $Z^{{\sf \mathfrak{g}},\eps}$ be the random 1-form~\eqref{eq:def_of_Z_G_eps} induced from the Gaussian probability measure $\bP_{\beta,\eps m,\text{\normalfont free}}^{\sf \mathfrak{g}}$. Assuming that~\eqref{eq:assumption_on_rate_of_beta_in_terms_of_eps} holds, we have
        \[
        \qquad Z^{{\sf \mathfrak{g}},\eps}(F) \xrightarrow{\quad } \cX_{\mathfrak{g},m}(F) \qquad \text{in law,}
        \]
         for all $F\in \mathfrak{D}$ as $\eps\to 0$ and $\beta\to\infty$ simultaneously. Here, $\cX_{\mathfrak{g},m}$ is the $\mathfrak{g}$-valued Proca field on $\bR^d$, as described in Definition~\ref{def:g_valued_proca_field}. 
    \end{proposition}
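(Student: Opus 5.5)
Since $Z^{\mathfrak{g},\eps}(F)$ is a linear functional of the Gaussian field $(X_e)$, it is a centered Gaussian random variable. Convergence in law to $\cX_{\mathfrak{g},m}(F)$ is therefore equivalent to convergence of variances:
\[
\operatorname{Var} Z^{\mathfrak{g},\eps}(F) \;\longrightarrow\; \sum_{\ell=1}^n (F^\ell, R_m F^\ell).
\]
We first reduce to the scalar case. By Remark~\ref{remark:cordinates_are_independent_for_free_bdy_conditions}, with free boundary conditions the coordinates $g_{\ell,e}=\langle X_e,V_\ell\rangle$ are i.i.d.\ copies of a scalar lattice Proca field on $Q_L$ with action $\beta\big(\tfrac12\sum_p ({\sf d}g)_p^2+\tfrac{\eps m}{2}\sum_e g_e^2\big)$. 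Moreover $Z^{\mathfrak{g},\eps}(F)=\sum_\ell Z^{\eps}_\ell(F^\ell)$ with independent summands. So it suffices to treat one real coordinate and show that $\operatorname{Var} Z^\eps_\ell(F^\ell)\to (F^\ell,R_mF^\ell)$.

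Note that $\beta$ only rescales the Gaussian: $\sqrt\beta\, g$ has action $\tfrac12\sum_p({\sf d}g)^2+\tfrac{\eps m}{2}\sum_e g^2$, independent of $\beta$. Hence condition~\eqref{eq:assumption_on_rate_of_beta_in_terms_of_eps} plays no role in this proposition, beyond letting $\eps\to0$. The problem is then exactly the scalar statement in \cite{Chatterjee-HiggsMechanism} (convergence of the lattice Proca field on a box of side $\eps^{-1-\delta}$ with free boundary). I would either invoke it directly or reprove it along the following lines.

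Write $Z^\eps(F)=\sum_{e} \sqrt{\beta}\,g_e\, w_e^\eps$, where
\[
w^\eps_{(a,a+e_j)}=\eps^{-(d-2)/2}\int_{\text{cell}(a)}F_j(\eps^{-1}x)\,{\rm d}x=\eps^{(d+2)/2}\big(F_j(\eps a)+O(\eps)\big).
\]
The variance is $\langle w^\eps, (\eps m I+{\sf d}^\ast{\sf d})_{Q_L}^{-1} w^\eps\rangle$, where ${\sf d}^\ast{\sf d}$ is the free-boundary discrete operator on $E(Q_L)$.

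The main step is to compare this with the infinite-volume operator $(\eps mI+{\sf d}^\ast {\sf d})^{-1}$ on $\bZ^d$. I would use exponential decay of its kernel at rate $\sim\sqrt{\eps m}$, in the spirit of Lemma~\ref{lemma:lattice_Proca_is_massive}. Since $F$ has support at distance $O(\eps^{-1})$ from the origin while the boundary of $Q_L$ is at distance $\eps^{-1-\delta}$, boundary effects contribute $\exp(-c\sqrt{\eps m}\,\eps^{-1-\delta})\to 0$ up to polynomial factors. Concretely, I would compare the two Gaussian measures by a Gibbs/Markov coupling, or via a resolvent identity bounding the difference of Green's functions at sites far from the boundary.

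On $\bZ^d$, diagonalize in Fourier space. The symbol of $\eps mI+{\sf d}^\ast{\sf d}$ acting on 1-forms is the matrix $\eps m I+|\theta|^2_{\rm lat}I-\hat\partial\hat\partial^\ast$. Its inverse is $(\eps m+\lambda(\theta))^{-1}\big(I-\hat\partial\hat\partial^\ast/\lambda\big)+(\eps m)^{-1}\hat\partial\hat\partial^\ast/\lambda$, with $\lambda(\theta)=\sum_j 4\sin^2(\theta_j/2)$, the discrete analogue of $R_m$. Substituting $\theta=\eps\xi$, the prefactors $\eps^{d+2}$ and $\eps^{-2}$ (from $\eps m+\lambda\approx\eps^2(m+|\xi|^2)$ after rescaling $m$) combine, and dominated convergence using the Schwartz decay of $\hat F$ yields $\int \hat F^\ast(\xi)\big[(m+|\xi|^2)^{-1}(I-\xi\xi^T/m)\big]\hat F(\xi)\,{\rm d}\xi=(F,R_mF)$.

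Two points are delicate. First, the mass scaling: the lattice mass is $\eps m$, while the continuum limit requires $\eps^2 m$ per unit cell. I would check carefully how Chatterjee's normalization matches this, adjusting the scaling of the mass term if necessary. Second, the longitudinal part $(\eps m)^{-1}$ is the singular piece. The boundary comparison, i.e.\ showing the free-boundary box field agrees with the infinite-volume field on the support of $F$, is the main obstacle. Its decay rate is only $\sqrt{\eps m}$, hence the need for $L=\eps^{-1-\delta}$ rather than $\eps^{-1}$.
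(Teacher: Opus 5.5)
Your reduction is exactly the paper's: centred Gaussianity means only variances matter, the $n$ coordinates are independent by Remark~\ref{remark:cordinates_are_independent_for_free_bdy_conditions}, and $\sqrt\beta X$ has a $\beta$-free law, so \eqref{eq:assumption_on_rate_of_beta_in_terms_of_eps} is irrelevant here. After this reduction the paper simply cites Chatterjee's Theorem~4.6.

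Your worry about the mass is not a delicate point to check later. It is decisive, and you should settle it rather than hedge. Write $C=(\mu I+{\sf d}^\ast{\sf d})^{-1}$ for the covariance of one coordinate of $\sqrt\beta X$, and $u_{(a,a+e_j)}=\int_{\eps D_a}F^\ell_j$. Using ${\sf d}^\ast{\sf d}\ge 0$ and Cauchy--Schwarz,
$$\operatorname{Var} Z^\ell(F^\ell)=\eps^{-(d-2)}u^\top Cu\le \eps^{-(d-2)}\mu^{-1}\|u\|^2\le \eps^{2}\mu^{-1}\|F^\ell\|_{L^2}^2 .$$
With $\mu=\eps m$ as written, this is $O(\eps)$. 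So $Z^{\mathfrak g,\eps}(F)\to 0$ in law, while $\sum_\ell (F^\ell,R_mF^\ell)>0$ for $F\neq 0$. The proposition is false as stated and must be read with lattice mass $\eps^2 m$; the same normalization appears in Theorem~\ref{thm:main_result}. The paper's sketch implicitly assumes this. It treats $\widetilde R^{-1}=\eps^{d-2}(\mu+{\sf d}^\ast{\sf d})$ as a discretization of $\eps^d R_1^{-1}=\eps^d(1+{\rm d}^\ast{\rm d})$, which holds only when $\mu=\eps^2$.

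With $\mu=\eps^2 m$, your main step takes a genuinely different route from the paper's.

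\textbf{The paper's route.} Following Chatterjee, it stays on $Q_L$ and never inverts the lattice operator or passes to infinite volume. It compares $u^\top\widetilde Ru$ with $w^\top x$, where $x$ samples $R_1F$. It uses that the local operator $\widetilde R^{-1}$ maps $x$ to $w$ up to $O(\eps^{(d+2)/2})$, together with an operator-norm bound on $\widetilde R$. Free-boundary effects enter only through $\widetilde R^{-1}x$ near $\partial Q_L$, where $x$ is negligible: $R_1F$ is Schwartz and the box has continuum side $\eps^{-\delta}$.

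\textbf{Your route.} The Fourier computation on $\bZ^d$ is more explicit. The rescaled symbol $\eps^2\hat C(\eps\xi)$ has norm at most $\eps^2/\mu=1/m$ uniformly, so dominated convergence is straightforward.

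\textbf{The cost of your route.} You must compare the box with $\bZ^d$, and there Lemma~\ref{lemma:lattice_Proca_is_massive} is too weak. Its Neumann-series bound decays at a rate proportional to the lattice mass $\eps^2 m$. At the boundary this gives only $e^{-cm\eps^{1-\delta}}$, which does not vanish for $\delta<1$. You need decay at rate $\sqrt\mu=\eps\sqrt m$; the resulting correlation length $\eps^{-1}$ is the real reason $L\gg\eps^{-1}$ is needed. This rate follows from a Combes--Thomas argument:
\begin{itemize}
\item Conjugate $\mu+{\sf d}^\ast{\sf d}$ by $e^{\phi}$, with $\phi$ $\alpha$-Lipschitz on edges.
\item On a plaquette with edges $e_k$ and orientation signs $s_k$, the form $\sum_{k,l}s_ks_l\psi_k\psi_l$ becomes $\sum_{k,l}s_ks_l\cosh(\phi_k-\phi_l)\psi_k\psi_l$. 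This is an $O(\alpha^2)\|\psi\|^2$ perturbation.
\item Hence for $\alpha=c\sqrt\mu$ the conjugated operator still has real part at least $\mu/2$. This gives $|C(e,e')|\le (2/\mu)\,e^{-c\sqrt\mu\,{\rm dist}(e,e')}$.
\item The boundary error is then $e^{-c\sqrt m\,\eps^{-\delta}}$ times polynomial factors.
\end{itemize}

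Finally, the continuum symbol of $R_m$ is $(m+|\xi|^2)^{-1}(I+\xi\xi^\top/m)$, not $(m+|\xi|^2)^{-1}(I-\xi\xi^\top/m)$, because $\nabla\,{\rm div}$ has symbol $-\xi\xi^\top$. The corrected form agrees with your lattice decomposition into transverse and longitudinal parts.
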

    \noindent
    The proof of Proposition~\ref{prop:lattice_Proca_converge_to_cont} is proved in Section~\ref{sec:cont_proca_as_lattice_spacing_shrinks} below. We mention that Proposition~\ref{prop:lattice_Proca_converge_to_cont} is the $\mathfrak{g}$-valued version of an analogous statement in~\cite[Theorem~4.6]{Chatterjee-HiggsMechanism} regarding the Euclidean Proca field. In turn, this extension is somewhat routine, and our analysis will be heavily based on the derivation from~\cite{Chatterjee-HiggsMechanism}.

    \noindent
    Assuming Proposition~\ref{prop:YMH_is_close_to_Proca} and Proposition~\ref{prop:lattice_Proca_converge_to_cont}, we are already pretty close to proving our main result Theorem~\ref{thm:main_result}. There are two required steps to complete the proof:
    \begin{enumerate}
        \item[1.] Showing that the event $\cE_2$ for having ``good" boundary conditions (see~\eqref{eq:def_of_E}) of the Yang-Mills-Higgs measure occurs with high probability for $\beta$ large (see Lemma~\ref{lemma:large_values_for_YMH_field_are_rare} below, which proves a slightly stronger statement); and
        \item[2.]  Showing that for the lattice Proca field, a small perturbation of the boundary conditions has a negligible effect on the distribution of the random 1-form $Z^{{\sf G},\eps}$.
    \end{enumerate}
    While these and other small steps will be handled in what follows, we remark that in Section~\ref{sec:cont_proca_as_lattice_spacing_shrinks} we shall combine all of the ingredients, and finally provide the proof of Theorem~\ref{thm:main_result}.
    
\section{Proca approximation on the lattice}
\label{sec:proca_approximation_on_lattice}

\noindent 

The goal of this section is to prove Proposition~\ref{prop:YMH_is_close_to_Proca}.
As the inverse coupling constant $\beta>0$ and the mass $m>0$ will remain fixed in this section, we will lighten on the notation and not indicate the dependence on these parameters for different quantities.

Let $\nu=\nu_{\beta,m}$ be an infinite volume limit of the periodic Yang-Mills-Higgs lattice measure~\eqref{eq:def_of_YMH_measure_on_torus}. Recalling that $Q_L$ is a cube of side-length $L\ge 1$ in the lattice $\bZ^d$, we denote by $\bP_{\boldsymbol{\gamma}}^{\sf YMH} = \bP_{\beta,m,\boldsymbol{\gamma}}^{\sf YMH} $ the lattice Yang-Mills-Higgs measure with boundary conditions $\boldsymbol{\gamma}$ (Definition~\ref{def:YMH_measure_on_lattice_with_given_boundary_conditions}). 

For $\kappa = \kappa(d,n)\in(0,\tfrac{1}{2})$ small but fixed, we will consider the events 
\begin{align}    \label{eq:def_of_E}
    \nonumber \cE_1 &= \big\{ \forall e\in E(Q_L)   \, : \, \| I - U_e \| \le \beta^{\kappa-1/2} \big\} \, , \\ \cE_2 &= \big\{ \forall e\in \partial Q_L  \, : \, \| I - U_e \| \le \beta^{\kappa/2 - 1/2} \big\} \, ,  
\end{align}
and $\cE = \cE_1 \cap \cE_2$.
Note that, for $\beta$ large enough (depending on $G$), we have the obvious inclusion
\[
\big\{ \| I - U \| \le \beta^{\kappa-1/2}\big\} \subset {\sf V} \, ,
\]
where we recall that ${\sf V}\subset G$ is the neighborhood of the identity where the logarithmic map~\eqref{eq:def_of_Log_with_truncation} is a local diffeomorphism. We further set $\cA = {\sf L}(\cE)$ and $ \cA_2 = {\sf L}(\cE_2)$, where ${\sf L}$ is given by~\eqref{eq:def_of_map_L}, and observe that
\begin{equation*}
    {\sf L}_\ast \bP_{\boldsymbol{\gamma}}^{\sf YMH} (\cA) = \bP_{\boldsymbol{\gamma}}^{\sf YMH} (\cE) \, . 
\end{equation*}
Here, we have in mind the boundary conditions $\boldsymbol{\gamma}$ prescribed in Proposition~\ref{prop:YMH_is_close_to_Proca}, though the above equality holds for any boundary conditions. As we will encounter several probability measures throughout this section, both on $G^{E(Q_L)}$ and $\mathfrak{g}^{E(Q_L)}$, we provide in Table~\ref{table:table} a list of those for the reader's convenience:

\begin{table}[H]
\setlength{\tabcolsep}{12pt}
\renewcommand{\arraystretch}{1.3}
\centering
\begin{tabular}{lll}
\toprule
\# & \textbf{Measure} & \textbf{Description} \\
\midrule
1 & $\bP_{\cE_2}^{\sf YMH} = \nu_{\beta,m}(\cdot \mid \cE_2) $ & YMH measure conditioned on the events $\cE_2$ (given by \eqref{eq:def_of_E}) \\
2 & $\bP_{\cE}^{\sf YMH} = \bP_{\cE_2}^{\sf YMH}(\cdot \mid \cE_1 ) $ & YMH measure conditioned on the events $\cE = \cE_1 \cap \cE_2$ \\
3 & ${\sf L}_\ast\bP_{\mathcal{E}_2}^{\sf YMH}$ & Push-forward of $\bP_{\mathcal{E}_2}^{\sf YMH}$ under the map ${\sf L}$ \\
4 & ${\sf L}_\ast\bP_{\mathcal{E}}^{\sf YMH}$ & Push-forward of $\bP_{\mathcal{E}}^{\sf YMH}$ under the map ${\sf L}$ \\
%5 & ${\sf L}_\ast\bP_{\boldsymbol{\gamma},\mathcal{E}}^{\sf YMH}$ & Push-forward of $\bP_{\boldsymbol{\gamma},\mathcal{E}}^{\sf YMH}$ under the map ${\sf L}$ \\
5 & $\bP_{\cA}^{{\mathfrak{g}}} = \bP_{\beta,m,\text{free}}^{{\mathfrak{g}}}(\cdot \mid\cA )$ & Lattice Proca field (Definition~\ref{def:lattice_g_valued_proca_field}) conditioned on the event $\cA$\\
6 & $\bP_{\cA_2}^{{\mathfrak{g}}} = \bP_{\beta,m,\text{free}}^{{\mathfrak{g}}}(\cdot \mid\cA_2 )$ & Lattice Proca field conditioned on the event $\cA_2$ \\
\bottomrule
\end{tabular}

\caption{List of probability measures appearing in Section~\ref{sec:proca_approximation_on_lattice}.\label{table:table} }
\end{table}
\noindent 
Proposition~\ref{prop:YMH_is_close_to_Proca} asserts that the total variation distance between (3) and (6) is small. In turn, this will be a consequence of the fact that
\[
d_{\sf TV}\big((3),(4)\big) \, , \qquad d_{\sf TV}\big((4),(5)\big) \, , \qquad d_{\sf TV}\big((5),(6)\big) \, , 
\]
are all small, and the $d_{\sf TV}$ satisfies a triangle inequality. 
\subsection{Some preparations}
As a first step, we state a simple lemma, which in particular shows that the condition on the boundary values~\eqref{eq:def_of_E} in the statement of Proposition~\ref{prop:YMH_is_close_to_Proca} is typical.
\begin{lemma}\label{lemma:large_values_for_YMH_field_are_rare}
    Let $\nu$ be an infinite volume Yang-Mills-Higgs measure with inverse coupling $\beta\ge 1$ and mass $m\in(0,1)$, and let $L\ge 1$. Then, for all $\kappa>0$, we have
    \[
    \nu\Big(\exists e\in E(Q_L) \cup \partial Q_L \, : \, \|I - U_e \| \ge \beta^{\kappa-1/2}\Big) \le C L^d \frac{\log \beta}{\beta^{2\kappa}} \, .
    \]
\end{lemma}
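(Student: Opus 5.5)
Since $\nu$ is translation invariant and $|E(Q_L)\cup\partial Q_L| \le C L^d$, a union bound reduces the lemma to a single-edge estimate: for a fixed edge $e$,
\[
\nu\big(\|I-U_e\|\ge \beta^{\kappa-1/2}\big) \le C\,\frac{\log\beta}{\beta^{2\kappa}} \, .
\]
Markov's inequality reduces this further to the second-moment bound
\[
\bE_\nu \|I-U_e\|^2 \le C\,\frac{\log \beta}{\beta} \, ,
\]
because then $\nu(\|I-U_e\|^2\ge \beta^{2\kappa-1}) \le C\beta^{1-2\kappa}\log\beta/\beta$. Since $\nu$ is a weak limit of the torus measures $\nu^L_{\beta,m}$ and $U\mapsto \|I-U_e\|^2$ is bounded and continuous, it suffices to prove the moment bound for $\nu^{L'}_{\beta,m}$ uniformly in the torus size $L'$. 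By translation invariance on the torus, that is equivalent to bounding $\bE\,\mathcal H(U)$ by $C|E|\log\beta/\beta$ (with an extra factor $m^{-1}$, discussed below).

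To bound the average energy I would use the standard free-energy argument. Set $\Phi(\beta) = -\log Z_{\beta,m,L'}$. Then $\Phi'(\beta) = \bE_\beta \mathcal H$ and $\Phi$ is concave, so $\bE_\beta\mathcal H \le (\Phi(\beta)-\Phi(\beta/2))/(\beta/2)$. Since $\mathcal H\ge 0$, we get $Z_{\beta/2}\le 1$, hence $\Phi(\beta/2)\ge 0$, and it remains to bound $\Phi(\beta)$ from above, i.e.\ to bound $Z_\beta$ from below. For the lower bound, restrict the Haar integral to the event $\{\|I-U_e\|\le \beta^{-1/2}\ \forall e\}$. On this event each plaquette term satisfies $\|I-({\sf d}U)_p\|\le \sum_{i=1}^4\|I-U_{e_i}\|\le 4\beta^{-1/2}$, using unitary invariance of the norm. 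Hence $\beta\mathcal H\le C|E|$ there, with $C$ depending only on $d$ (and using $m<1$). Moreover, the Haar measure of a geodesic ball of radius $r$ around $I$ is at least $c r^{n}$, by comparing with Lebesgue measure in logarithmic coordinates. This gives
\[
Z_\beta \ge e^{-C|E|}\,(c\beta^{-1/2})^{n|E|} \, ,
\]
so $\Phi(\beta)\le C|E|\,(1+\tfrac{n}{2}\log\beta)$. Combining, $\bE_\beta\mathcal H\le C|E|\log\beta/\beta$.

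The main obstacle is extracting the single-edge bound from the total energy. The mass term only controls $\frac m2\,\bE\|I-U_e\|^2$, which yields $\bE\|I-U_e\|^2\le C\log\beta/(m\beta)$; the lemma as stated has no $m^{-1}$, and it is needed later with mass $\eps m$. I would try to remove the $m$-dependence by applying the concavity argument to the mass term alone. Differentiate in the parameter $\alpha=\beta m$ multiplying $\frac12\sum_e\|I-U_e\|^2$, viewed as a separate variable, and compare $\alpha$ with $\alpha/2$. The upper bound on $-\log Z$ from the small-ball restriction does not depend on $m$, while the lower bound on $-\log Z$ at $\alpha/2$ now needs a bound on the Yang--Mills partition function alone, which is at most $1$. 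This gives $\alpha\,\bE\sum_e\|I-U_e\|^2/2\lesssim |E|\log\beta$, still with an $m^{-1}$. If this cannot be removed, I would accept the $m^{-1}$ factor as implicit in the constant, or absorb it through the hypothesis~\eqref{eq:assumption_on_rate_of_beta_in_terms_of_eps} with $\eps m$ in place of $m$. I expect this $m$-uniformity to be the delicate point.
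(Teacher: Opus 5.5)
Your argument is essentially the paper's. It uses the same reduction (translation invariance, union bound, Markov, and a second-moment bound uniform over tori) and the same small-ball lower bound $Z_\beta\ge (c\beta^{-n/2})^{|E|}$ from restricting to $\|I-U_e\|\le \beta^{-1/2}$. The only difference is how you reach the mean energy. The paper uses $\bE_{\nu_L}[e^{\beta\mathcal{H}}]=1/Z$ and integrates the resulting exponential tail, while you use concavity of $\beta\mapsto -\log Z_\beta$. Both amount to $\beta\,\bE\mathcal{H}\le -\log Z_\beta$, which also follows in one line from nonnegativity of the relative entropy of the Gibbs measure with respect to Haar measure.

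Your worry about the factor $m^{-1}$ is justified, and the paper does not get around it. Its last step asserts $\mathcal{H}(U)\ge \frac12\sum_e\|I-U_e\|^2$, but the mass term in $\mathcal{H}$ carries $\frac m2$. For $m<1$ only $\mathcal{H}(U)\ge\frac m2\sum_e\|I-U_e\|^2$ holds, so the paper's argument yields exactly your bound $\bE\|I-U_e\|^2\le C\log\beta/(m\beta)$.

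The factor is genuinely necessary, so no refinement, including your derivative in $\alpha=\beta m$, can remove it. Fix $\beta$ and a torus and let $m\to 0$. The measure converges to pure Yang--Mills, whose single-edge marginal is exactly Haar (apply a gauge transformation at one endpoint of $e$). Hence no $m$-uniform torus bound of the form $C\log\beta/\beta$ can hold. For the Gaussian model one also has $\bE\|X_e\|^2=\frac{n}{\beta}(mI+{\sf d}^\ast{\sf d})^{-1}(e,e)\gtrsim \frac{n}{d\beta m}$, since ${\sf d}^\ast{\sf d}$ annihilates gradients. So $(\beta m)^{-1}$ is the true order.

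The lemma should therefore be read with $C=C_{d,n}/m$, and your fallback is the correct fix. In the proof of Theorem~\ref{thm:main_result} the lemma is applied with mass $\eps m$. The resulting extra factor $(\eps m)^{-1}$ is polynomial in $\eps^{-1}$ and is absorbed by enlarging $C_{d,n}$ in~\eqref{eq:assumption_on_rate_of_beta_in_terms_of_eps}. The same factor passes harmlessly through Claim~\ref{claim:from_3_to_4} and Proposition~\ref{prop:YMH_is_close_to_Proca}.
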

\noindent
Before proving the lemma, we state another simple claim that will be useful throughout.
\begin{claim} \label{claim:taylor_expansion_for_log}
    For all $U\in G$ such that $\| I - U \|\le 1/2$, we have
    \[
    \frac{1}{2} \|I - U \| \le \| \log U\| \le 2 \, \|I - U \| \, .
    \]
\end{claim}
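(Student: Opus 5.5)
The plan is to write $X = I - U$ and expand the logarithm from~\eqref{eq:log_of_matrix}:
\[
\log U = -\sum_{k\ge 1}\frac{X^k}{k} = -X - R(X), \qquad R(X) = \sum_{k\ge 2}\frac{X^k}{k}.
\]
Everything then reduces to bounding the remainder $R(X)$ in Hilbert--Schmidt norm in terms of $\|X\|$. The upper and lower bounds both follow from the triangle inequality once this is done.

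The key tool is submultiplicativity of the Hilbert--Schmidt norm, $\|AB\|\le \|A\|\,\|B\|$. This holds since $\|AB\|\le \|A\|_{\rm op}\|B\|$ and $\|A\|_{\rm op}\le\|A\|$. It gives $\|X^k\|\le \|X\|^k$. With $t=\|X\|\le 1/2$, we get
\[
\|R(X)\| \le \sum_{k\ge 2}\frac{t^k}{k} \le \frac12\sum_{k\ge 2} t^k = \frac{t^2}{2(1-t)} \le t^2 \le \frac{t}{2}.
\]
The last two inequalities use $1-t\ge 1/2$ and $t\le 1/2$. In particular, the series converges absolutely, so $\log U$ is well defined.

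Combining these, the upper bound is $\|\log U\|\le t+\|R(X)\|\le \tfrac32 t\le 2t$. The lower bound is $\|\log U\|\ge t-\|R(X)\|\ge t-\tfrac t2=\tfrac t2$. This is exactly the claim.

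There is one point to check, and it is the only possible obstacle. We must confirm that for $U\in G$ with $\|I-U\|\le 1/2$, the series~\eqref{eq:log_of_matrix} really is the inverse of $\exp$ on ${\sf V}$, so that it agrees with the $\log$ used in the paper. This is the standard fact that $\exp(\log U)=U$ whenever $\|I-U\|_{\rm op}<1$, as in~\cite[Chapter~2]{Hall-LieBook}. Here $\|I-U\|_{\rm op}\le\|I-U\|\le 1/2$, so it applies. The paper already takes~\eqref{eq:log_of_matrix} as the explicit formula in this range, so the proof is essentially the norm estimate above.
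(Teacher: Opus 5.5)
Your proof is correct and follows the same route as the paper: expand the series, bound the tail $\sum_{k\ge 2}\|I-U\|^k/k$ by $\|I-U\|/2$ using submultiplicativity of the Hilbert--Schmidt norm, and conclude by the triangle inequality. Your chain $t^2/(2(1-t))\le t^2\le t/2$ is, if anything, tidier than the paper's.

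One caution about your side remark. The identity $\exp(\log U)=U$ only shows that the series is a right inverse of $\exp$; it does not show that it lies in $\mathfrak{g}$ or agrees with $\exp|_{\sf U}^{-1}$. For example, take $G=\{\mathrm{diag}(e^{i\theta},e^{13i\theta})\}$ and $U=\mathrm{diag}(e^{2\pi i/13},1)$. Then $\|I-U\|<1/2$, but the series gives $\mathrm{diag}(2\pi i/13,0)\notin\mathfrak{g}$. So, exactly as in the paper, what is really proved is the estimate for the series. The series coincides with $\exp|_{\sf U}^{-1}$ once $\|I-U\|$ is small enough, and that is the only regime in which the claim is used.
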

\begin{proof}
Since
    \begin{equation*}
		\log (U) = -\sum_{k\ge 1} \frac{(I-U)^k}{k} \, ,
	\end{equation*} 
both inequalities follows from the inequality
\[
\sum_{k=2}^\infty \frac{\| I -U \|^k}{k} \le \|I-U \| \, \sum_{k\ge 1} 2^{-k} \le \frac{\| I-U \| }{2}\, .
\]
\end{proof}
\begin{proof}[Proof of Lemma~\ref{lemma:large_values_for_YMH_field_are_rare}]
    Since $\nu$ is an infinite volume limit with periodic boundary conditions, it is automatically translation invariant. Therefore, by union bounding over the edges from $E(Q_L)\cup \partial Q_L$ and applying Markov's inequality, the lemma reduces to
        \begin{equation}
            \label{eq:pointwise_L_2_estimate_under_nu}
            \bE_\nu  \| I - U_{e^\prime} \|^2 \le C \frac{\log \beta}{\beta}  \, ,
        \end{equation}
        where $e^\prime$ is a symbolic edge from $E(\bZ^d)$. In turn,~\eqref{eq:pointwise_L_2_estimate_under_nu} would follow once we show that
        \begin{equation}
        \label{eq:pointwise_L_2_estimate_under_nu_on_finite_torus}
            \sup_{L\ge 1} \, \bE_{\nu_L}  \| I - V_{e^\prime} \|^2 \le C \frac{\log \beta}{\beta}  \, ,
        \end{equation}
        where we recall that $\nu_L$ is the Yang-Mills-Higgs measure on the torus $\bT_L^d$, given by~\eqref{eq:def_of_YMH_measure_on_torus}. The proof of~\eqref{eq:pointwise_L_2_estimate_under_nu_on_finite_torus} is based on the stability of the Yang-Mills-Higgs action $\mathcal{H}$. Recalling that $\mu$ is the (normalized) Haar measure for the gauge group $G$ and that $n=\text{dim}(G)$, Claim~\ref{claim:taylor_expansion_for_log} yields that 
        \[
        \beta^{-n/2}\lesssim \mu\big( V\in G \, : \, \|I-V \|^{2} \le \beta^{-1} \big) \lesssim \beta^{-n/2} \, ,
        \]
        for all $\beta\ge 1$. Plugging this into the partition function in~\eqref{eq:def_of_YMH_measure_on_torus} yields 
        \begin{align*}
            Z_L^{\sf YMH} &= \int_{G^{E(\bT_L^d)}} e^{-\frac\beta2 \sum_{p\in P(\bT_{L}^d)} \| I - ({\rm d}U)_p \|^2  - \frac{\beta m}{2} \sum_{e\in E(\bT_{L}^d)} \| I - U_e\|^2}   \prod_{e\in E(\bT_L^d)} {\rm d}\mu(U_e) \\ & \ge \int_{G^{E(\bT_L^d)}}  \, e^{-\frac\beta2 \sum_{p\in P(\bT_L^d)} \| I - ({\rm d}U)_p \|^2  - \frac{\beta m}{2} \sum_{e\in E(\bT_L^d)} \| I - U_e\|^2}   \prod_{e\in E(\bT_L^d)} \mathbf{1}_{\{\|I-U_e\|^2 \le \beta^{-1} \}} \, {\rm d}\mu(U_e) \\ & \gtrsim e^{- 8 |P(\bT_{L}^d)|  - \frac{m}{2} |E(\bT_L^d)|} 
            \cdot \big(\beta^{-n/2} \big)^{|E(\bT_L^d)|} \, .
        \end{align*}
        As we assume that $m\in(0,1)$, we get that
        \[
        \bE_{\nu_L}\big[ e^{\beta \mathcal{H}(U)}\big] = \frac{1}{Z_L^{\sf YMH}} \le \big(C\beta^{n/2}\big)^{L^d} \, ,
        \]
        for some $C>0$. With this bound on the exponential moment, Markov's inequality gives that 
        \[
        \nu_L\big(\mathcal{H}(U) \ge t/\beta \big) \le e^{-t} \big(C\beta^{n/2}\big)^{L^d} 
        \]
        for all $t\ge 0$, which in turn show that 
        \begin{align*}
            \beta \, \bE_{\nu_L} \big[\mathcal{H}(U) \big] &= \int_{0}^{\infty} \nu_L\big(\mathcal{H}(U) \ge t/\beta \big) \, {\rm d}t \\ & \le \int_{0}^{\infty} \min\Big\{1, e^{-t} \big(C\beta^{n/2}\big)^{L^d} \Big\}\, {\rm d}t \lesssim L^d \log\big(C\beta^{n/2} \big) \, .
        \end{align*}
        On the other hand, we also have $$ \mathcal{H}(U) \ge \frac{1}{2} \sum_{e\in E(\bT_L^d)} \| I - U_e\|^2 \, , $$ and~\eqref{eq:pointwise_L_2_estimate_under_nu_on_finite_torus} follows from symmetry of the torus. This proves~\eqref{eq:pointwise_L_2_estimate_under_nu} and with that the lemma. 
\end{proof}
\noindent
As a first consequence of Lemma~\ref{lemma:large_values_for_YMH_field_are_rare}, we get a bound on $d_{\sf TV}((1),(2))$, where (1) and (2) are the measures given in Table~\ref{table:table}. This bound immediately translates to a bound on the distance between (3) and (4).

\begin{claim} \label{claim:from_3_to_4}
    Assume that 
    \begin{equation*}
    C L^d \, \frac{\log \beta}{\beta^{\kappa}} \le \frac{1}{10} \, ,     
    \end{equation*}
    where $C$ is the constant on the right-hand side of Lemma~\ref{lemma:large_values_for_YMH_field_are_rare}. Then 
    \[
    d_{\sf TV}\big({\sf L}_\ast \bP_{\cE_2}^{\sf YMH} , {\sf L}_\ast\bP_{\mathcal{E}}^{\sf YMH} \big) \le  2 C L^d \, \frac{\log \beta}{\beta^{2\kappa}} \, .
    \]
\end{claim}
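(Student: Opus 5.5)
The argument is a conditioning estimate: a conditioned measure is close in total variation to a further conditioning on a likely event, and pushing forward under ${\sf L}$ does not increase total variation distance.

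\emph{Reduction to the group level.} Since $d_{\sf TV}$ is non-increasing under measurable maps, it suffices to bound
\[
d_{\sf TV}\big(\bP_{\cE_2}^{\sf YMH},\bP_{\cE}^{\sf YMH}\big).
\]
For any probability measure $\bP$ and event $B$ with $\bP(B)>0$ we have the elementary bound $d_{\sf TV}(\bP,\bP(\cdot\mid B))\le \bP(B^c)$. Indeed, for any $E$,
\[
\bP(E\mid B)-\bP(E)=\frac{\bP(E\cap B)\,\bP(B^c)-\bP(E\cap B^c)\,\bP(B)}{\bP(B)},
\]
and both terms in the numerator lie in $[0,\bP(B^c)\bP(B)]$ after dividing appropriately. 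Hence $|\bP(E\mid B)-\bP(E)|\le \bP(B^c)$. We apply this with $\bP=\bP_{\cE_2}^{\sf YMH}=\nu(\cdot\mid\cE_2)$ and $B=\cE_1$, which gives
\[
d_{\sf TV}\le \nu(\cE_1^c\mid \cE_2)\le \frac{\nu(\cE_1^c)}{\nu(\cE_2)}.
\]

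\emph{Bounding the numerator.} By Lemma~\ref{lemma:large_values_for_YMH_field_are_rare} with exponent $\kappa$,
\[
\nu(\cE_1^c)\le \nu\big(\exists e\in E(Q_L)\cup\partial Q_L:\ \|I-U_e\|\ge \beta^{\kappa-1/2}\big)\le CL^d\frac{\log\beta}{\beta^{2\kappa}}.
\]

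\emph{Bounding the denominator.} The event $\cE_2^c$ concerns only the boundary edges, with threshold $\beta^{\kappa/2-1/2}$. Applying Lemma~\ref{lemma:large_values_for_YMH_field_are_rare} with $\kappa/2$ in place of $\kappa$ gives
\[
\nu(\cE_2^c)\le CL^d\frac{\log\beta}{\beta^{\kappa}}\le \tfrac1{10},
\]
by the hypothesis of the claim. So $\nu(\cE_2)\ge 9/10$, and the total variation distance is at most $\tfrac{10}{9}CL^d\log\beta/\beta^{2\kappa}\le 2CL^d\log\beta/\beta^{2\kappa}$.

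The only subtlety is this last step, which accounts for the form of the hypothesis: the $\beta^{\kappa}$ (rather than $\beta^{2\kappa}$) in the assumption is exactly what Lemma~\ref{lemma:large_values_for_YMH_field_are_rare} produces at exponent $\kappa/2$, and it is needed to keep $\nu(\cE_2)$ bounded away from zero. Everything else is routine.
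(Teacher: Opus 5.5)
Your proposal is correct and follows the paper's proof essentially step for step. You drop ${\sf L}$ by data processing, apply $d_{\sf TV}(\cP,\cP(\cdot\mid B))\le\cP(B^c)$ to $\nu(\cdot\mid\cE_2)$ with $B=\cE_1$, and use Lemma~\ref{lemma:large_values_for_YMH_field_are_rare} at exponents $\kappa$ and $\kappa/2$ to get $\nu(\cE_1^c)\le CL^d\log\beta/\beta^{2\kappa}$ and $\nu(\cE_2)\ge 9/10$; the paper proves the conditioning bound as an equality, but only the inequality you use is needed.
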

\begin{proof}
    Let $\cP$ be any probability measure, $E$ some event with $\cP(E)>0$ and $\cP_E$ be the probability measure conditioned on the event $E$,  then
    \begin{equation}
    \label{eq:bound_on_total_variation_distance_of_conditional_vs_original}
        d_{\sf TV}(\cP,\cP_E) = \cP(E^{c}) \, .
    \end{equation}
    Indeed, for any other event $A$ we have 
    \[
    \cP(A) - \cP_E(A) = \cP(A\cap E^c) + \cP(A\cap E) - \frac{\cP(A\cap E)}{\cP(E)} = \cP(A\cap E^c) - \cP(A\cap E)  \, \frac{\cP(E^c)}{\cP(E)} \, .  
    \]
    Now, it is not hard to check that
    \[
    \max_{\substack{x\in [0, \cP(E)] \\ y\in [0,\cP(E^c)]}} \Big| y- x \,  \frac{\cP(E^c)}{\cP(E)} \Big| \le \cP(E^c) \, ,
    \]
    which gives $d_{\sf TV}(\cP,\cP_E) \le \cP(E^{c})$. The equality~\eqref{eq:bound_on_total_variation_distance_of_conditional_vs_original} now follows from plugging $A= E^c$. Lemma~\ref{lemma:large_values_for_YMH_field_are_rare} implies that 
    \[
    \nu(\cE_2^c) \le C L^{d} \frac{\log \beta}{\beta^\kappa} \le \frac{1}{10} \, ,
    \]
    and 
    \[
    \nu\big(  \cE_1^c \cap \cE_2 \big) \le \nu(\cE_1^c) \le CL^d\frac{\log \beta}{\beta^{2\kappa}} \, .
    \]
    Hence,~\eqref{eq:bound_on_total_variation_distance_of_conditional_vs_original} gives that 
    \[
    d_{\sf TV}\big({\sf L}_\ast \bP_{\cE_2}^{\sf YMH} , {\sf L}_\ast\bP_{\mathcal{E}}^{\sf YMH} \big)  \le d_{\sf TV}\big(\bP_{\cE_2}^{\sf YMH} , \bP_{\mathcal{E}}^{\sf YMH} \big) = \bP_{\cE_2}^{\sf YMH}(\cE_1^c) \le \frac{10}{9} C L^d \frac{\log \beta}{\beta^{2\kappa}} \, .
    \] 
\end{proof}

\subsection{An application of the Baker-Campbell-Hausdorff formula}   
The lattice Yang-Mills-Higgs measure is defined on $G^E$, but we need to analyze the pushforward of this measure under the map ${\sf L}$ from (\ref{eq:def_of_map_L}). The next few lemmas enable us to do this analysis. For a matrix $X\in \mathfrak{g}$, we define ${\sf ad}_X:\mathfrak{g} \to \mathfrak{g}$ as the linear map (usually referred to as the Poisson brackets)
    \begin{equation*}
        {\sf ad}_X(Y) = [X,Y] = XY - YX \, .
    \end{equation*} 
    The Baker-Campbell-Hausdorff formula gives a precise expression for $\log(e^X e^Y)$ when $||X||$ and $||Y||$ are small enough. The first few terms are given by 

    $$
    \log(e^X e^Y) = X + Y + \frac{1}{2}[X, Y] + \frac{1}{12}[X, [X, Y]] - \frac{1}{12}[Y, [X, Y]] + \cdots, 
    $$
    and the full formula along with other ways of representing it can be found in~\cite[Chapter~5]{Hall-LieBook}. When $X$ and $Y$ commute, all commutators in the formula vanish, and we are left with $$\log(e^X e^Y) = X + Y \, .$$ Since we are taking $\beta \to \infty$ at the prescribed rate in Theorem \ref{thm:main_result}, this causes the value of $||\log(U_e)||$ to be small for all $e \in E(Q_L)$. Then the higher order terms in the BCH formula (i.e. the commutators) become increasingly small in norm in the scaling limit.  Thus, this scaling limit regime effectively `abelianizes' the problem. The following proposition, which is a simple application of the BCH formula, shows how to differentiate the exponential map. 
    \begin{proposition}[{\cite[Theorem~5.4]{Hall-LieBook}}]
        \label{prop:derivative_of_exp_map}
        Let $X,Y$ be $N\times N$ matrices with complex entries. Then
        \begin{align*}
            \frac{{\rm d}}{{\rm d}t} \exp(X + tY) \bigg|_{t=0} &= e^X \bigg( \frac{I-e^{-{\sf ad}_X}}{{\sf ad}_X}(Y) \bigg) \\ &= e^X\bigg( Y - \frac{[X,Y]}{2!} + \frac{[X,[X,Y]]}{3!} - \ldots  \bigg)
        \end{align*}
    \end{proposition}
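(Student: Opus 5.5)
The plan is to compute the derivative directly from the power series for the exponential, and then to regroup the resulting terms into iterated commutators by means of an integral representation. First we write
\[
\exp(X+tY) = \sum_{k\ge 0} \frac{(X+tY)^k}{k!}\, .
\]
The series converges absolutely and uniformly for $t$ in a compact set, so it may be differentiated term by term. Since $\frac{{\rm d}}{{\rm d}t}(X+tY)^k\big|_{t=0} = \sum_{j=0}^{k-1} X^j Y X^{k-1-j}$, this gives
\[
\frac{{\rm d}}{{\rm d}t}\exp(X+tY)\Big|_{t=0} = \sum_{k\ge1}\frac{1}{k!}\sum_{j=0}^{k-1}X^jYX^{k-1-j} \, .
\]
It is cleaner to recognise this expression as Duhamel's formula,
\[
\frac{{\rm d}}{{\rm d}t}\exp(X+tY)\Big|_{t=0}=\int_0^1 e^{(1-s)X}\,Y\,e^{sX}\,{\rm d}s \, .
\]
To check this identity, expand both exponentials on the right and use the Beta integral $\int_0^1 (1-s)^a s^b\,{\rm d}s = a!\,b!/(a+b+1)!$. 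Matching coefficients then reproduces the double sum above exactly.

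Next, we factor $e^X$ out on the left, writing $e^{(1-s)X} Y e^{sX} = e^X\big(e^{-sX}Ye^{sX}\big)$. The key identity is
\[
e^{-sX}Ye^{sX} = e^{-s\,{\sf ad}_X}(Y) = \sum_{k\ge0}\frac{(-s)^k}{k!}\,{\sf ad}_X^k(Y) \, .
\]
To prove it, note that both sides are analytic in $s$, equal $Y$ at $s=0$, and satisfy the same linear ODE $\frac{{\rm d}}{{\rm d}s}Z(s) = -[X,Z(s)]$. Alternatively, one can use that conjugation by $e^{-sX}$ equals $\exp(-s\,{\sf ad}_X)$, because left and right multiplication by $X$ commute as operators on matrices.

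Integrating term by term in $s$ over $[0,1]$, which is allowed since the series converges absolutely, gives
\[
\int_0^1 e^{-s\,{\sf ad}_X}(Y)\,{\rm d}s = \sum_{k\ge0}\frac{(-1)^k}{(k+1)!}\,{\sf ad}_X^k(Y) = \frac{I-e^{-{\sf ad}_X}}{{\sf ad}_X}(Y) \, .
\]
Here the operator on the right is defined by its entire power series. Expanding the first few terms yields $Y - [X,Y]/2! + [X,[X,Y]]/3! - \cdots$, which is exactly the claimed formula.

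The main point needing care is the conjugation identity together with the justification of the interchanges of sum, derivative and integral. Both follow from absolute convergence of the matrix exponential series in operator norm on bounded sets, using $\|{\sf ad}_X\| \le 2\|X\|$. No restriction on the size of $X$ or $Y$ is needed, which is consistent with the statement being valid for all matrices.
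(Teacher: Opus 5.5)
Your proof is correct, but it reaches the key integral by a different route from the cited source. The Beta-integral matching does reproduce $\sum_{k\ge1}\frac{1}{k!}\sum_{j=0}^{k-1}X^jYX^{k-1-j}$. Either of your arguments gives $e^{-sX}Ye^{sX}=e^{-s\,{\sf ad}_X}(Y)$. And $\sum_{k\ge0}\frac{(-1)^k}{(k+1)!}{\sf ad}_X^k$ is exactly the power series of $\frac{I-e^{-{\sf ad}_X}}{{\sf ad}_X}$.

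One phrase should be tightened: ``converges uniformly, so it may be differentiated term by term.'' What you need is locally uniform convergence of the \emph{differentiated} series. This follows from the bound $\sum_{k\ge1}\frac{k}{k!}\|Y\|(\|X\|+|t|\|Y\|)^{k-1}=\|Y\|e^{\|X\|+|t|\|Y\|}$, or from Weierstrass' theorem once $t$ is allowed to be complex.

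The paper gives no proof of its own and only cites Hall. Hall's argument, following Tuynman, reaches the integral by a different mechanism:
\begin{itemize}
\item it writes $e^{X+tY}=\big(e^{(X+tY)/m}\big)^m$ for every $m$ and differentiates the $m$ factors by the product rule;
\item it uses $\frac{{\rm d}}{{\rm d}t}e^{(X+tY)/m}\big|_{t=0}=Y/m+O(m^{-2})$ and ${\rm Ad}_{e^{-X/m}}=e^{-{\sf ad}_X/m}$ to rewrite the derivative as $e^{(m-1)X/m}\,\frac1m\sum_{k=0}^{m-1}e^{-k\,{\sf ad}_X/m}(Y)+O(1/m)$;
\item since the left-hand side does not depend on $m$, it lets $m\to\infty$ and recognises the Riemann sum for $e^X\int_0^1 e^{-s\,{\sf ad}_X}(Y)\,{\rm d}s$.
\end{itemize}
From that point the two proofs coincide: same conjugation identity, same term-by-term integration.

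Your route replaces the limiting procedure in $m$ by an exact identity at finite order, so the only analysis needed is absolute convergence of exponential series. The price is that Duhamel's formula must be known in advance and verified by coefficient bookkeeping. The Hall--Tuynman route needs no combinatorics. It also explains where the integral comes from: the derivative appears naturally as an average of conjugates ${\rm Ad}_{e^{-kX/m}}(Y)$, which is why ${\sf ad}_X$ enters the answer at all.
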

    \noindent
    Proposition~\ref{prop:derivative_of_exp_map} allows us to derive the necessary change of variables formula for functions defined in a neighborhood of $I\in G$. 
    \begin{lemma}
       \label{lemma:change_of_variables_lie_group_exponential_coordinates}
        Let $\psi:G\to \bR$ be a measurable function with $\text{\normalfont supp}(\psi) \subset {\sf V}$. Then 
        \[
        \int_{G} \psi(U) \, {\rm d}\mu (U) = \int_{\mathfrak{g}} \psi\big(\exp(X)\big) \, 
        \bigg| \det\Big(\frac{I - e^{-{\sf ad}_X}}{{\sf ad}_X}\Big)\bigg| \, {\rm d} {\tt Leb}(X) \, , 
        \]
        where ${\rm d} {\tt Leb}$ is the $n$-dimensional Lebesgue measure on $\mathfrak{g}$.
    \end{lemma}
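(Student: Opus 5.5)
The plan is to identify the pushforward of the normalized Haar measure $\mu$ under $\log$ on ${\sf V}$ with an explicit density with respect to ${\tt Leb}$ on $\mathfrak{g}$, and then change variables. Since $\exp$ restricted to ${\sf U}$ is a diffeomorphism onto ${\sf V}$, we have $\int_G \psi \, {\rm d}\mu = \int_{\sf U} \psi(\exp X)\, \rho(X)\, {\rm d}{\tt Leb}(X)$, where $\rho(X)$ is the Jacobian comparing $\exp^\ast \mu$ with ${\tt Leb}$. The task is to show $\rho(X) = \big|\det\big(\frac{I-e^{-{\sf ad}_X}}{{\sf ad}_X}\big)\big|$, where the operator acts on $\mathfrak{g}$ (by functional calculus on the entire function $z\mapsto (1-e^{-z})/z$). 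The integral over $\mathfrak{g}$ then restricts automatically to ${\sf U}$ because $\text{supp}(\psi)\subset{\sf V}$ and we read $\psi(\exp X)$ as $\psi$ composed with the chart. More precisely, $\psi\circ \exp$ may be nonzero outside ${\sf U}$ if $\exp$ is not injective globally, so I would state and use the formula with $X$ ranging over ${\sf U}$, or equivalently with $\psi\circ\exp$ replaced by its restriction to ${\sf U}$. This is the intended reading in the paper.

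The first step is to describe Haar measure as a left-invariant volume form. Since $G$ is compact and connected, it is unimodular, and the normalized Haar measure is given by a left-invariant top form $\omega$ with $\omega_g(gY_1,\ldots,gY_n) = c\cdot \det[\langle Y_i,V_j\rangle]$ for $Y_i\in\mathfrak{g}$. Here $\{V_j\}$ is the Hilbert--Schmidt orthonormal basis, and $c>0$ is a normalization constant fixed by $\mu(G)=1$.

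The second step is to pull $\omega$ back by $\exp$ at a point $X\in{\sf U}$. By Proposition~\ref{prop:derivative_of_exp_map}, the differential is $D\exp_X(Y) = e^X \Phi_X(Y)$ with $\Phi_X = \frac{I-e^{-{\sf ad}_X}}{{\sf ad}_X}$. Note that $\Phi_X$ maps $\mathfrak{g}$ to itself, since ${\sf ad}_X$ preserves $\mathfrak{g}$ and $\Phi_X$ is a convergent power series in ${\sf ad}_X$. Hence
\[
(\exp^\ast\omega)_X(V_1,\ldots,V_n) = \omega_{e^X}\big(e^X\Phi_X V_1,\ldots,e^X\Phi_X V_n\big) = c\det(\Phi_X).
\]
Taking absolute values (the orientation is handled by $|\cdot|$) gives $\exp^\ast\mu = c\,|\det\Phi_X|\,{\rm d}{\tt Leb}(X)$ on ${\sf U}$. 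Here ${\tt Leb}$ is Lebesgue measure in the coordinates $\{V_j\}$, which is canonical up to the Hilbert--Schmidt structure.

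The final step, and the only real subtlety, is the constant $c$. The statement as written has no constant, so either ${\tt Leb}$ is implicitly normalized so that $c=1$, or the identity is meant up to a $G$-dependent multiplicative constant that is harmless downstream. This is because it cancels between numerator and partition function in every conditioned measure of Table~\ref{table:table}. I would prove the formula with $c$ and remark that we may absorb $c$ into ${\tt Leb}$. Everything else is the standard change of variables theorem applied to the diffeomorphism $\exp|_{\sf U}$, together with the remark that $\det\Phi_X$ is real because $\Phi_X$ is a real-linear map on the real vector space $\mathfrak{g}$.
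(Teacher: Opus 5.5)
Your proposal is correct and is essentially the paper's argument: both pull back the left-invariant Haar volume through $\exp$ using $D_X\exp(Y)=e^X\Phi_X(Y)$ from Proposition~\ref{prop:derivative_of_exp_map}, with the left translation by $e^X$ contributing no factor (in your version via left-invariance of $\omega$, in the paper's via $|\det e^X|=1$). Your two caveats are also correct, and the paper glosses over both. First, with $\mu(G)=1$ and {\tt Leb} taken in Hilbert--Schmidt coordinates, the identity holds only up to the factor $c=1/\mathrm{vol}_{HS}(G)$ (for instance $1/(2\pi)$ when $G=U(1)$). Second, the $X$-integral must be restricted to ${\sf U}$, since $\exp$ is not injective on $\mathfrak{g}$; for $U(1)$ the unrestricted right-hand side diverges. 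Both corrections are harmless downstream, because only proportional densities supported in ${\sf U}^E$ are used there.
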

    \begin{proof}
        The pair $({\sf V}, \exp)$ defines a local chart on the manifold $G$. By Proposition~\ref{prop:derivative_of_exp_map}, we have that
        \[
        D_X \exp : T_{\exp(X)} G \longrightarrow T_{\exp(X)} G \simeq \mathfrak{g} \, ,
        \]
        is given by
        \[
        D_X \exp (Y) = e^X \Big( \frac{I-e^{-{\sf ad}_X}}{{\sf ad}_X} \Big)(Y) \, .
        \]
        Since $G \subset U(N)$, we know that
        \[
        |\det(e^X)| = 1 \, ,
        \]
        for all $X\in \mathfrak{g}$. Thus, the Haar measure $\mu$ on $G$ (which is the unique volume that is left-invariant) pulled back onto $\mathfrak{g}$ has the Jacobian
        \[
        \Big|\det\Big(\frac{I - e^{-{\sf ad}_X}}{{\sf ad}_X}\Big)\Big| \, ,
        \]
        with respect to the Lebesgue measure on $\mathfrak{g}$;  the corresponding unique left-invariant measure. 
    \end{proof}
    \noindent
    In our proofs we will be integrating over a product Lie group (one copy of $G$ for each edge of the lattice), we have the corresponding result for product Lie groups. The main difference here is that the ${\sf ad}$ operator factors over the product, allowing us to write the determinant of the Jacobian as a product over each factor of the individual determinants.

      \begin{lemma}
    \label{lemma:change_of_variables_product_lie_group}
    Let $E$ be a finite set. Let  $f: G^E \to \bR$ be any bounded measurable function, such that $supp(f) \subset {\sf V}^E$, where ${\sf V}$ is is the domain on which the logarithmic coordinates are defined. Then
    \[
        \int_{G^E} f(\mathbf{U}) \, \prod_{e \in E} {\rm d} \mu_e  (U_e) = \int_{\mathfrak{g}^E} f\big(\exp(\mathbf{X})\big)\, 
         \prod_{e \in E}\Big|\det\Big(\frac{I - e^{-{\sf ad}_{X_e}}}{{\sf ad}_{X_e}}\Big)\Big| \, \prod_{e \in E}{\rm d} {\sf Leb}(X_e).
        \]
    Here, $\mathbf{U} = (U_e)_{e\in E}$ and $\exp(\mathbf{X}) = (\exp(X_e))_{e \in E}$.
    \end{lemma}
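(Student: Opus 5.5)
The plan is to reduce Lemma~\ref{lemma:change_of_variables_product_lie_group} to the single-factor statement, Lemma~\ref{lemma:change_of_variables_lie_group_exponential_coordinates}, using Fubini together with the fact that the Haar measure on $G^E$ is the product measure $\bigotimes_{e\in E}\mu_e$.

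\textbf{Step 1 (product test functions).} Take $f(\mathbf U)=\prod_{e\in E}\psi_e(U_e)$ with each $\psi_e$ bounded, measurable and supported in ${\sf V}$. Both sides then factor: the left side by the product structure of $\prod_e{\rm d}\mu_e$, and the right side because the Jacobian density is itself a product over $e$. Applying Lemma~\ref{lemma:change_of_variables_lie_group_exponential_coordinates} to each factor proves the identity for such $f$. As a consistency check, this matches the intrinsic description. On the Lie algebra $\mathfrak g^E$ of $G^E$, the adjoint acts diagonally: ${\sf ad}_{\mathbf X}=\bigoplus_e {\sf ad}_{X_e}$. So $\frac{I-e^{-{\sf ad}_{\mathbf X}}}{{\sf ad}_{\mathbf X}}$ is block diagonal, and its determinant is the product of the blockwise determinants. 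Hence Proposition~\ref{prop:derivative_of_exp_map} applied to the product group gives the same density.

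\textbf{Step 2 (extension to general $f$).} Let $\mathcal M$ be the class of bounded measurable $f$ supported in ${\sf V}^E$ for which the identity holds. Since both sides are integrals against $\sigma$-finite measures, namely $\prod\mu_e$ on ${\sf V}^E$ and the pushforward of the weighted Lebesgue measure on $\log({\sf V})^E$ under $\exp$, they agree on the $\pi$-system of measurable rectangles $\prod_e B_e$ with $B_e\subset{\sf V}$. By the $\pi$–$\lambda$ theorem the two measures agree on all Borel subsets of ${\sf V}^E$. Standard simple-function approximation and dominated convergence then handle bounded $f$. Integrability is not an issue: $\mu$ is a probability measure, and on the relevant region the Jacobian is bounded because ${\sf U}$ can be taken relatively compact.

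\textbf{Main obstacle.} The only delicate point is well-definedness: the right-hand integral must be restricted to $\mathbf X\in{\sf U}^E$, where $\exp$ is a diffeomorphism onto ${\sf V}^E$. Otherwise non-injectivity of $\exp$ on all of $\mathfrak g$ would overcount. The single-factor lemma already carries this implicit convention, and the product version inherits it factor by factor. With that convention in place, the argument is a routine Fubini extension.
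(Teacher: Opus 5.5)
Your proof is correct and follows essentially the paper's route: the paper also reduces to the single-factor Lemma~\ref{lemma:change_of_variables_lie_group_exponential_coordinates} via the product Haar measure and Fubini, using the block-diagonal structure of ${\sf ad}_{\mathbf X}$ to factor the Jacobian, and your $\pi$--$\lambda$ extension just spells out measure theory that iterated Fubini on sections of $f$ would handle directly. Your remark that the right-hand integral must be restricted to ${\sf U}^E$ is a genuine clarification the paper leaves implicit, since $\exp$ is not injective on $\mathfrak{g}$ and the literal integral over $\mathfrak{g}^E$ would overcount.
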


    \begin{proof}
        Below we list some standard facts about compact Lie groups and their corresponding Lie algebras. All these facts are simple to prove; we refer the reader to the book~\cite{Bourbaki}. The facts we will use are as follows:
        \begin{enumerate}
            \item If $G$ is a Lie group with Lie algebra $\mathfrak{g}$, then the direct product $G^E$ is a Lie group with Lie algebra $\mathfrak{g}^E =  \bigoplus_{e \in E}\mathfrak{g}_e$; see \cite[Chapter 3, Section 3.8]{Bourbaki}.
            \vspace{1mm}
            
            \item If $\mu$ is the Haar probability measure on $G$, then $\mu^{\otimes E}$ is the Haar probability measure on $G^E$. This follows from uniqueness of the Haar measure and a direct inspection.

            \vspace{1mm}
            
            \item The ${\sf ad}$ operator on $\mathfrak{g}^E$ acts component-wise. In other words, for $\mathbf{X}, \mathbf{Y} \in \mathfrak{g}^E$ we have $${\sf ad}_{\mathbf{X}}(\mathbf{Y}) = (X_eY_e - Y_eX_e)_{e \in E} \, .$$ See~\cite[Chapter 1, Section 1.8]{Bourbaki} for the proof.
        \end{enumerate}
        We conclude that ${\sf ad}_{\mathbf{X}}$ is a block diagonal operator on $\mathfrak{g}^E$, and hence so is $$\frac{I - e^{-{\sf ad}_{\mathbf{X}}}}{{\sf ad}_{\mathbf{X}}} \, .$$ The corresponding determinant factors over the diagonal blocks, and we have
        \[
        \det \left( \frac{I - e^{-{\sf ad}_{\mathbf{X}}}}{{\sf ad}_{\mathbf{X}}}\right) = \prod_{e \in E}\det\Big(\frac{I - e^{-{\sf ad}_{X_e}}}{{\sf ad}_{X_e}}\Big).
        \]
        Applying Lemma \ref{lemma:change_of_variables_lie_group_exponential_coordinates} with the function $f(\mathbf{U})$ and Fubini proves the lemma.
    \end{proof}

\subsection{Density comparison on $\mathfrak{g}$}
To compare the total variation distance between (5) and (6), we shall appeal to a general bound for the total variation distance between Gibbs measures, given by the next claim. 
\begin{claim} \label{claim:total_variation_bound_between_gibbs_measures}
    For $m\ge 1$, suppose that $\varphi_1,\varphi_2: \bR^m \to \bR_{\ge 0}$ are two integrable functions. Let $\mu_1$ and $\mu_2$ be the probability measures on $\bR^m$ obtained by normalizing $\varphi_1$ and $\varphi_2$ by their integrals, respectively. Then
    \[
    d_{\sf TV}(\mu_1,\mu_2) \le \frac{\| \varphi_1 - \varphi_2 \|_1}{\max\big\{ \|\varphi_1\|_1, \,  \|\varphi_2\|_1 \big\}} \, ,
    \]
    where $\|\cdot \|_1$ denotes the $L^1$ norm on $\bR^m$ with respect to the Lebesgue measure. 
\end{claim}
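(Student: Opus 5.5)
Without loss of generality we assume $\|\varphi_1\|_1 \ge \|\varphi_2\|_1$; otherwise we swap the roles of $\varphi_1$ and $\varphi_2$, which changes neither side of the claimed inequality. Write $Z_i = \|\varphi_i\|_1$, so that $\mu_i$ has density $\varphi_i/Z_i$ with respect to Lebesgue measure. We also assume $0<Z_i<\infty$, which is implicit in the statement since the $\mu_i$ are defined by normalizing. Recall the standard identity for measures with densities:
\[
d_{\sf TV}(\mu_1,\mu_2) = \frac12 \int_{\bR^m} \Big| \frac{\varphi_1}{Z_1} - \frac{\varphi_2}{Z_2}\Big| \, {\rm d}x .
\]
It follows from the definition, since the supremum over Borel sets is attained at $\{\varphi_1/Z_1 > \varphi_2/Z_2\}$ and the two normalized densities have equal total mass.

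The key step is to insert the intermediate function $\varphi_2/Z_1$ and apply the triangle inequality:
\[
\int \Big| \frac{\varphi_1}{Z_1} - \frac{\varphi_2}{Z_2}\Big| \le \frac{1}{Z_1}\int |\varphi_1-\varphi_2| + \Big|\frac{1}{Z_1}-\frac{1}{Z_2}\Big| \int \varphi_2 .
\]
The second term equals $\frac{|Z_2-Z_1|}{Z_1Z_2}\, Z_2 = \frac{|Z_1-Z_2|}{Z_1}$. Since $|Z_1 - Z_2| = \big|\int(\varphi_1-\varphi_2)\big| \le \|\varphi_1-\varphi_2\|_1$, the second term is also at most $\|\varphi_1-\varphi_2\|_1/Z_1$. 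The total integral is therefore at most $2\|\varphi_1-\varphi_2\|_1/Z_1$. Multiplying by $\tfrac12$ gives
\[
d_{\sf TV}(\mu_1,\mu_2) \le \frac{\|\varphi_1-\varphi_2\|_1}{Z_1},
\]
and $Z_1 = \max\{\|\varphi_1\|_1,\|\varphi_2\|_1\}$ by our normalization, which is the claimed bound.

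There is no real obstacle here. The only point requiring care is the choice of which normalization to pull out in the triangle inequality: dividing by the larger of the two masses is what produces the maximum in the denominator.
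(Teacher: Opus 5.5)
Your proof is correct and follows the paper's argument: the $L^1$ formula for $d_{\sf TV}$, the triangle inequality after inserting $\varphi_2/\|\varphi_1\|_1$, and the bound $\bigl|\|\varphi_1\|_1-\|\varphi_2\|_1\bigr| \le \|\varphi_1-\varphi_2\|_1$, with your WLOG choice of the larger normalization playing the role of the paper's ``symmetric argument.'' You also keep the absolute value on $1/Z_1-1/Z_2$, which the paper's display silently drops.
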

\begin{proof}
    For the proof, denote by ${\sf m}$ the Lebesgue measure on $\bR^m$. From the standard $L^1$ characterization of the total variation distance for absolutely continuous probability measures, we have 
    \begin{equation} \label{eq:total_variation_distance_l_1_formula}
        d_{\sf TV}(\mu_1,\mu_2) = \frac{1}{2} \, \bigg\| \frac{\varphi_1}{\int_{\bR^m} \varphi_1 \, {\rm d}{\sf m}} - \frac{\varphi_2}{\int_{\bR^m} \varphi_2 \, {\rm d}{\sf m}} \bigg\|_1 \, .
    \end{equation}
    By the triangle inequality, we have
    \begin{align*}
        \int_{\bR^m} \bigg| \frac{\varphi_1}{\| \varphi_1 \|_1} - \frac{\varphi_2}{\| \varphi_2 \|_1}\bigg| \, {\rm d}{\sf m} & \le \int_{\bR^m}  \frac{|\varphi_1 - \varphi_2|}{\| \varphi_1 \|_1}  \, {\rm d}{\sf m} + \int_{\bR^m} |\varphi_2| \Big( \frac{1}{\| \varphi_1 \|_1} - \frac{1}{\| \varphi_2 \|_1} \Big) \, {\rm d}{\sf m} \\ & = \frac{\| \varphi_1 - \varphi_2 \|_1}{\| \varphi_1 \|_1} + \frac{\| \varphi_2 \|_1 - \| \varphi_1 \|_1}{\| \varphi_1 \|_1 \cdot \| \varphi_2 \|_1} \cdot \| \varphi_2 \|_1 \\ & \le 2\frac{\| \varphi_1 - \varphi_2 \|_1}{\| \varphi_1 \|_1} \, .
    \end{align*}
    In view of~\eqref{eq:total_variation_distance_l_1_formula}, we get that
    \[
    d_{\sf TV}(\mu_1,\mu_2) \le \frac{\| \varphi_1 - \varphi_2 \|_1}{ \|\varphi_1\|_1} \, , 
    \]
    and a symmetric argument yields the claim.
\end{proof}
\noindent
We also include a few simple claims, which will help us compare the pushforward of the Yang-Mills-Higgs probability density function with the Proca field density. All these claims follow from simple matrix-valued calculus.
\begin{claim}\label{claim:jacobian_of_change_of_variables_close_to_one}
    For all $X\in \mathfrak{g}$ with $\| X\|\le 1$ we have
    \[
    \Big| \, \Big| \det\Big(\frac{I - e^{-{\sf ad}_{X_e}}}{{\sf ad}_{X_e}}\Big)\Big| - 1 \Big| \le C \| X \| \, .
    \]
\end{claim}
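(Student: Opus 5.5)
We view $T_X = \frac{I - e^{-{\sf ad}_X}}{{\sf ad}_X}$ as a linear operator on the $n$-dimensional real vector space $\mathfrak{g}$, defined by its power series, and show it is a small perturbation of the identity when $\|X\|\le 1$. (Here $X_e$ in the statement is just $X$.) Since $G\subset U(N)$, every $X\in\mathfrak{g}$ is skew-Hermitian, and ${\sf ad}_X$ maps $\mathfrak{g}$ to itself. Thus $T_X$ is a genuine linear map on $\mathfrak{g}\cong\bR^n$, and its determinant is that of an $n\times n$ real matrix.

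The first step is an operator norm bound on ${\sf ad}_X$. With respect to the Hilbert--Schmidt norm we have $\|[X,Y]\| \le \|XY\|+\|YX\| \le 2\|X\|\,\|Y\|$, using submultiplicativity of the Hilbert--Schmidt norm. Hence $\|{\sf ad}_X\|_{\rm op}\le 2\|X\|\le 2$.

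The second step expands the series
\[
T_X = \sum_{k\ge 0} \frac{(-1)^k}{(k+1)!}\,{\sf ad}_X^k = I + R_X ,
\]
which gives
\[
\|R_X\|_{\rm op} \le \sum_{k\ge1}\frac{(2\|X\|)^k}{(k+1)!} \le \|X\|\sum_{k\ge 1}\frac{2^k}{(k+1)!} \le C_0\|X\|
\]
for $\|X\|\le 1$, with $C_0 = (e^2-3)/2$.

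The third step passes from an operator bound to a determinant bound. Write $\det(I+R_X)=\prod_{i=1}^n(1+\lambda_i)$, where the $\lambda_i$ are the complex eigenvalues of $R_X$, each satisfying $|\lambda_i|\le \|R_X\|_{\rm op}\le C_0\|X\|=:r$. Then
\[
|\det(I+R_X)-1| \le (1+r)^n-1 \le n r (1+r)^{n-1} \le n(1+C_0)^{n-1}C_0\|X\| ,
\]
using $r\le C_0$. By the reverse triangle inequality, $\big||\det T_X|-1\big|\le |\det T_X - 1|$, which yields the claim with $C=nC_0(1+C_0)^{n-1}$, depending only on $n$.

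We expect no real obstacle. The one point needing care is that the determinant is taken on $\mathfrak{g}$ and not on matrix space, which is why invariance of $\mathfrak{g}$ under ${\sf ad}_X$ must be noted. The only other point is controlling the determinant by the operator norm uniformly for $\|X\|\le1$, where the simple product-of-eigenvalues bound suffices.
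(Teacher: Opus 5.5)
Your proof is correct and follows essentially the same route as the paper. Both bound $\|{\sf ad}_X\|_{\rm op}\le 2\|X\|$, expand the series to get $\|T_X - I\|_{\rm op}\lesssim \|X\|$, and then control the determinant. The only difference is the last step: you use the product-of-eigenvalues bound, whereas the paper uses the Jacobi formula $D_I\det = \Tr$ together with Taylor's theorem. Your version is slightly more careful, because the paper's displayed inequality $\big||\det T_X|-1\big|\le|\Tr(T_X-I)|$ drops the second-order Taylor remainder, which is harmless here since $\|X\|\le 1$.
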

\begin{proof}
    Clearly $\det(I) = 1$. Furthermore, we know by the Jacobi formula that for any $N\times N$ matrix $H$ we have
        \[
        D_I \det (H) = \text{Tr}(H) \, .
        \]
        Therefore, Taylor's theorem implies that
        \[
        \Big| \, \Big|\det\Big(\frac{I - e^{-{\sf ad}_X}}{{\sf ad}_{X}}\Big)\Big| - 1  \Big| \le \Big| \text{Tr} \Big(\frac{I - e^{-{\sf ad}_X}}{{\sf ad}_{X}} - I \Big) \Big| \, .
        \]
        Letting $\| \cdot \|_{\text{op}}$ denote the operator norm of a matrix, we always have $\|{\sf ad}_X \|_{\text{op}} \le 2 \| X \|$. In particular
        \[
        \Big\| \frac{I - e^{-{\sf ad}_X}}{{\sf ad}_{X}} - I\Big\|_{\text{op}} \lesssim \| X\| \, ,
        \]
        and conclude by noting that $|\text{Tr}(H)|\le n \| H\|_{\text{op}}$ for any matrix $H$. 
\end{proof}
\begin{claim}\label{claim:taylor_expansion_for_exponential_of_matrix}
For all $X\in \mathfrak{g}$ with $\| X\|\le 1$ we have
\[
\| \exp(X) - I - X \| \le C\| X \|^2 \, .
\]
\end{claim}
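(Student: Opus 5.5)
The plan is to estimate $\exp(X)-I-X$ directly from the power series~\eqref{eq:exp_of_matrix}. The left-hand side is exactly the tail $\sum_{k\ge 2} X^k/k!$. The Hilbert--Schmidt norm is submultiplicative, $\|AB\|\le \|A\|\,\|B\|$ (this is the Cauchy--Schwarz inequality applied entrywise), so $\|X^k\|\le \|X\|^k$ for every $k\ge 1$. The triangle inequality, applied to the absolutely convergent series, therefore gives
\[
\|\exp(X)-I-X\| \le \sum_{k\ge 2}\frac{\|X\|^k}{k!} = \|X\|^2\sum_{k\ge 2}\frac{\|X\|^{k-2}}{k!}\, .
\]

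The key step is to use the hypothesis $\|X\|\le 1$. It gives $\|X\|^{k-2}\le 1$ for all $k\ge 2$, so the remaining sum is at most $\sum_{k\ge 2} 1/k! = e-2<1$. We may therefore take $C=1$, which is in particular independent of $G$ and $N$.

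There is no real obstacle here. The only point worth checking is that submultiplicativity holds for the Hilbert--Schmidt norm and not just for the operator norm, and this is standard. The fact that $X\in\mathfrak{g}$ plays no role: the estimate holds for any $N\times N$ matrix with $\|X\|\le 1$.
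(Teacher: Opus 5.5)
Your proof is correct and is essentially the paper's argument: write $\exp(X)-I-X=\sum_{k\ge 2}X^k/k!$, apply the triangle inequality and submultiplicativity of the Hilbert--Schmidt norm, then use $\|X\|\le 1$ to bound the scalar series by a multiple of $\|X\|^2$. Your explicit constant ($C=1$, since $\sum_{k\ge 2}1/k! = e-2<1$) is a harmless sharpening of the paper's $\lesssim$.
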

\begin{proof}
The Hilbert-Schmidt norm is sub-multiplicative, so we can use~\eqref{eq:exp_of_matrix} and bound as
\[
\Big\| \sum_{k=2}^\infty \frac{X^k}{k!} \Big\| \le \sum_{k=2}^\infty \frac{\| X\|^k}{k!} \lesssim \|X \|^2 \, .
\]
\end{proof}
\begin{claim} \label{claim:identity_minus_product_of_exp_is_quadratic}
    For $X_1,\ldots,X_4\in \mathfrak{g}$ we denote by $ \displaystyle M = \max_{1\le i \le 4} \| X_i\|$. Assuming that $M < 1$, we have
    \vspace{-3mm}
    \[
    \big\| I - \prod_{i=1}^4 \exp(X_i) \big\|^2 = \big\| X_1 + X_2 + X_3 + X_4 \big\|^2 + O\big(M^3\big) \, .
    \]
\end{claim}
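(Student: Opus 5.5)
Write $P = \prod_{i=1}^4 \exp(X_i)$ and $S = X_1+X_2+X_3+X_4$. The plan is to show $P = I + S + R$ with $\|R\| = O(M^2)$. Then we expand the square.

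\emph{Expanding the product.} By Claim~\ref{claim:taylor_expansion_for_exponential_of_matrix}, each factor is $\exp(X_i) = I + X_i + E_i$ with $\|E_i\|\le C\|X_i\|^2 \le CM^2$. Multiplying the four factors out gives
\[
P = I + S + \sum_i E_i + (\text{products of at least two factors from } \{X_i, E_i\}).
\]
By sub-multiplicativity of the Hilbert--Schmidt norm, each such product has norm at most a constant times $M^2$, since $M<1$ and each $X_i$ or $E_i$ has norm $\lesssim M$. There are finitely many such terms, so $\|R\|\le C'M^2$. Consequently $I - P = -(S + R)$.

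\emph{Squaring.} We compute
\[
\|I-P\|^2 = \|S\|^2 + 2\,\mathrm{Re}\langle S, R\rangle + \|R\|^2 .
\]
Cauchy--Schwarz gives $|\langle S,R\rangle| \le \|S\|\,\|R\| \le 4M\cdot C'M^2 = O(M^3)$. Also $\|R\|^2 = O(M^4) = O(M^3)$ because $M<1$. This yields the claimed identity.

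\emph{Signs.} The statement is written with $X_1+X_2+X_3+X_4$. For plaquettes the relevant product is $U_{e_1}U_{e_2}U_{e_3}^{-1}U_{e_4}^{-1}$, which corresponds to taking $X_3\mapsto -X_{e_3}$ and $X_4 \mapsto -X_{e_4}$. The claim applies verbatim in that case since $\exp(-X)=\exp(X)^{-1}$, and the norms are unchanged.

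There is no real obstacle. The only care needed is to note that the commutator (BCH) corrections enter only at order $M^2$ inside $R$. They therefore contribute only $O(M^3)$ after squaring, because they are paired against $S$, which has size $O(M)$. The constants depend only on $N$ through the finitely many terms of the expansion.
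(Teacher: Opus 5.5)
Your proposal is correct and follows essentially the same route as the paper: expand the product of the four exponentials, isolate the linear part $X_1+X_2+X_3+X_4$, bound the remainder by $O(M^2)$ using sub-multiplicativity of the Hilbert--Schmidt norm and $M<1$, and then square. Your version is slightly more explicit than the paper's, which expands the full multi-index power series in one step and leaves the cross term implicit; you bound the cross term $2\,\mathrm{Re}\langle S,R\rangle$ by Cauchy--Schwarz and keep the correct sign $I-P=-(S+R)$.
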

\begin{proof}
    We use the power series form of the matrix exponential, and note that all series are absolutely convergent. We have
    \begin{align*}
        \big\| I - \prod_{i=1}^4 \exp(X_i) \big\|^2 &= \bigg\| I - \prod_{i=1}^4 \sum_{j=0}^{\infty}\frac{X_i^j}{j!} \bigg\|^2 \\
        &= \bigg\| I - \sum_{j=0}^{\infty} \sum_{j_1 + j_2 + j_3+ j_4 = j}\frac{X_1^{j_1}X_2^{j_2}X_3^{j_3}X_4^{j_4}}{j_1! j_2! j_3! j_4!} \bigg\|^2 \\
        &= \bigg\|  X_1 + X_2 + X_3 + X_4 - \sum_{j=2}^{\infty} \sum_{j_1 + j_2 + j_3+ j_4 = j}\frac{X_1^{j_1}X_2^{j_2}X_3^{j_3}X_4^{j_4}}{j_1! j_2! j_3! j_4!} \bigg\|^2 \\
        &= \big\|X_1 + X_2 + X_3 + X_4\big\|^2 + O(M^3),
    \end{align*}
    where in the last line we used the fact that the Hilbert-Schmidt norm is sub-multiplicative and the fact that $M < 1$.
\end{proof}
\noindent
We are finally ready to state and prove our density comparison lemma. 
\begin{lemma} \label{lemma:from_4_to_5}
    Let $\boldsymbol{\gamma} = (\gamma_e)_{e\in \partial Q_L}$ with $\gamma_e\in G$ be a configuration such that 
    \begin{equation}
        \label{eq:good_boundary_conditions_in_comparison_lemma}
        \| I - \gamma_e \| \le \beta^{\kappa/2 - 1/2} \, .
    \end{equation}    
    Let $\boldsymbol{\gamma}^\prime = (\gamma_e^\prime)$ be another such configuration, and denote by $\boldsymbol{\eta}^\prime = (\eta_e^\prime)$ where $\eta_e^\prime = \log(\gamma_e^\prime) \in \mathfrak{g}$. Then
    \[
    \sup_{\boldsymbol\gamma , \boldsymbol{\gamma}^\prime} \, d_{\sf TV}\big( {\sf L}_\ast\big(\bP_{\boldsymbol{\gamma}}^{\sf YMH} \cdot \mathbf{1}_{\mathcal{E}}\big) \, , \,  \bP_{\boldsymbol{\eta}^\prime}^{{\mathfrak{g}}} \cdot \mathbf{1}_{\mathcal{A}} \big) \le C L^{d} \beta^{\kappa-1/2}
    \]
    where the supremum is taken over all possible configurations satisfying~\eqref{eq:good_boundary_conditions_in_comparison_lemma}. 

\end{lemma}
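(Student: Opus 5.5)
The plan is to compare the two restricted measures through their densities on the interior coordinates $\mathfrak{g}^{E(Q_L)}$, with the boundary coordinates frozen. Both ${\sf L}_\ast\bigl(\bP_{\boldsymbol{\gamma}}^{\sf YMH}\cdot\mathbf{1}_{\mathcal{E}}\bigr)$ and $\bP^{\mathfrak{g}}_{\boldsymbol{\eta}'}\cdot\mathbf{1}_{\mathcal{A}}$ are then absolutely continuous with respect to $\prod_e {\tt Leb}(X_e)$.

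By Lemma~\ref{lemma:change_of_variables_product_lie_group}, and since every edge in $\mathcal{E}$ lies in ${\sf V}$, the first measure has unnormalized density
\[
\varphi_1(\mathbf{X})=\exp\bigl(-\beta\mathcal{H}(\exp\mathbf{X},\boldsymbol{\gamma})\bigr)\prod_{e}\Bigl|\det\Bigl(\tfrac{I-e^{-{\sf ad}_{X_e}}}{{\sf ad}_{X_e}}\Bigr)\Bigr|\,\mathbf{1}_{\mathcal{A}}(\mathbf{X}),
\]
normalized by $Z^{\sf YMH}_{\boldsymbol{\gamma}}$. The second has $\varphi_2(\mathbf{X})=\exp(-\beta S(\mathbf{X},\boldsymbol{\eta}'))\mathbf{1}_{\mathcal{A}}(\mathbf{X})$, normalized by $Z^{\mathfrak{g}}_{\boldsymbol{\eta}'}$. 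Because the measures are sub-probability measures, I will use the same $L^1$ computation as in Claim~\ref{claim:total_variation_bound_between_gibbs_measures}, but with the full partition functions as normalizers. I will first reduce to the matched case: insert the intermediate Proca measure $\bP^{\mathfrak{g}}_{\boldsymbol{\eta}}\cdot\mathbf{1}_{\mathcal{A}}$ with $\boldsymbol{\eta}=\log\boldsymbol{\gamma}$ and use the triangle inequality.

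\textbf{Matched step ($\boldsymbol{\eta}=\log\boldsymbol{\gamma}$).} On $\mathcal{A}$ every interior edge has $\|X_e\|\le 2\beta^{\kappa-1/2}=:M$ by Claim~\ref{claim:taylor_expansion_for_log}, and every boundary edge has norm at most $2\beta^{\kappa/2-1/2}\le M$. Write $\varphi_1=\varphi_2\, e^{-\Delta}J$, where $\Delta=\beta(\mathcal{H}-S)$ and $J$ is the Jacobian product. I will bound the three pieces separately.
\begin{itemize}
\item \emph{Jacobian.} Since ${\rm Tr}\,{\sf ad}_X=0$, the linear term in Claim~\ref{claim:jacobian_of_change_of_variables_close_to_one} vanishes and $|J_e-1|\lesssim\|X_e\|^2$. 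In any case Claim~\ref{claim:jacobian_of_change_of_variables_close_to_one} gives $|J_e-1|\lesssim M$ per edge.
\item \emph{Mass term.} For anti-Hermitian $X$, $\|I-e^X\|^2=2{\rm Re}\,{\rm Tr}(I-e^X)$. Here ${\rm Tr}\,X$ and ${\rm Tr}\,X^3$ are purely imaginary, so $\|I-e^X\|^2=\|X\|^2+O(\|X\|^4)$. Each edge therefore contributes $\beta m\,O(M^4)\lesssim\beta^{4\kappa-1}$.
\item \emph{Plaquette term.} I will refine Claim~\ref{claim:identity_minus_product_of_exp_is_quadratic} via BCH. Write the plaquette holonomy as $e^{Y}$ with $Y=s+C$, where $s=({\sf d}X)_p$ and $C$ is a sum of commutators with $\|C\|\lesssim M^2$. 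The same trace argument gives $\|I-e^Y\|^2=\|Y\|^2+O(M^4)$, so the plaquette error is $\beta\bigl(2\langle s,C\rangle+O(M^4)\bigr)$.
\end{itemize}
Uniformly on $\mathcal{A}$ this gives $|\Delta|\lesssim L^d\beta M^3$. That is only useful as a crude a priori bound making $e^{|\Delta|}$ harmless per local term, provided $\kappa<1/6$. The sharp estimate comes from averaging rather than a sup bound: $\|\varphi_1-\varphi_2\|_1\le\int\varphi_2\,|e^{-\Delta}J-1|$. Expanding $|e^{-\Delta}J-1|\lesssim\sum_e|J_e-1|+\sum_p\beta|\langle s_p,C_p\rangle|+(\text{quartic terms})$ reduces everything to expectations under the Gaussian measure $\bP^{\mathfrak{g}}_{\boldsymbol{\eta}}\cdot\mathbf{1}_{\mathcal{A}}$.

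The key input is that $s_p=({\sf d}X)_p$ has conditional variance at most $\beta^{-1}$ under the plaquette weight $e^{-\beta\|s_p\|^2/2}$, while $\|C_p\|\le M^2$ on $\mathcal{A}$. This yields $\beta\,\bE|\langle s_p,C_p\rangle|\lesssim\beta^{1/2}M^2$. Summing over the $O(L^d)$ plaquettes and edges gives the $CL^d\beta^{\kappa-1/2}$ form, after adjusting $\kappa$ in the definition of $\mathcal{E}_1$ if needed. Finally, $Z^{\sf YMH}_{\boldsymbol{\gamma}}$ is compared with $Z^{\mathfrak{g}}_{\boldsymbol{\eta}}$ by the same $L^1$ estimate together with the mass of $\mathcal{A}^c$.

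\textbf{Mismatched boundary step.} I expect this to be the main obstacle: comparing $\bP^{\mathfrak{g}}_{\boldsymbol{\eta}}\cdot\mathbf{1}_{\mathcal{A}}$ with $\bP^{\mathfrak{g}}_{\boldsymbol{\eta}'}\cdot\mathbf{1}_{\mathcal{A}}$ uniformly in $\|\eta_e-\eta'_e\|\le 4\beta^{\kappa/2-1/2}$. The boundary enters $S$ only through the plaquettes meeting $\partial Q_L$, in which it appears linearly. So the two Gaussians differ by a deterministic mean shift $h$, the discrete Proca harmonic extension of $\boldsymbol{\eta}-\boldsymbol{\eta}'$, and share the same covariance.

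A naive Pinsker or Gaussian-shift bound costs $\beta\cdot(\text{energy of }h)$, roughly $L^{d-1}\beta^{\kappa}$, which is not small. I would first try to exploit the truncation to $\mathcal{A}$, which caps $\|X_e\|\le M$ near the boundary so that the boundary-plaquette mismatch is bounded pointwise. If that fails, the fallback is to accept the lemma only in the form actually used downstream, i.e.\ effectively $\boldsymbol{\gamma}'=\boldsymbol{\gamma}$. In that form the mismatch is instead absorbed in Section~\ref{sec:cont_proca_as_lattice_spacing_shrinks} by the statement that small boundary perturbations do not affect the law of $Z^{\mathfrak{g},\eps}(F)$.
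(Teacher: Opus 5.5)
There is a genuine gap in your mismatched-boundary step, and it cannot be closed. Uniformly over independent admissible $\boldsymbol\gamma,\boldsymbol\gamma'$ the statement is false, and the paper's proof breaks at exactly this point. Your matched step is the paper's argument: change of variables via Lemma~\ref{lemma:change_of_variables_product_lie_group}, comparison of $\beta\mathcal H(\exp X)$ with $\beta S(X)$ and of the Jacobian with $1$ on $\cA$, then Claim~\ref{claim:total_variation_bound_between_gibbs_measures}. You add sharper local expansions.

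The paper handles the mismatch in one line. It asserts $|Z^{\mathfrak g}_{\boldsymbol\eta}-Z^{\mathfrak g}_{\boldsymbol\eta'}|\lesssim Z^{\mathfrak g}_{\boldsymbol\eta}L^{d-1}\beta^{\kappa-1/2}$ and inserts this into Claim~\ref{claim:total_variation_bound_between_gibbs_measures}. That claim, however, needs the $L^1$ distance between $e^{-\beta S(\cdot\,;\boldsymbol\eta)}\mathbf 1_{\cA}$ and $e^{-\beta S(\cdot\,;\boldsymbol\eta')}\mathbf 1_{\cA}$, not the difference of their masses. The partition-function bound is also not true in general: completing the square shows $\log Z^{\mathfrak g}_{\boldsymbol\eta}$ equals a constant minus $\tfrac\beta2$ times a Schur-complement quadratic form in $\boldsymbol\eta$.

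Your mean-shift picture is correct and in fact decisive. Let $A^{\circ\circ},A^{\circ\partial}$ be the interior and interior--boundary blocks of the quadratic form in $S$, and set $\boldsymbol\zeta=\boldsymbol\eta-\boldsymbol\eta'$. The two laws are Gaussians with common covariance $(\beta A^{\circ\circ})^{-1}$, and their means differ by $(A^{\circ\circ})^{-1}A^{\circ\partial}\boldsymbol\zeta$. Hence the Mahalanobis distance satisfies
\[
D^2=\beta\,\big\langle A^{\circ\partial}\boldsymbol\zeta,(A^{\circ\circ})^{-1}A^{\circ\partial}\boldsymbol\zeta\big\rangle\ \ge\ \frac{\beta}{m+C_d}\,\big\|A^{\circ\partial}\boldsymbol\zeta\big\|^2 ,
\]
with $C_d$ as in the proof of Lemma~\ref{lemma:lattice_Proca_is_massive}. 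Take admissible data differing by $\asymp\beta^{\kappa/2-1/2}$ on a sparse set of $\asymp L^{d-1}$ boundary edges. As soon as the boundary enters the action at all (the premise of Lemma~\ref{lemma:conditioning_expectation_bound}), this gives $D^2\gtrsim L^{d-1}\beta^{\kappa}\to\infty$. So the total variation distance tends to $1$, while the claimed bound tends to $0$.

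This also rules out your first idea of exploiting the truncation to $\cA$. The cutoff $\beta^{\kappa-1/2}$ is far above the shift $\beta^{\kappa/2-1/2}$, and $\cA^c$ carries negligible mass under both laws.

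Your fallback is the right repair.
\begin{itemize}
\item Prove the lemma only for $\boldsymbol\gamma'=\boldsymbol\gamma$. The paper's sup bound then gives $L^d\beta^{3\kappa-1/2}$, which is the rate actually used in Proposition~\ref{prop:YMH_is_close_to_Proca}, not the $\beta^{\kappa-1/2}$ of the statement.
\item Compare $\bP^{\mathfrak g}_{\boldsymbol\eta}$ and $\bP^{\mathfrak g}_{\boldsymbol\eta'}$ only through their marginals on a sub-box containing $\eps^{-1}{\rm supp}(F)$. There the covariances coincide, and by Lemma~\ref{lemma:conditioning_expectation_bound} the mean shift is exponentially small in the distance to $\partial Q_L$.
\end{itemize}
This second step is missing from the paper.

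Two smaller points on your matched step. First, linearizing $e^{-\Delta}$ requires the global bound $L^d\beta^{3\kappa-1/2}\lesssim 1$, not smallness per local term; the paper assumes this implicitly. Second, normalizing by the full partition functions $Z^{\sf YMH}_{\boldsymbol\gamma}$ and $Z^{\mathfrak g}_{\boldsymbol\eta}$ forces you to bound $\bP^{\sf YMH}_{\boldsymbol\gamma}(\cE_1^c)$ at fixed boundary data. Lemma~\ref{lemma:large_values_for_YMH_field_are_rare} does not supply this, since it concerns the translation-invariant infinite-volume measure. Comparing the normalized restrictions, as the paper does, avoids the issue and is all that measures (4) and (5) of Table~\ref{table:table} require.
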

\begin{proof}
    By Definition~\ref{def:YMH_measure_on_lattice_with_given_boundary_conditions}, the density of $\bP_{\boldsymbol{\gamma},\mathcal{E}}^{\sf YMH} \cdot \mathbf{1}_{\cE}$ is proportional to 
    \begin{equation*}
        {\rm d }\bP_{\boldsymbol{\gamma}}^{\sf YMH}(U) \cdot \mathbf{1}_{\cE}(U) \propto \exp\big(-\beta\mathcal{H}(U)\big) \cdot \mathbf{1}_{\{ U\in \cE \}} \prod_{e\in E(Q_L)} {\rm d} \mu(U_e) \prod_{e\in \partial Q_L } {\rm d}\delta_{\gamma_e}(U_e) \, ,  
    \end{equation*}
    where $U = (U_e)_{e\in E(Q_L)}$ and $\mathcal{H}$ is the Yang-Mills-Higgs action~\eqref{eq:YMH_action_with_mass_m}. Hence, denoting by $X = {\sf L}(U) $ the lifted configuration on $\mathfrak{g}^{E(Q_L)}$ and by $\eta_e = \log(\gamma_e)$, Lemma~\ref{lemma:change_of_variables_product_lie_group} shows that 
    \vspace{1mm}
    \begin{align} \label{eq:density_of_4}
        &{\rm d } \, {\sf L}_\ast \bP_{\boldsymbol{\gamma}}^{\sf YMH}(X) \cdot \mathbf{1}_{\cA}(X) \propto \\ \nonumber & \exp\Big(-\beta\mathcal{H}\big(\exp(X)\big)\Big) \cdot \mathbf{1}_{\{ X\in \cA \}} \cdot \prod_{e \in E(Q_L)}\Big|\det\Big(\frac{I - e^{-{\sf ad}_{X_e}}}{{\sf ad}_{X_e}}\Big)\Big| \prod_{e\in E(Q_L)} {\rm d} \,  {\tt Leb} (X_e) \prod_{e\in \partial Q_L } {\rm d}\delta_{\eta_e}(X_e) \, .
    \end{align}
    There are several components in the density~\eqref{eq:density_of_4}, and we analyze each of them separately. First, for any configuration $X\in \mathfrak{g}^{E(Q_L)}$ and boundary conditions $\boldsymbol{\eta}$ we denote by 
    \[
    M = \max\Big\{ \max_{e\in E(Q_L) } \| X_e\| \,  , \,  \max_{e\in \partial Q_L} \| \eta_e \| \Big\} \, ,
    \]
    and note that the event $\cA = {\sf L}(\cE)$, together with Claim~\ref{claim:taylor_expansion_for_log}, implies that $M \le 2\beta^{\kappa-1/2}$. We further denote by $\overline{E}_L = E(Q_L) \cup \partial Q_L$. Claim~\ref{claim:taylor_expansion_for_exponential_of_matrix} and Claim~\ref{claim:identity_minus_product_of_exp_is_quadratic} implies that 
    \begin{align*}
        \mathcal{H}&\big(\exp(X) \big)  \\ &=  \frac{1}{2} \sum_{\substack{p\in P(Q_L) \\ p = \{e_1,e_2,e_3,e_4\}}} \| I - \exp(X_{e_1}) \exp(X_{e_2}) \exp(-X_{e_3}) \exp(-X_{e_4}) \|^2 + \frac{m}{2}\sum_{e\in \overline{E}_L} \| I - \exp(X_e)\|^2 \\ &= \frac{1}{2} \sum_{\substack{p\in P(Q_L) \\ p = \{e_1,e_2,e_3,e_4\}}} \| ({\sf d} X)_p \|^2 + \frac{m}{2}\sum_{e\in \overline{E}_L} \| X_e \|^2 + O\big(L^d M^3\big) \stackrel{\eqref{eq:def_of_gaussian_action}}{=} S(X) + O\big(L^d M^3\big)\, . 
    \end{align*}
    Furthermore, Claim~\ref{claim:jacobian_of_change_of_variables_close_to_one} shows that
    \[
    \prod_{e \in E(Q_L)}\Big|\det\Big(\frac{I - e^{-{\sf ad}_{X_e}}}{{\sf ad}_{X_e}}\Big)\Big| = \Big(1 + O(M)\Big)^{|E(Q_L)|} = 1 + O\big(M L^d\big) \, .
    \]
    Plugging the above into~\eqref{eq:density_of_4} we get that 
    \begin{equation} \label{eq:density_of_4_after_applications_of_claims}
        {\rm d } \, {\sf L}_\ast \bP_{\boldsymbol{\gamma}}^{\sf YMH}(X) \cdot \mathbf{1}_{\cA}(X) \propto e^{-\beta S(X)}  \cdot \mathbf{1}_{\{ X\in \cA \}} \cdot \big(1 + \psi(X)\big) \prod_{e\in E(Q_L)} {\rm d} \,  {\tt Leb} (X_e) \prod_{e\in \partial Q_L } {\rm d}\delta_{\eta_e}(X_e) \, ,
    \end{equation}
    where
    \[
    \delta= \sup_{X\in \cA} | \psi(X) | \lesssim ML^d + \beta M^3 L^d \lesssim L^d \beta^{3\kappa - 1/2}  \, .
    \]
    We see from~\eqref{eq:density_of_4_after_applications_of_claims} that the density of ${\sf L}_\ast \bP_{\boldsymbol{\gamma}}^{\sf YMH}\cdot\mathbf{1}_{\cA}$ almost matches the density of the measure $\bP_{\boldsymbol{\eta^\prime}}^{{\mathfrak
    g}} \cdot 1_{\cA}$. To deal with the error term $\psi(x)$, we see that 
    \begin{multline*}
    \int_{\mathfrak{g}^{\overline{E}_L}} e^{-\beta S(X)}  \cdot \mathbf{1}_{\{ X\in \cA \}} \cdot  |\psi(X)| \prod_{e\in E(Q_L)} {\rm d} \,  {\tt Leb} (X_e) \prod_{e\in \partial Q_L } {\rm d}\delta_{\eta_e}(X_e) \\ \le \delta \int_{\mathfrak{g}^{\overline{E}_L}} e^{-\beta S(X)}  \prod_{e\in E(Q_L)} {\rm d} \,  {\tt Leb} (X_e) \prod_{e\in \partial Q_L } {\rm d}\delta_{\eta_e}(X_e) = \delta Z_{\boldsymbol{\eta}}^{\mathfrak{g}} \, .
    \end{multline*}
    Furthermore, the (Gaussian) partition function is not too sensitive to changing the boundary conditions within the constraint~\eqref{eq:good_boundary_conditions_in_comparison_lemma}, and we have the simple bound
    \[
    |Z_{\boldsymbol{\eta}}^{\mathfrak{g}} - Z_{\boldsymbol{\eta}^\prime}^{\mathfrak{g}}| = Z_{\boldsymbol{\eta}}^{\mathfrak{g}} \, \big|1 - \frac{Z_{\boldsymbol{\eta}^\prime}^{\mathfrak{g}}}{Z_{\boldsymbol{\eta}}^{\mathfrak{g}}}\big| \lesssim Z_{\boldsymbol{\eta}}^{\mathfrak{g}} \, L^{d-1} M  \lesssim Z_{\boldsymbol{\eta}}^{\mathfrak{g}} \, L^{d-1} \beta^{\kappa-1/2} \, .
    \] 
    Claim~\ref{claim:total_variation_bound_between_gibbs_measures} now implies that 
    \[
    d_{\sf TV}\big( {\sf L}_\ast\bP_{\boldsymbol{\gamma}}^{\sf YMH} \cdot \mathbf{1}_{\cE} , \,  \bP_{\boldsymbol{\eta}^\prime}^{{\mathfrak{g}}} \cdot \mathbf{1}_{\cA} \big) \le \big(\delta + L^{d-1} \beta^{\kappa-1/2} \big)\frac{Z_{\boldsymbol{\eta}}^{\mathfrak
    g}}{Z_{\boldsymbol{\eta}}^{\mathfrak{g}}}  \lesssim L^d \beta^{3\kappa-1/2} \, , 
    \]
    as desired. 
\end{proof}
\subsection{Concluding the proof of Proposition~\ref{prop:YMH_is_close_to_Proca}}
It remains to show that the total variation distance between (5) and (6) from Table~\ref{table:table} is small. This will be a simple corollary of the fact that $\cA$ is a typical event under $\bP_{\cA_2}^{\mathfrak{g}}$ , which in turn would follow from Gaussian tail bounds. We conclude the section with this bound and then complete the proof of Proposition~\ref{prop:YMH_is_close_to_Proca}.

\begin{proof}[Proof of Proposition~\ref{prop:YMH_is_close_to_Proca}]

    As we observed just above, by combining Claim~\ref{claim:from_3_to_4} and Lemma~\ref{lemma:from_4_to_5} it remains to prove a bound on the distance between measures (5) and (6) in Table~\ref{table:table}. We know from~\eqref{eq:bound_on_total_variation_distance_of_conditional_vs_original} that $d_{\sf TV}\big(\bP^{{\mathfrak{g}}}_{\mathcal{A}}, \bP^{{\mathfrak{g}}}_{\cA_2}\big) = \bP^{{\mathfrak{g}}}_{\cA_2}(\mathcal{A}_1^c)$ so we now bound the probability on the right hand side. Recall that $\cA = {\sf L}(\cE)$, where $\cE$ is given by~\eqref{eq:def_of_E}. 
    Define the event $\mathcal{B} = \Big\{\forall e\in E(Q_L) \, : \, \| X_e \| \le \frac{1}{2} \beta^{\kappa-1/2}\Big\}$. By Claim \ref{claim:taylor_expansion_for_log}, $\mathcal{B} \subset \mathcal{A}_1$, so $\bP_{\cA_2}^{\mathfrak{g}} (\mathcal{A}_1^c) \leq \bP_{\cA_2}^{\mathfrak{g}}(\mathcal{B}^c)$. Further, using the decomposition $X_e = \sum_{i=1}^n X_{i,e}V_i$ for $V_1, \dots, V_n$ an orthonormal basis of $\mathfrak{g}$, a union bound shows
    \begin{equation*}
        \bP_{\cA_2}^{\mathfrak{g}}\left(||X_e|| > \tfrac{1}{2}\beta^{\kappa-1/2}\right) \leq \sum_{i=1}^n\bP_{\cA_2}^{\mathfrak{g}}\left(|X_{i,e}| > \tfrac{1}{2n}\beta^{\kappa-1/2}\right).
    \end{equation*}
    We now bound the probability on the right-hand side of the above uniformly in $i=1, \dots, n$. This is done via a standard estimates for the conditional mean and variance of the corresponding Gaussian vector, where the conditioning is given by $\cA_2$. This is fairly standard, but for completeness we provide the details for this conditioning in Appendix~\ref{sec:appendix} below. Indeed, Lemma~\ref{lemma:conditioning_expectation_bound} shows that for any edge $e\in E(Q_L)$, we have $$\bE^{\mathfrak{g}}_{\cA_2}|X_{i,e}| \leq C\|{\boldsymbol{\eta}}\|_{\infty} \, .$$ In turn, Claim~\ref{claim:trivial_variance_bound_conditioned_Proca} gives the easy estimate $\Var^{\mathfrak{g}}_{\cA_2}(X_{i,e}) \leq (m\beta)^{-1}$. On the event $\cA_2$,  Claim~\ref{claim:taylor_expansion_for_log} implies that for $\beta$ large enough
    \[
    C \| {\boldsymbol{\eta}}\|_{\infty} \le \frac{1}{4n} \beta^{\kappa/2-1/2} \, .
    \]
    The standard Gaussian tail bound now gives
    \begin{equation*}
        \bP_{\cA_2}^{\mathfrak{g}}(|X_{i,e}| > \tfrac{1}{2n}\beta^{\kappa-1/2}) \lesssim \exp\Big(-c m \beta \cdot \beta^{2\kappa -1}\Big) = \exp\Big(-c m \beta^{2\kappa}\Big)
    \end{equation*}
    where $c= c(n,d)>0$. Union bounding over $e \in E(Q_L)$ we get that
    \begin{equation*}
        \bP_{\cA_2}^{\mathfrak{g}}(\mathcal{B}^c) \leq C L^d \exp\Big(-c m \beta^{2\kappa}\Big) \, ,
    \end{equation*}
    and putting this together with Claim~\ref{claim:from_3_to_4} and Lemma~\ref{lemma:from_4_to_5}, we have 
    \begin{equation*}
        d_{{\sf TV}}\Big(\bP_{\cA_2}^{\mathfrak{g}} , \, {\sf L}_\ast \bP_{\cE_2}^{{\sf YMH}} \Big) \le C L^d \, \frac{\log \beta}{\beta^{2\kappa}} + C L^{d} \beta^{3\kappa - 1/2} + C L^d \exp\Big(-c m \beta^{2\kappa}\Big) \, ,
    \end{equation*}
    which gives the desired bound. 
\end{proof}

\section{Continuum Proca as lattice spacing shrinks}
\label{sec:cont_proca_as_lattice_spacing_shrinks}
\noindent The goal of this section is to show how our main result (Theorem~\ref{thm:main_result}) follows from Proposition~\ref{prop:lattice_Proca_converge_to_cont}, and then also prove the latter. 

\subsection{Proof of Theorem~\ref{thm:main_result}}
The result would follow once we show that 
\begin{equation}\label{eq:proof_of_main_result_what_we_want}
    Z^\eps(F) \xrightarrow{\ \eps\to 0 \ } \mathcal{X}_{{\sf g}, m}(F) \, ,
\end{equation}
in law for any test function $F\in \mathcal{D}$. We first approximate $Z^\eps(F)$ by a finite-dimensional Gibbs measure. Indeed, let $L =  \lfloor\eps^{-1-\kappa}\rfloor$ be as in the statement of Proposition~\ref{prop:lattice_Proca_converge_to_cont} and let $\cE_2$ be the event defined in~\eqref{eq:def_of_E}. Lemma~\ref{lemma:large_values_for_YMH_field_are_rare} implies that 
\begin{equation}
    \label{eq:nu_E_2_close_to_1}
    \nu\big(\cE_2^c\big) \lesssim \eps^{\kappa \cdot C_{d,n} - d(1 + \kappa)} \xrightarrow{\ \eps\to 0 \ } 0 \, ,
\end{equation}
provided that $C_{d,n}$ in~\eqref{eq:assumption_on_rate_of_beta_in_terms_of_eps} is sufficiently large. Therefore,~\eqref{eq:proof_of_main_result_what_we_want} would follow once we show that
\begin{equation} \label{eq:proof_of_main_result_sufficient}
    Z^\eps(F) \cdot \mathbf{1}_{\cE_2} \xrightarrow{\ \eps\to 0 \ } \mathcal{X}_{{\sf g}, m}(F) \, ,
\end{equation}
in law. Using~\eqref{eq:nu_E_2_close_to_1} one more, we conclude from Proposition~\ref{prop:YMH_is_close_to_Proca} that 
\[
d_{\sf TV} \Big( Z^\eps(F) \cdot \mathbf{1}_{\cE_2} \,  , \, Z^{\mathfrak{g}, \eps}(F) \cdot \mathbf{1}_{\cE_2} \Big) \lesssim d_{{\sf TV}}\Big(\bP_{\beta,m,\cA_2}^{\mathfrak{g}} , \, {\sf L}_\ast \bP_{\beta,m,\cE_2}^{{\sf YMH}} \Big) \xrightarrow{\ \eps\to 0 \ } 0 \, .
\]
All in all,~\eqref{eq:proof_of_main_result_sufficient} now follows from the fact that
\[
Z^{{\mathfrak{g}},\eps}(F) \xrightarrow{ \ \eps\to 0 \ } \mathcal{X}_{{\sf g},m}(F)
\]
in law, which is exactly the statement of Proposition~\ref{prop:lattice_Proca_converge_to_cont}. This proves~\eqref{eq:proof_of_main_result_what_we_want} and we are done.
\qed

\subsection{Proof of Proposition~\ref{prop:lattice_Proca_converge_to_cont}}

As we already mentioned, this part follows closely the argument from~\cite[Section~4]{Chatterjee-HiggsMechanism}, and in particular the proof of Theorem~4.6 therein. As such, we will keep the section rather brief and mostly emphasis the modifications needed for our case. 
 \begin{proof}[Proof of Proposition~\ref{prop:lattice_Proca_converge_to_cont}]
    To ease notation, we henceforth drop the dependence on $\eps, G$ and write $Z = Z^{G,\eps}$. By the scaling property of Proca field, we can assume without loss of generality that $m=1$ (see e.g.\ \cite[Lemma 2.5]{Chatterjee-HiggsMechanism}). Letting $F\in \mathfrak{D}$, we need to show that $$Z(F) \xrightarrow{\ \eps\to 0 \ } X_{\mathfrak{g}}(F)$$ in law. Recall that $V_1,\ldots, V_n$ is a basis of $\mathfrak{g}$, and hence both $F$ and the random field $Z$ admits an orthogonal decomposition $$F = \sum_{\ell=1}^n F^\ell V_\ell \, ,\qquad Z = \sum_{\ell =1}^n Z^\ell V_\ell \, .$$  Since all coordinates are also Gaussian, it follows that $$Z(F) = \sum_{\ell=1}^n \big[\sum_{j=1}^d \int_{\mathbb{R}^d} \langle Z^\ell_j(x), F^\ell_j(x)\rangle \, {\rm d} x \big]=:\sum_{\ell=1}^n Z^\ell(F^\ell)$$ is a sum of independent real-valued Gaussian random variables. Similarly, $X_{\mathfrak{g}}(F) = \sum_{\ell=1}^n X^\ell_{\mathfrak{g}}(F^\ell)$ is also a sum of independent real-valued Gaussian random variables. By the Cram\'er-Wold device, it suffices to show that
    \begin{equation}
        \label{eq:proof_lattice_proca_to_cont_after_reduction}
        Z^\ell(F^\ell)\xrightarrow{\eps\to 0} X^\ell_{\mathfrak{g}}(F^\ell)
    \end{equation}
    in law for each $1\le \ell\le n$. The desired convergence~\eqref{eq:proof_lattice_proca_to_cont_after_reduction} follows immediately from~\cite[Theorem~4.6]{Chatterjee-HiggsMechanism}. Still, for completeness and to ease on the readability, we provide a sketch of the argument below. 
    \end{proof}       
    
    \begin{proof}[Sketch of proof of~\eqref{eq:proof_lattice_proca_to_cont_after_reduction}]
    Fix some $\ell\in\{1,\ldots,n\}$. Since $Z^\ell(F^\ell)$ and $X_{\mathfrak{g}}(F^\ell)$ all have a centered Gaussian distribution,~\eqref{eq:proof_lattice_proca_to_cont_after_reduction} amounts to showing that the corresponding variances converge. The basic idea is to compare the differential operator $R_1$ given by~\eqref{eq:def_of_R_m} to the corresponding difference operator induced by the lattice $\eps \bZ^d$. More formally, we let $u\in \bR^{E}$ be defined via
    \[
    u(a, a + e_i):= \int_{\eps D_a} F^\ell_i(q)\, {\rm d} q \, ,
    \]
    where $D_a$ denotes the cube $a+ [-\tfrac{1}{2},\tfrac{1}{2}]^d$. Note that $Z^\ell$ has the law of the Euclidean lattice Proca field (as given in~\cite[Definition~4.5]{Chatterjee-HiggsMechanism}) and let $\{R(e,e^\prime)\}_{e,e^\prime}$ denote the (unscaled) covariance matrix of it. Denoting by $\widetilde R = \eps^{-(d-2)} R$, we have that
    \[
    \mathrm{Var}(Z^\ell(F^\ell)) = u^\top\widetilde{R}u \, .
    \]
    On the other hand, since $$\mathrm{Var}(X^\ell_{\mathfrak{g}}(F^\ell)) = (F^\ell,R_1 F^\ell) = \lim_{\eps \to 0} \sum_{e = (a, a + e_i)} \eps^d G_i(\eps a) F_i(\eps a) \, , $$ where $G^\ell:= R_1 F^\ell \in \mathcal{A}(\bR^d)$, we can approximate $\mathrm{Var}(X^\ell_{\mathfrak{g}}(F^\ell))$ via a sequence of finite differences. Define $x(e):= G^{\ell}_i(\eps a)$ and $w(e):= \eps^d F_i^\ell (\eps a)$, then $\displaystyle \mathrm{Var}(X^\ell_{\mathfrak{g}}(F^\ell)) = \lim_{\eps \to 0} w^\top x$. We expect that 
    $$u(a, a + e_i) \approx w(a, a + e_i) \qquad \text{and} \qquad \tilde{R}u(a, a + e_i) \approx x(a, a + e_i)$$ as $\eps\to 0$ since $\tilde{R}$ is a lattice approximation of $R_1$. However, we need to keep track of the error terms and show that $|u^\top \widetilde{R} u - w^\top x| = o(1)$ as $\eps \to 0$. We claim the following error bounds:
    \begin{itemize}
        \item[(A)] We have $\|u\|, \|w\| \leq O(\eps^{d/2})$ and $\|u-w\| \leq O(\eps^{\frac{d+2}{2}})$;
        \vspace{2mm}
        \item[(B)] We have $\|\widetilde{R}\| \leq O(\eps^{-d/2}),$ and in particular $\|w - \tilde{R}^{-1}x\| \leq O(\eps^{\frac{d+2}{2}})$.
    \end{itemize}
   Assuming (A) and (B), the desired limit follows at once, since by the triangle inequality we have 
    %The result follows from the triangle inequality:
    \begin{align*}
        |u^\top \widetilde{R} u - w^\top x| &\leq |u^\top \widetilde{R} u - w^\top \widetilde{R} w| + |w^\top \widetilde{R} w - w^\top x|\\
        &\leq |u^\top \widetilde{R} (u-w)| + |(u-w)^\top \widetilde{R} w| + |w^\top \widetilde{R} w - w^\top \widetilde{R} \widetilde{R}^{-1} x| \\
        &\leq \|u\|\|\widetilde{R}\| \|u-w\| +  \|u-w\|\|\widetilde{R}\|\|w\| + \|w\| \|\widetilde{R}\| \|w - \widetilde{R}^{-1} x\| \leq O(\eps).
    \end{align*}
    The above display shows that
    \[
    \lim_{\eps \to 0} \text{Var}\big(Z^\ell(F^\ell) \big) = \text{Var}\big(X^\ell_{\mathfrak{g}}(F^\ell)\big)
    \]
    which in turn proves~\eqref{eq:proof_lattice_proca_to_cont_after_reduction}.
    While we do not provide a proof of the estimates (A) and (B) mentioned above (they are derived in the proof of~\cite[Theorem~4.6]{Chatterjee-HiggsMechanism}), we only note that they are not difficult to establish. Indeed, (A) follows by noting that $F$ is a smooth function, which in turn applies that that its variation on the Voronoi cells $D_a$ should be controlled by the volume. The estimate (B) follows from the relation $R_m = (mI - {\sf d} {\sf d}^\ast )^{-1}$ (see~\eqref{eq:def_of_R_m}), and the fact that $\widetilde R$ is simply a lattice approximation of $R_1$, in the sense that the differential operator ${\sf d}{\sf d}^\ast$ is substituted with the corresponding finite differences on the lattice $\eps \bZ^d$ -- see in particular~\cite[Eq. (4.7)]{Chatterjee-HiggsMechanism}. We also note that in the proof of the estimates (A) and (B) one needs to use the fact that off-diagonal elements of the covariance matrix $\widetilde{R}$ decay exponentially in the distance, a fact which we prove in the appendix (see Lemma~\ref{lemma:lattice_Proca_is_massive}). 

  \end{proof}

\bibliographystyle{abbrv}
\bibliography{YangMills}

\pagebreak 

\appendix 
\section{Properties of the Lie-algebra valued Proca Field}
\label{sec:appendix}

In this appendix, we will prove basic properties of the Lie-algebra valued Proca field. The key to many of the proofs is Remark~\ref{remark:cordinates_are_independent_for_free_bdy_conditions}. This remark states that for free boundary conditions, the coordinates of the massive Proca field are i.i.d., and in fact each projection has the distribution of a $d$-dimensional Euclidean lattice Proca field, as defined in~\cite{Chatterjee-HiggsMechanism}. 

Indeed, let $V_1, \dots, V_n$ be an orthonormal basis of $\mathfrak{g}$ with respect to the Hilbert-Schmidt inner product~\eqref{eq:HS_inner_product}, and write $X_e = \sum_{i=1}^n X_{i,e} V_i$. Then if $X \sim \bP_{\beta,m,\text{\normalfont free}}^{\mathfrak{g}}$ is a massive Proca field with free boundary conditions, each $(X_{i,e})_{e \in E(Q_L)}$ is an independent $\bR^d$-valued massive Proca field with free boundary conditions. In particular, each $(X_{i,e})_{e \in E(Q_L)}$ is a Gaussian random vector with density proportional to
\begin{equation*}
    \exp\left( -\frac{1}{2}\sum_{p \in P(Q_L)} ||({\sf d}X_{i})_p||^2 - \frac{m}{2}\sum_{e \in E(Q_L)} X_{i,e}^2\right),
\end{equation*}
where $({\sf d}X_{i})_p = X_{i,e_1} + X_{i,e_2} - X_{i,e_3} - X_{i,e_4}$ with $e_1, e_2, e_3$, and $e_4$ being the edges in the plaquette $p$. In the proofs of the lemmas below, we will often prove the result by just considering the projection to the first coordinate $X_{1,e}$, and then conclude by the aforementioned decomposition of the massive Proca field. By a slight abuse of notation, we will drop the subscript and just write $X(e)$ for $X_{1,e}$. 

\subsection{Decay of correlations}
The first application of this decomposition is to prove that the Proca field is in fact massive.

\begin{lemma} \label{lemma:lattice_Proca_is_massive}
    Denote by $(X_e)_{e\in E(Q_L)}$ a realization of the lattice $\mathfrak{g}$-valued Proca field with free boundary conditions, that is, distributed according to the measure $\bP_{\text{\normalfont free}}^{\mathfrak{g}} = \bP_{\beta,m,\text{\normalfont free}}^{\mathfrak{g}}$ from Definition~\ref{def:lattice_g_valued_proca_field}. Then there exist constants $C,c>0$ (depending only on the lattice dimension $d\ge 2$ and $n=\text{\normalfont dim}(G)$) so that
    \[
    \max_{1\le i,j\le n} \, \bE_{\text{\normalfont free}}^{\mathfrak{g}} \Big[ \langle X_e,V_i\rangle \,  \langle X_{e^\prime} , V_j\rangle  \Big] \le C \beta^{-1} \,  e^{-c \, \min\{m,1\} \, \text{\normalfont dist}(e,e^\prime)},
    \]
    where $\text{\normalfont dist}(\cdot,\cdot)$ is the graph distance on $\bZ^d$.
\end{lemma}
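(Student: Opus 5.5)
The plan is to reduce to a single scalar lattice Proca field, write its covariance as a resolvent, and obtain exponential decay from a Combes--Thomas (exponentially weighted resolvent) estimate.

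\emph{Reduction to one coordinate.} By Remark~\ref{remark:cordinates_are_independent_for_free_bdy_conditions}, under $\bP_{\text{\normalfont free}}^{\mathfrak{g}}$ the fields $(\langle X_e,V_i\rangle)_e$, $1\le i\le n$, are i.i.d. Hence the covariance vanishes for $i\neq j$, and it suffices to treat $i=j=1$, writing $X(e)=\langle X_e,V_1\rangle$. The density of $X$ is proportional to $\exp(-\tfrac{\beta}{2}\langle X,(mI+{\sf d}^\ast{\sf d})X\rangle)$, where ${\sf d}:\bR^{E(Q_L)}\to\bR^{P(Q_L)}$ is the lattice coboundary with free boundary conditions. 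Therefore
\[
\bE\big[X(e)X(e')\big]=\beta^{-1}\,\big\langle \mathbf 1_e,(mI+A)^{-1}\mathbf 1_{e'}\big\rangle,\qquad A={\sf d}^\ast{\sf d}.
\]
The operator $A$ is positive semidefinite. It has finite range: $A(f,f')\neq0$ only if $f,f'$ lie in a common plaquette. It also has bounded entries and $\|A\|_{\rm op}\le K_d$ for a constant $K_d$ depending only on $d$. The factor $\beta^{-1}$ is thus exact, and everything reduces to the kernel bound
\[
\big|(mI+A)^{-1}(e,e')\big|\le C\,e^{-c\min\{m,1\}\,{\rm dist}(e,e')},
\]
uniformly in $L$.

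\emph{Weighted resolvent estimate.} Fix $e$, set $\phi(f)=\lambda\,{\rm dist}(f,e)$, and conjugate: $A_\lambda=e^{\phi}Ae^{-\phi}$. Since $A$ has range one and bounded entries, $\|A_\lambda-A\|_{\rm op}\le C_d(e^{\lambda}-1)\le 2C_d\lambda$ for $\lambda\le1$. The self-adjoint part of $mI+A$ is at least $m$, so the numerical range of $mI+A_\lambda$ stays at distance at least $m-2C_d\lambda$ from $0$. Choosing $\lambda=c\min\{m,1\}$ with $c=1/(4C_d)$ gives
\[
\|(mI+A_\lambda)^{-1}\|_{\rm op}\le 2/m .
\]
Reading off the $(e,e')$ entry of $e^{\phi}(mI+A)^{-1}e^{-\phi}$ then yields decay $e^{-\lambda\,{\rm dist}(e,e')}$, with a prefactor coming from the operator-norm bound.

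\emph{Main obstacle: the prefactor.} The argument above produces a prefactor $2/m$, not the $m$-independent constant $C$ claimed. I expect this step to be the genuine difficulty, and I have not found a way around it. Splitting $\mathbf 1_{e'}$ via the discrete Hodge decomposition into an exact part and a co-closed part does not help. On the exact part $A=0$, so the resolvent there is exactly $m^{-1}P_{\rm exact}$. The diagonal entry $P_{\rm exact}(e,e)$ is bounded below by a $d$-dependent constant: it equals the effective resistance of $e$ in $\bZ^d$, which is $1/d$. This forces $\bE[X(e)^2]\ge c/(\beta m)$ as $m\to0$. So for the $i=j$, $e=e'$ case, the estimate as literally stated cannot hold uniformly in $m\in(0,1)$ with $C=C(d,n)$.

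\emph{What I would do.} I would carry out the proof above, which gives the statement with $C$ depending on $d$, $n$ and a lower bound for $m$. This suffices for the exponential-decay input used in Section~\ref{sec:cont_proca_as_lattice_spacing_shrinks}, after rescaling to $m=1$ as done there. I would state explicitly that the constant $C$ must be allowed to depend on $m$, behaving like $\max\{1,m^{-1}\}$, rather than claim the uniform version, which the exact-forms computation shows to be false.
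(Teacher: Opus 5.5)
Your proof is correct, and so is your objection: with $C$ depending only on $d,n$, the lemma is false for small $m$. Your lower bound is valid. On $\ker{\sf d}$, which contains all gradients, the resolvent $(mI+{\sf d}^\ast{\sf d})^{-1}$ is exactly $m^{-1}$, and the remaining part is positive semidefinite. Also $P_{\rm exact}(e,e)$ is the effective resistance across $e$ in the graph of $Q_L$. In finite volume this does not equal the $\bZ^d$ value, but Rayleigh monotonicity bounds it below by $1/d$, which is all you need. Hence $\bE[\langle X_e,V_1\rangle^2]\ge (dm\beta)^{-1}$.

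The paper's own proof does not avoid this either. Summing its final geometric series gives
\[
\frac{1}{m+C_d}\sum_{k\ge k_0}\Big(1-\frac{m}{m+C_d}\Big)^k=\frac{1}{m}\Big(1-\frac{m}{m+C_d}\Big)^{k_0},\qquad k_0=\lceil c\,{\rm dist}(e,e')\rceil ,
\]
which is exactly your $m^{-1}e^{-c\min\{m,1\}{\rm dist}(e,e')}$, despite the claim of $m$-independent constants. The lemma should therefore be read with $C\lesssim\max\{1,m^{-1}\}$. This matches the later appendix lemmas, where the constants are explicitly allowed to depend on $m$.

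Your route differs from the paper's. The paper normalizes $S=I-(m+C_d)^{-1}(mI+{\sf d}^\ast{\sf d})$ and uses self-adjointness to get $\|S\|\le 1-\tfrac{m}{m+C_d}$. It then expands $(I-S)^{-1}=\sum_k S^k$; finite range of $S$ kills the terms with $k<c\,{\rm dist}(e,e')$, and the rest is a geometric tail. You instead conjugate by $e^{\lambda\,{\rm dist}(\cdot,e)}$ and bound the inverse through the numerical range (Combes--Thomas). Both arguments use the same ingredients: the spectral gap $m$, bounded entries, and finite range. Both give decay rate $\asymp\min\{m,1\}$ and prefactor $\asymp m^{-1}$. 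The Neumann-series argument is slightly more elementary. Yours uses self-adjointness only through the lower bound on the real part of the numerical range, so it would also cover non-symmetric finite-range perturbations.

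One caution about your last paragraph. In Section~4 the lattice mass is $\eps m$, and rescaling the continuum mass to $1$ still leaves lattice mass $\eps\to0$. What is needed downstream is therefore the explicit $2/m$ prefactor you derived, not merely a constant depending on a lower bound for $m$.
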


\noindent 
Different versions of this lemma have already appeared in the literature; see for example~\cite[Lemma~12.1]{Balaban-Imbrie-Jaffe-Brydges} or \cite[Lemma~4.10]{Chatterjee-HiggsMechanism}. For the reader's convenience, we provide the proof below. 
\begin{proof}[Proof of Lemma~\ref{lemma:lattice_Proca_is_massive}]
    By scaling, we may assume without loss of generality that $\beta = 1$. Furthermore, by the above considerations it is enough to prove the lemma for $i= j = 1$, and write $X(e) \coloneqq X_{1,e}$ as mentioned above. Then $(X(e))_{e \in E(Q_L)}$ is a multivariate Gaussian with density proportional to
    \begin{equation*}
        \exp\left( -\frac{1}{2}\sum_{p \in P(Q_L)} ||({\sf d}X)_p||^2 - \frac{m}{2}\sum_{e \in E(Q_L)} X(e)^2\right).
    \end{equation*}
    As we are dealing with lattice differential forms (see \cite[Section~10]{Cao-Sheffield-FGF-overview} for a review), we can write 
    \begin{equation*}
        ||({\sf d}X)_p||^2 = \langle({\sf d}X)_p, ({\sf d}X)_p\rangle = \langle X, {\sf d}^* {\sf d}X\rangle,
    \end{equation*}
    where ${\sf d}^*$ is the lattice codifferential (as spelled out in~\cite[Definition 10.10]{Cao-Sheffield-FGF-overview}), and the second equality follows since ${\sf d}$ and ${\sf d}^*$ are adjoint. Thus, if we define the operator $R_m = mI + {\sf d}^* {\sf d}$ on $\bR^{E(Q_L)}$, then $(X(e))_{e \in E(Q_L)}$ is a multivariate Gaussian with density proportional to
    \begin{equation*}
        \exp\left(-\frac{1}{2}\langle X, R_m X\rangle\right).
    \end{equation*}
    We want to show that
    \begin{equation}
        \label{eq:proof_of_lemma:decay_of_correlation_for_lattice_proca_what_we_want}
        \big|(mI + {\sf d}^* {\sf d })^{-1} (e,e^\prime) \big| \le  C e^{-c  \, \text{dist}(e,e^\prime)}
    \end{equation}
    for some $m$-independent constants $C,c>0$. Note that for all $\psi\in \bR^{E(Q_L)}$ we have
    \[
    \langle R_m\psi ,\psi\rangle = m \| \psi \|^2 + \|d\psi\|^2.
    \]
    By Cauchy-Schwarz, there exists a constant $C_d$ (depending only on lattice dimension) so that
    \[
    \|{\sf d}\psi\|^2 \le C_d \|\psi \|^2 \, .
    \]
    Hence, we conclude that the spectrum of $R_m$ must lie in the interval $[m,m+C_d]$. Now set
    \[
    S = I - \frac{1}{m+C_d} R_m ,
    \]
    and note that $S$ is non-negative definite with maximal eigenvalue at most $1-m/(m+C_d) < 1$. We get that
    \[
    (m+C_d) R_m^{-1} = (I - S)^{-1} = \sum_{k = 0}^\infty S^k,
    \]
    where the series on the right-hand side converges absolutely. Since the matrix $S$ has non-zero entries only on a band of constant width around the diagonal, we conclude that
    \begin{align*}
        \big|(mI + {\sf d}^* {\sf d})^{-1} (e,e^\prime) \big| &\le \frac{1}{m+C_d} \sum_{k=0}^\infty \big|S^k(e,e^\prime) \big| \\ & = \frac{1}{m+C_d} \sum_{k=c \, \text{dist}(e,e^\prime)}^\infty \big|S^k(e,e^\prime) \big| \\ &\le \frac{1}{m+C_d} \sum_{k=c \, \text{dist}(e,e^\prime)}^\infty \|S\|^k \\ &\le \frac{1}{m+C_d} \sum_{k=c \, \text{dist}(e,e^\prime)}^\infty \big(1-\frac{m}{m+C_d}\big)^k
    \end{align*}
    which gives~\eqref{eq:proof_of_lemma:decay_of_correlation_for_lattice_proca_what_we_want} and we are done.

\end{proof}

\subsection{Boundary Conditions and Gaussian Conditioning}
The lemmas in this section show that the massive Proca field with boundary conditions is close to the massive Proca field with free boundary conditions. The main reason for this is due to the exponential decay of correlations in Lemma~\ref{lemma:lattice_Proca_is_massive}. Below, let $X \sim \bP_{\beta,m,\boldsymbol{\eta}}^{{\mathfrak{g}}}$ be a massive Proca field with boundary condition $\boldsymbol{\eta}$, and let $Y\sim \bP_{\beta,m,\text{\normalfont free}}^{\mathfrak{g}}$ be a massive Proca field with free boundary conditions. As in Remark~\ref{remark:cordinates_are_independent_for_free_bdy_conditions}, for each edge $e$, we expand
\begin{equation*}
    X_e = \sum_{i=1}^n X_{i,e}V_i, \qquad Y_e = \sum_{i=1}^n Y_{i,e}V_i.
\end{equation*}
We let the \textit{covariance matrix of $X$ (resp. $Y$)} be the covariance matrix of the random variables $(X_{i,e})_{1\leq i \leq n, \,e \in E(Q_L)}$ (resp. $(Y_{i,e})_{1\leq i \leq n, \,e \in E(Q_L)}$). In order to relate the two fields, we use standard multivariate Gaussian conditioning facts.

Again we will consider just the first coordinate $(X(e))_{e \in E(Q_L)} = (X_{1,e})_{e \in E(Q_L)}$ and similarly for $Y$. We denote the boundary condition induced on $(X_{1,e})_{e \in \partial E}$ by $\boldsymbol{\eta}$ as $\boldsymbol \eta_1$. We will write 
\begin{equation*}
    X = \begin{bmatrix}
        X^{\circ}\\
        \partial X \\
    \end{bmatrix}, \qquad \qquad 
    Y =  \begin{bmatrix}
        Y^{\circ}\\
        \partial Y \\
    \end{bmatrix},
\end{equation*}
where $X^{\circ}$ corresponds to the interior edges and $\partial X $ to the boundary edges, and similarly for $Y$. By symmetry, since $Y$ has free boundary conditions, we know that $\bE [Y(e)] = 0$ for all $e \in E(Q_L)$. Thus we can write 
\begin{equation*}
    Y \sim\mathcal{N}(0, R),
\end{equation*}
where $R(e, e') = \bE[Y(e)Y(e')]$. Further, we decompose $R$ as 
\begin{equation*}
    R = \begin{bmatrix}
        T & Q \\
        Q^T & S
    \end{bmatrix}, 
\end{equation*}
where $T$ is the covariance matrix of the interior edges, $S$ is the covariance matrix of the boundary edges, and $Q$ is the covariance matrix of the cross terms. Explicitly, we have
\begin{align*}
   (T(e, e'))_{e, e' \in E^{\circ}} &= (R(e, e'))_{e, e' \in E^{\circ}} \\
   (S(e, e'))_{e, e' \in \partial E} &= (R(e, e'))_{e, e' \in \partial E} \\
   (Q(e, e'))_{e \in E^{\circ}, e' \in \partial E} &= (R(e, e'))_{e \in E^{\circ} e', \in \partial E}.
\end{align*}
Then $X$ is distributed as $Y$ conditioned on the boundary values being $\boldsymbol \eta_1$. Thus, by multivariate Gaussian conditioning facts (e.g. \cite[Proposition 3.13]{EatonMultivariateStatistics1983}), we have in the interior
\begin{equation}\label{eq:proca_boundary_conditioning_dist}
    X^{\circ} \sim \mathcal{N}(QS^{-1}{\boldsymbol \eta_1}, T - QS^{-1}Q^{T}).
\end{equation}

Note that since $QS^{-1}Q^T$ is positive definite and thus for any edge $e$, $(QS^{-1}Q^T)(e, e) \geq 0$, this immediately implies the following claim.

\begin{claim}\label{claim:trivial_variance_bound_conditioned_Proca}
    For any edge $e$ in the interior of $Q_L$ and any $i = 1, \dots, n$, 
\begin{equation*}
    \Var(X_{i,e}) \leq \frac{1}{m\beta}.
\end{equation*}
\end{claim}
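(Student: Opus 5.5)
The bound will follow from the conditional covariance formula \eqref{eq:proca_boundary_conditioning_dist} together with a comparison between the free-field covariance $T$ and the inverse of the precision matrix. We work with a single coordinate $X(e)=X_{i,e}$. This is justified by Remark~\ref{remark:cordinates_are_independent_for_free_bdy_conditions}: the coordinates are i.i.d., and conditioning on the boundary values of all coordinates factorizes across $i$. By \eqref{eq:proca_boundary_conditioning_dist}, $\Var(X_{i,e}) = (T - QS^{-1}Q^T)(e,e)$. Since $S$ is a positive definite covariance matrix, $QS^{-1}Q^T$ is positive semidefinite, so its diagonal entries are nonnegative. Hence $\Var(X_{i,e}) \le T(e,e) = R(e,e)$, and it suffices to bound the diagonal of the free-field covariance $R$.

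To bound $R(e,e)$, we write the free-field density as in the proof of Lemma~\ref{lemma:lattice_Proca_is_massive}, now keeping track of $\beta$. The density is proportional to $\exp(-\tfrac{\beta}{2}\langle Y, (mI + {\sf d}^*{\sf d})Y\rangle)$, so $R = \beta^{-1}(mI+{\sf d}^*{\sf d})^{-1}$. Because ${\sf d}^*{\sf d}$ is positive semidefinite, we have $\langle (mI+{\sf d}^*{\sf d})\psi,\psi\rangle \ge m\|\psi\|^2$ for every $\psi$. Therefore the spectrum of $mI+{\sf d}^*{\sf d}$ lies in $[m,\infty)$, and $(mI+{\sf d}^*{\sf d})^{-1} \preceq m^{-1} I$. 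In particular every diagonal entry satisfies $R(e,e) = \beta^{-1}\langle \mathbf{1}_e, (mI+{\sf d}^*{\sf d})^{-1}\mathbf{1}_e\rangle \le (m\beta)^{-1}$.

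Combining the two steps gives $\Var(X_{i,e}) \le R(e,e) \le (m\beta)^{-1}$ for every interior edge $e$ and every $i$. A shorter route avoids the Schur complement entirely. The conditional law of the interior given the boundary is Gaussian with precision matrix equal to the interior block $\beta(mI+{\sf d}^*{\sf d})_{E^\circ E^\circ} \succeq \beta m I$. So the conditional covariance is the inverse of this block and is dominated by $(\beta m)^{-1}I$.

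The only point that needs care is the normalization: one must confirm that the free measure's precision is $\beta(mI + {\sf d}^*{\sf d})$, with $\beta$ multiplying both terms of the action $S$. This is immediate from Definition~\ref{def:lattice_g_valued_proca_field}. I do not expect any genuine obstacle here.
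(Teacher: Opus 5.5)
Your proposal is correct and follows the paper's argument. The paper likewise reads off $\Var(X_{i,e}) = (T - QS^{-1}Q^T)(e,e) \le T(e,e)$ from \eqref{eq:proca_boundary_conditioning_dist}, using that $QS^{-1}Q^T$ has nonnegative diagonal. The remaining step, $R(e,e)\le (m\beta)^{-1}$ via $R=\beta^{-1}(mI+{\sf d}^\ast{\sf d})^{-1}\preceq (m\beta)^{-1}I$, is left implicit in the paper (the spectral bound on $R$ only appears in the subsequent covariance-comparison lemma), so your write-up is slightly more complete.
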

While the above lemma simply follows from conditioning reducing the variance, the next lemma gives an explicit bound on the difference in covariances of the massive Proca field with boundary conditions and the massive Proca field with free boundary conditions.

\begin{lemma}
    For $M \in \bN$ with  $M < L$, define $ \Sigma_{M,{\boldsymbol{\eta}}}$ and $\Sigma_{M,\text{\normalfont free}}$ to be the covariance matrices of $X$ and $Y$ (as defined above), respectively, restricted to the edges in the box $[-M,M]^d \subset[-L,L]^d$. Then
    \begin{equation*}
        \left\| \Sigma_{M,{\boldsymbol{\eta}}} - \Sigma_{M,\text{\normalfont free}}\right \| \leq \frac{CL^{d-1}M^{d}}{\beta}e^{-cm(L-M)},
    \end{equation*}
    where $C$ is a constant that only depends on the lattice dimension $d \geq 2$, $m$, and $n = \dim(G)$, and $c$ only depends on $d$ and $n$.
\end{lemma}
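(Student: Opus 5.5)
We work, as in the preceding discussion, with a single coordinate $X(e)=X_{1,e}$, $Y(e)=Y_{1,e}$. By Remark~\ref{remark:cordinates_are_independent_for_free_bdy_conditions} the $n$ coordinates are independent under free boundary conditions. Conditioning on the boundary values factorizes over coordinates, since the boundary constraint is itself a product over coordinates. So both covariance matrices are block diagonal with $n$ identical blocks, and it suffices to bound the one-coordinate difference. The constant will then depend on $n$ at most through this block structure.

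By the Gaussian conditioning formula~\eqref{eq:proca_boundary_conditioning_dist}, the covariance of $X^\circ$ is $T - QS^{-1}Q^T$, while that of $Y^\circ$ is $T$. Restricting to edges in $[-M,M]^d$ gives
\[
\Sigma_{M,\boldsymbol\eta}-\Sigma_{M,\text{free}} = -\big(QS^{-1}Q^T\big)\big|_{[-M,M]^d}.
\]
Note that this does not depend on $\boldsymbol\eta$, which only shifts the mean. Write $Q_M$ for the rows of $Q$ indexed by edges in the inner box. Then the operator norm of the difference is at most $\|Q_M\|^2\,\|S^{-1}\|$.

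We bound the two factors separately.

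\textbf{Bound on $Q_M$.} Its entries are $R(e,e')$ with $e$ in the inner box and $e'\in\partial Q_L$. By Lemma~\ref{lemma:lattice_Proca_is_massive}, these are at most $C\beta^{-1}e^{-c\min\{m,1\}(L-M)}$, since $\mathrm{dist}(e,e')\ge L-M$. The matrix has $O(M^d)\times O(L^{d-1})$ entries, so bounding the operator norm by the Frobenius norm gives
\[
\|Q_M\| \le C\beta^{-1}(M^dL^{d-1})^{1/2}e^{-c\min\{m,1\}(L-M)}.
\]

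\textbf{Bound on $S^{-1}$.} This is the step requiring care. $S$ is a principal submatrix of the covariance $R=\beta^{-1}(mI+{\sf d}^*{\sf d})^{-1}$. As shown in the proof of Lemma~\ref{lemma:lattice_Proca_is_massive}, the spectrum of $mI+{\sf d}^*{\sf d}$ lies in $[m,m+C_d]$. Hence every eigenvalue of $R$ is at least $\beta^{-1}(m+C_d)^{-1}$. By Cauchy interlacing, or simply the variational characterization, the same lower bound holds for the principal submatrix $S$. Therefore $\|S^{-1}\|\le \beta(m+C_d)$.

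Combining the two bounds,
\[
\|\Sigma_{M,\boldsymbol\eta}-\Sigma_{M,\text{free}}\| \le C(m+C_d)\beta^{-1}M^dL^{d-1}e^{-2c\min\{m,1\}(L-M)}.
\]
This is of the claimed form, with the $m$-dependence absorbed into $C$ as the statement allows. For $m\le 1$ we have $\min\{m,1\}=m$. For $m>1$, one can either note that $e^{-2c(L-M)}\le$ the required bound after adjusting constants, or simply state the result for $m\in(0,1)$ as in Proposition~\ref{prop:YMH_is_close_to_Proca}.

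The main point to double-check is the lower spectral bound on $S$. The potential pitfall is that one needs an eigenvalue lower bound on a submatrix of $R$, not an upper bound, which is why we use the upper spectral bound $m+C_d$ on $R^{-1}$ rather than the mass $m$. The rest is bookkeeping of the sizes of the index sets.
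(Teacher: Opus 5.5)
Your proposal is correct and is essentially the paper's proof: the conditioning formula~\eqref{eq:proca_boundary_conditioning_dist} reduces the difference to $QS^{-1}Q^T$ on the inner box, the entries of $Q$ are controlled by Lemma~\ref{lemma:lattice_Proca_is_massive} using $\mathrm{dist}\ge L-M$, and $\|S^{-1}\|\le\beta(m+C_d)$ comes from the lower end of the spectrum of $R$ via interlacing, a point you state more explicitly than the paper does. Two small adjustments: the paper's $\|\cdot\|$ is the Hilbert--Schmidt norm, and your factorization covers it with the same bound, since $\|Q_MS^{-1}Q_M^T\|_{\mathrm{HS}}\le\|Q_M\|_{\mathrm{HS}}^2\|S^{-1}\|_{\mathrm{op}}$; and for $m>1$ only your second option (restricting to $m\le 1$) is valid, because $c$ may not depend on $m$, so $e^{-2c(L-M)}$ cannot be converted into $e^{-c'm(L-M)}$ by adjusting constants --- the paper's proof makes the same implicit restriction when it writes $e^{-cmr}$ in place of $e^{-c\min\{m,1\}r}$.
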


\begin{proof}
    First, note that if $\sup_{e\in \partial Q_L} \|\eta_e  \| \le C$ for some constant $C$, then for any $i = 1, \dots, n$,  $\sup_{e\in \partial Q_L} |X_{i,e} | \le C$ by the orthonormality of $V_1, \dots V_n$. Thus the boundary condition $\boldsymbol{\eta}$ induces boundary conditions $\boldsymbol{\eta}_i$ on each of the Gaussian vectors $(X_{i, e})_{e \in E(Q_L)}$ which satisfy $||\boldsymbol{\eta}_i||_{\infty} \leq \sup_{e\in \partial Q_L} \|\eta_e\|$.
    
    We now reduce to the case of just the first coordinate Gaussian, i.e., we prove a bound for the covariance matrices of $(X_{1,e})_{e \in E(Q_L)}$ and $(Y_{1,e})_{e \in E(Q_L)}$, and we will abuse notation below by dropping the coordinate 1 and writing $X(e)$ and $Y(e)$. Let $E(Q_M)$ denote the set of edges in $[-M,M]^d\cap \bZ^d$. We thus want to bound
    \begin{equation*}
        \sum_{e,e'\in E(Q_M)}(\cov(X(e),X(e'))-\cov(Y(e),Y(e')))^2.
    \end{equation*}

   \noindent
    By the decomposition in Equation~\ref{eq:proca_boundary_conditioning_dist} for any pair of edges $e, e'$ in the interior of $Q_L$ (and in particular for any $e,e' \in E(Q_M)$), 
    \begin{equation*}
        \cov(X(e),X(e'))-\cov(Y(e),Y(e')) =  (QS^{-1}Q^{T})(e,e').
    \end{equation*}
    For a given edge $e$, we let $q(e) = (Q(e,e'))_{e' \in \partial Q_L}$ be the vector of covariances with the boundary edges. Then it remains to bound the quantity
    \begin{equation*}
        \sum_{e, e' \in E(Q_M)}(q(e)S^{-1}q(e)^*)^2.
    \end{equation*}

\noindent     Now note that $S$ is a principal submatrix of $R$, so the maximal eigenvalue of $S$ is bounded by that of $R$. In the proof of Lemma~\ref{lemma:lattice_Proca_is_massive}, we showed that the spectrum of $\beta R^{-1}$ (denoted $R_m$ in the proof of Lemma~\ref{lemma:lattice_Proca_is_massive}) is contained in the interval $[m, m+C_d]$ for some constant $C_d$ that only depends on the lattice dimension, so the spectrum of $R$ (and hence the spectrum of $S$) is contained in $\left[\frac{1}{\beta(m+C_d)}, \frac{1}{\beta m}\right]$. Thus the above quantity is bounded by
    \begin{equation*}
        \beta^2 (m+C_d)^2\sum_{e, e' \in E(Q_M)} (||q(e)||^2||q(e')||^2) = \beta^2 (m+C_d)^2\left(\sum_{e\in E(Q_M)} ||q(e)||^2\right)^2.
    \end{equation*}

    For a given edge $e$ that is distance $r$ from the boundary, by Lemma~\ref{lemma:lattice_Proca_is_massive}, 
    \begin{equation*}
        ||q(e)||^2 \leq |\partial Q_L| \sup_{e' \in \partial Q_L} \cov(X_e,X_{e'})^2 \leq C\beta^{-2}L^{d-1}e^{-cmr}.
    \end{equation*}
    Then (with $C$ being a constant that only depends on $d$, $m$, and $n = \dim(G)$ possibly changing between lines),
    \begin{align*}
        \sum_{e \in E(Q_M)} ||q(e)||^2 &\leq \frac{CL^{d-1}}{\beta^2}\sum_{r=L-M}^{L}|\{e \in E(Q_M) : d(e, \partial Q_L) = r\}|e^{-cmr} \\
        &= \frac{CL^{d-1}}{\beta^2}\sum_{r=L-M}^{L}(L-r)^{d-1}e^{-cmr} \\
        &\leq \frac{CL^{d-1}}{\beta^2}M^{d}e^{-cm(L-M)}.
    \end{align*}
    Putting everything together, we get that
    \begin{equation*}
        \sum_{e, e' \in E(Q_M)}(q(e)S^{-1}q(e)^*)^2 \leq \frac{CL^{2(d-1)}M^{2d}}{\beta^2}e^{-2cm(L-M)},
    \end{equation*}
    and taking the square root to get the Hilbert-Schmidt norm gives the result.

\end{proof}

\begin{lemma}\label{lemma:conditioning_expectation_bound}
    As above, let $X$ be a massive Proca field with boundary condition $\boldsymbol{\eta}$, and let $||\boldsymbol{\eta}||_{\infty}=\sup_{e \in \partial Q_L} ||\boldsymbol{\eta}_e||$. Then for any edge $e \in E(Q_L)\setminus \partial Q_L$,
    \begin{equation*}
        \bE[||X_e||] \leq C||\boldsymbol{\eta}||_{\infty}e^{-cmd(e,\partial Q_L)},
    \end{equation*}
    where $C$ and $c$ are constants that only depend on the dimension $d \geq 2$, $n = \dim(G)$, and $m$, and $d(e, \partial Q_L)$ is the graph distance from $e$ to the boundary of $Q_L$. In particular, for any $i=1, \dots, n$,
    \begin{equation*}
        \bE[|X_{i,e}|] \leq C ||\boldsymbol{\eta}||_{\infty}e^{-cmd(e,\partial Q_L)}.
    \end{equation*}
\end{lemma}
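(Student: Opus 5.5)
The plan is to use the explicit Gaussian conditioning formula \eqref{eq:proca_boundary_conditioning_dist} coordinate by coordinate. First I reduce to a single coordinate. Writing $X_e=\sum_{i}X_{i,e}V_i$ with $V_1,\dots,V_n$ orthonormal, we have $\|X_e\|\le\sum_{i=1}^n|X_{i,e}|$. Moreover $\|\boldsymbol\eta_i\|_\infty\le\|\boldsymbol\eta\|_\infty$, as observed in the proof of the preceding lemma. So it suffices to bound $\bE|X_{i,e}|$ for $i=1$, at the cost of a factor $n$ in $C$. This gives both displayed inequalities at once.

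For the first coordinate, \eqref{eq:proca_boundary_conditioning_dist} gives $X^\circ=QS^{-1}\boldsymbol\eta_1+W$, where $W\sim\mathcal N(0,T-QS^{-1}Q^T)$. Hence
\[
\bE|X(e)|\le |(QS^{-1}\boldsymbol\eta_1)(e)|+\bE|W(e)| .
\]
\textbf{Mean term.} I would write $(QS^{-1}\boldsymbol\eta_1)(e)=\langle q(e),S^{-1}\boldsymbol\eta_1\rangle$, with $q(e)=(Q(e,e'))_{e'\in\partial Q_L}$. The spectral bound from the proof of the previous lemma gives $\|S^{-1}\|_{\rm op}\le\beta(m+C_d)$. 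Lemma~\ref{lemma:lattice_Proca_is_massive} gives $|Q(e,e')|\le C\beta^{-1}e^{-c\min\{m,1\}\,{\rm dist}(e,e')}$. A crude Cauchy--Schwarz estimate would cost a factor $|\partial Q_L|^{1/2}\sim L^{(d-1)/2}$, which is not allowed since $C$ must be independent of $L$. So instead I would use that $S^{-1}$ itself decays exponentially off the diagonal. This holds because $S$ is banded up to exponentially small tails and has spectrum in $[\tfrac{1}{\beta(m+C_d)},\tfrac{1}{\beta m}]$, so the Neumann-series argument of Lemma~\ref{lemma:lattice_Proca_is_massive} applies to $S$. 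Combining the two decays, $\sum_{e'}|(QS^{-1})(e,e')|\le Ce^{-c m\,d(e,\partial Q_L)}$ after summing a convolution of exponentials over the $(d-1)$-dimensional boundary. This yields $|(QS^{-1}\boldsymbol\eta_1)(e)|\le C\|\boldsymbol\eta\|_\infty e^{-cm\,d(e,\partial Q_L)}$, with the $\beta$ factors cancelling.

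\textbf{Fluctuation term.} By Jensen and Claim~\ref{claim:trivial_variance_bound_conditioned_Proca}, $\bE|W(e)|\le (m\beta)^{-1/2}$. This is the step I expect to be the real obstacle. The term is independent of $\boldsymbol\eta$ and does not decay in $d(e,\partial Q_L)$, so it cannot be absorbed into $C\|\boldsymbol\eta\|_\infty e^{-cm\,d(e,\partial Q_L)}$ in general. For instance, with $\boldsymbol\eta=0$ the right-hand side vanishes while $\bE|X(e)|>0$. The bound therefore only closes when $(m\beta)^{-1/2}\lesssim\|\boldsymbol\eta\|_\infty e^{-cm\,d(e,\partial Q_L)}$. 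Otherwise one must read the statement with $\bE X_{i,e}$ (the conditional mean), which is exactly what the mean-term estimate proves.

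In the paper's application (the proof of Proposition~\ref{prop:YMH_is_close_to_Proca}), the lemma is used only to control the conditional mean, with the variance handled separately by Claim~\ref{claim:trivial_variance_bound_conditioned_Proca}. So I would prove the mean bound as above, and then state the absolute-moment version in the form
\[
\bE|X_{i,e}|\le C\|\boldsymbol\eta\|_\infty e^{-cm\,d(e,\partial Q_L)}+(m\beta)^{-1/2},
\]
which is what the Gaussian tail argument actually requires.
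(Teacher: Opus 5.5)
\textbf{Verdict: the target is right and the proof is correct in outline, but the step controlling $S^{-1}$ is under-justified.}

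\textbf{The statement as written is false, and you are right to say so.} Write $P=R^{-1}=\beta(mI+{\sf d}^\ast{\sf d})$ for the precision matrix. Given the boundary values, $X^\circ$ has covariance $T-QS^{-1}Q^T=P_{\circ\circ}^{-1}$, whose spectrum lies in $[\frac{1}{\beta(m+C_d)},\frac{1}{\beta m}]$. Hence $\bE|X_{i,e}|\gtrsim\beta^{-1/2}$ at every interior edge, whatever $\boldsymbol\eta$ is. So the bound on $\bE\|X_e\|$ fails, for example when $\boldsymbol\eta=0$. The paper's proof in fact only bounds $|\bE X(e)|$ (its first line). Only that bound is used in the proof of Proposition~\ref{prop:YMH_is_close_to_Proca}. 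So the conditional-mean bound is the right target. Your starting point, $QS^{-1}\boldsymbol\eta_1$ together with the decay of $Q$ from Lemma~\ref{lemma:lattice_Proca_is_massive}, is the same as the paper's.

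\textbf{Where the routes differ: controlling $S^{-1}$.} Here you are more careful than the paper. The paper claims each row of $S^{-1}$ has $O(1)$ nonzero entries of size $O(\beta)$. That confuses $S^{-1}$ with the block $P_{\partial\partial}$ of the precision matrix. In fact $S^{-1}=P_{\partial\partial}-P_{\partial\circ}P_{\circ\circ}^{-1}P_{\circ\partial}$, which is dense. Your replacement, exponential off-diagonal decay of $S^{-1}$ with entries $O(\beta)$ uniformly in $L$, is true, and your convolution estimate then finishes the argument.

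\textbf{The gap.} Your justification does not work as stated. The Neumann series in Lemma~\ref{lemma:lattice_Proca_is_massive} relies on exact bandedness: the $k$-th power vanishes at $(e,e')$ whenever $k<c\,\mathrm{dist}(e,e')$. Your $S$ is a block of the covariance and is not banded, so "banded up to exponentially small tails" needs an extra step. One way to supply it:
\begin{itemize}
\item Set $A=I-\beta mS$. Then $\|A\|_{\mathrm{op}}\le\rho:=C_d/(m+C_d)<1$ and $S^{-1}=\beta m\sum_k A^k$.
\item A weighted Schur bound gives $|A^k(e,e')|\le M^k e^{-\delta\,\mathrm{dist}(e,e')}$ with $M>1$ and $\delta>0$ independent of $L$.
\item Split the sum at $k\approx\delta\,\mathrm{dist}(e,e')/(2\log M)$. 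Use the weighted bound below the cutoff and $\rho^k$ above it. This gives exponential decay; it is a special case of Jaffard's theorem.
\end{itemize}

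\textbf{A cleaner fix.} Avoid $S^{-1}$ altogether and write the conditional mean as $-P_{\circ\circ}^{-1}P_{\circ\partial}\boldsymbol\eta_1$.
\begin{itemize}
\item $P_{\circ\partial}$ is nonzero only between edges sharing a plaquette, with entries of size at most $\beta$.
\item $P_{\circ\circ}$ is banded with spectrum in $[\beta m,\beta(m+C_d)]$, by the Rayleigh quotient.
\item So the banded Neumann series applies verbatim and gives $|P_{\circ\circ}^{-1}(e,e')|\le C\beta^{-1}e^{-c\min\{m,1\}\mathrm{dist}(e,e')}$.
\item Summing over the interior edges adjacent to $\partial Q_L$ gives $|\bE X(e)|\le C\|\boldsymbol\eta\|_\infty e^{-c\,d(e,\partial Q_L)}$, with constants independent of $L$ and $\beta$.
\end{itemize}
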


\begin{proof}
    Since $||X_e|| \leq n\sup_{i=1, \dots, n} |X_{i,e}|$, we again reduce to the case of $i=1$ and abuse notation by writing $X_{1,e}$ as $X(e)$, and we write $\tilde{\boldsymbol{\eta}}$ for the boundary condition induced on $(X(e))_{e \in \partial Q_L}$ by $\boldsymbol{\eta}$. From Gaussian conditioning, we know that
    \begin{align*}
        |\bE[X(e)]| &= \left |\sum_{e_1, e_2 \in \partial E} Q(e, e_1)(S^{-1})_{e_1, e_2} \tilde{\boldsymbol{\eta}}_{e_2} \right | \\
        &\leq \sum_{e_1 \in \partial E} \left(|Q(e, {e_1})|  \sum_{e_2 \in \partial E}|(S^{-1})_{e_1, e_2} \tilde{\boldsymbol{\eta}}_{e_2}| \right).
    \end{align*}
    Now, note that $R^{-1}$ is given by the Proca density in Definition \ref{def:g_valued_proca_field}, and each row thus contains at most $C$ nonzero entries, where $C$ only depends on the lattice dimension $d$. This is because any given edge in $\bZ^d$ is contained in $2(d-1)$ plaquettes, each with 3 other edges (so we can take, for example, $C = 6(d-1)$). Hence each row of $S^{-1}$ only contains at most $C$ nonzero entries and each entry of $S^{-1}$ is proportional to $\beta$, so the above is bounded by
    \begin{equation*}
        C\beta||\boldsymbol{ \eta}||_{\infty}\sum_{e_1 \in \partial E} |Q(e,{e_1})|,
    \end{equation*}
    where $||\boldsymbol{\eta}||_{\infty} = \sup_{e \in \partial E} ||\eta_e||$. Then, by the exponential decay of correlations of the massive Proca field as in Lemma \ref{lemma:lattice_Proca_is_massive}, we have (with $C$ and $c$ are constants depending only on $d$, $n$, and $m$ possibly changing between lines)
    \begin{align*}
        C\beta||\boldsymbol{\eta}||_{\infty}\sum_{e_1 \in \partial E} |q(e)_{e_1}| &\leq C||\boldsymbol{\eta}||_{\infty} \sum_{k=d(e,\partial Q_L)}^{2dL}\sum_{\substack{e_1 \in \partial E \\ d(e_1, e) = k}}e^{-cmk} \\
        &\leq C||\boldsymbol{\eta}||_{\infty}\sum_{k=d(e, \partial Q_L)}^{2dL} (3k)^de^{-cmk} \\
        &\leq C||\boldsymbol{\eta}||_{\infty}e^{-cmd(e,\partial Q_L)},
    \end{align*}
    where $C$ only depends on the lattice dimension $d$.
\end{proof}

\vspace{2cm}

\end{document}